\newcommand{\N}{\mathbb{N}}                     
\newcommand{\Z}{\mathbb{Z}}                     
\newcommand{\R}{\mathbb{R}}                     
\newcommand{\C}{\mathbb{C}}                     
\newcommand{\T}{\mathbb{T}}                     
\newcommand{\set}[2]{\left\{{#1}\mid{#2}\right\}}       
\newcommand{\im}{\mathrm{Im\,}}                 
\newcommand{\re}{\mathrm{Re\,}}                 
\newcommand{\Ker}{\mathrm{Ker\,}}               
\newcommand{\coker}{\mathrm{coker\,}}           
\newcommand{\sgn}{\mathrm{sgn\,}}               
\newcommand{\ind}{\mathrm{ind\,}}               
\newcommand{\codim}{\mathrm{codim}}           
\newcommand{\crit}{\mathrm{crit}}		
\newcommand{\Det}{\mathrm{Det}}                 
\newcommand{\delbar}{\overline{\partial}_J\,}  
\DeclareMathOperator*{\esssup}{ess\,sup}
\newtheorem{thm}{\bf Theorem}[section]      
\newtheorem*{thm*}{\bf Theorem}			
\newtheorem{cor}[thm]{\bf Corollary}        
\newtheorem{lem}[thm]{\bf Lemma}            
\newtheorem{prop}[thm]{\bf Proposition}     
\newtheorem{defn}[thm]{\bf Definition}      
\newtheorem{rem}[thm]{\bf Remark}	    
\title{The role of the Legendre transform in the study of the Floer complex of cotangent bundles} 
\author{Alberto Abbondandolo and Matthias Schwarz}
\date{June 18, 2013}
\begin{document}

\maketitle

\begin{abstract}
Consider a classical Hamiltonian $H$ on the cotangent bundle $T^*M$ of a closed orientable manifold $M$, and let $L:TM \rightarrow \R$ be its Legendre-dual Lagrangian. In a previous paper we constructed an isomorphism $\Phi$ from the Morse complex of the Lagrangian action functional which is associated to $L$ to the Floer complex which is determined by $H$. In this paper we give an explicit construction of a homotopy inverse $\Psi$ of $\Phi$. Contrary to other previously defined maps going in the same direction, $\Psi$ is an isomorphism at the chain level and preserves the action filtration. Its definition is based on counting Floer trajectories on the negative half-cylinder which on the boundary satisfy ``half'' of the Hamilton equations. Albeit not of Lagrangian type, such a boundary condition defines Fredholm operators with good compactness properties.
We also present a heuristic argument which, independently on any Fredholm and compactness analysis, explains why the spaces of maps which are used in the definition of $\Phi$ and $\Psi$ are the natural ones. The Legendre transform plays a crucial role both in our rigorous and in our heuristic arguments. We treat with some detail the delicate issue of orientations and show that the homology of the Floer complex is isomorphic to the singular homology of the loop space of $M$ with a system of local coefficients, which is defined by the  pull-back of the second Stiefel-Whitney class of $TM$ on 2-tori in $M$. 
\end{abstract}

\renewcommand{\theenumi}{\roman{enumi}}
\renewcommand{\labelenumi}{(\theenumi)}

\section*{Introduction}

This paper is the natural continuation of our previous paper \cite{as06}. Let $M$ be a closed manifold, which for sake of simplicity we assume to be orientable. The cotangent bundle $T^*M$ of $M$ carries a canonical symplectic structure $\omega$.  A time-periodic $\omega$-compatible almost complex structure $J$ on $(T^*M,\omega)$ and a time-periodic Hamiltonian $H\in C^{\infty}(\T \times T^*M)$ define a chain complex, which is called the Floer complex of $H$. 
Here the Hamiltonian should be non-degenerate, meaning that all the one-periodic orbits of the corresponding Hamiltonian vector field $X_H$ are non-degenerate as fixed points of the time-one flow. The set of such periodic orbits is denoted by $\mathscr{P}(H)$. Moreover, the almost complex structure $J$ should be generic and both $H$ and $J$ should have a suitable behavior at infinity. For instance, $H$ should be quadratic at infinity (see Definition \ref{qai} below) and $J$ should be $C^0$-close to a Levi-Civita almost complex structure. Alternatively, $H$ should be fiberwise radial and superlinear and $J$ should be of contact-type outside of a compact set.

Under these assumptions, one defines $F_*(H)$ to be the free Abelian group which is generated by the elements of $\mathscr{P}(H)$, graded by the Conley-Zehnder index
\[
\mu_{CZ} :  \mathscr{P}(H) \rightarrow \Z.
\]
The boundary operator of the Floer chain complex
\[
\partial^F : F_*(H) \rightarrow F_{*-1}(H)
\]
is defined by counting solutions $u\in C^{\infty}(\T\times \R,T^*M)$ of the perturbed Cauchy-Riemann equation
\begin{equation}
\label{floereq}
\partial_s u + J(t,u) \bigl( \partial_t u - X_H(t,u) \bigr) = 0, \qquad \forall (s,t)\in \T\times \R,
\end{equation}
which have finite energy
\[
E(u) = \int_{\R \times \T} |\partial_s u|^2\, ds\, dt,
\]
and are asymptotic to pairs of periodic orbits for $s\rightarrow \pm \infty$. The homology of such a complex does not depend on the choice of $(H,J)$ and it can be therefore called the {\em Floer homology of $T^*M$} and denoted by $HF_*(T^*M)$. 

This construction is originally due to A.\ Floer for closed symplectic manifolds (see \cite{flo88b,flo88a,flo88d,flo89b,flo89a}). For cotangent bundles, it can be seen as a particular case of symplectic homology for Liouville domains (see \cite{fh94,cfh95,vit99,oan04,sei06b,bo09b}). More precisely, $HF_*(T^*M)$ can be identified with the symplectic homology of the unit cotangent disk bundle $D^*M$. 

In our previous paper \cite{as06} we considered Hamiltonians which are quadratic at infinity and uniformly fiberwise convex (see condition (\ref{uncon}) below), and we constructed an explicit chain isomorphism 
\[
\Phi: M_*(\mathbb{S}) \rightarrow F_*(H)
\]
from the Morse complex $M_*(\mathbb{S})$ which is associated to the Lagrangian action functional
\[
\mathbb{S}(q) := \int_{\T} L\bigl(t,q(t),q'(t)\bigr)\, dt, \qquad \forall q\in H^1(\T,M),
\]
where $L\in C^{\infty}(\T\times TM)$ is the Lagrangian which is Legendre-dual to $H$, to the Floer complex $F_*(H)$. Here $H^1(\T,M)$ is the Hilbert manifold of free loops in $M$ of Sobolev class $H^1$ and the Morse complex of $\mathbb{S}$ is well-defined because $\mathbb{S}$ admits a smooth Morse-Smale negative gradient flow whose unstable manifolds are finite dimensional and which satisfies the Palais-Smale compactness condition (see \cite{as09}). Since the Morse homology of $\mathbb{S}$ is isomorphic to the singular homology of $H^1(\T,M)$, hence also to the singular homology of the free loop space $\Lambda M:= C^0(\T,M)$, the above result implies that the Floer homology of the cotangent bundle of $M$ is isomorphic to the singular homology of $\Lambda M$, a theorem which had been previously proven by C.\ Viterbo in \cite{vit03} and by D.\ Salamon and J.\ Weber in \cite{sw06} using generating functions and the heat flow for loops on manifolds, respectively (see also \cite{web05} for a comparison of the three approaches).

Actually, there is a subtle issue involving orientations which was overlooked in \cite{as06} and later corrected in \cite{as13}. This issue was highlighted by T.~Kragh in \cite{kra07}, and clarified in \cite{abo11}, \cite{kra11} and \cite{as13}. Namely, if one defines the boundary operator of the Floer complex using standard conventions, the Floer complex of $H$ is isomorphic to the Morse complex of $\mathbb{S}$ {\em with a system of local coefficients on $H^1(\T,M)$}. Such a system of local coefficients is induced by the following $\Z$-representation $\rho$ of the fundamental group of $\Lambda M$: if every continuous loop $\gamma:\T \rightarrow \Lambda M$ is identified with a continuous map $\tilde{\gamma}: \T^2 \rightarrow M$, then the representation $\rho$ is defined as
\[
\rho: \pi_1(\Lambda M) \rightarrow \mathrm{Aut}(\Z), \qquad \rho [\gamma] := \left\{ \begin{array}{cl} \mathrm{id} & \mbox{if } w_2(\tilde{\gamma}^* (TM)) = 0, \\ -\mathrm{id} & \mbox{if } w_2(\tilde{\gamma}^* (TM)) \neq 0, \end{array} \right.
\]
where $w_2$ denotes the second Stiefel-Whitney class. In particular, the standard Floer homology of $T^*M$ is isomorphic to the singular homology of $\Lambda M$ with the above system of local coefficients. This local system is trivial - and hence the homology of $\Lambda M$ is the standard one - when the second Whitney class of the oriented manifold $M$ vanishes on tori, and in particular when $M$ is spin. It is also possible to define a twisted version of the Floer complex of $H$ which is isomorphic to the standard Morse complex of $\mathbb{S}$ (this is the approach adopted in \cite{as13}, see also Remark \ref{twistrem} below). Here we prefer to work with the standard Floer complex and to use local coefficients on the loop space. In Sections \ref{linorsec}, \ref{defcompsec} and \ref{defisosec} below we treat orientations and local coefficients with some detail: in doing this, we follow A.~Floer's and H.~Hofer's approach from \cite{fh93}, but we adopt the point of view of P.~Seidel \cite[Sections II.11 and II.12]{sei08b} and M.~Abouzaid \cite{abo11}  of including orientations in the definition of the generators of the Floer and Morse complexes. In the remaining part of this introduction we prefer to ignore these orientation issues and we use periodic orbits and critical points as generators of the Floer and of the Morse complex.

The definition of the chain isomorphism $\Phi$ is based on the study of the following spaces of maps on the positive half-cylinder:
\begin{eqnarray*}
\mathscr{M}_{\Phi} (q,x) := \Bigl\{ u: \R^+ \times \T \rightarrow T^*M  & \Big| &  u \mbox{ is a finite energy solution of (\ref{floereq}), } \pi\circ u(0,\cdot) \in W^u(q;G_{\mathbb{S}})\\ && \mbox{and } 
\lim_{s\rightarrow +\infty} u(s,\cdot)=x\Bigr\},
\end{eqnarray*}
where $q$ belongs to $\mathrm{crit} \, \mathbb{S}$, the set of critical points of $\mathbb{S}$, and $x$ is in $\mathscr{P}(H)$. Here $\pi:T^*M \rightarrow M$ is the bundle projection and $W^u(q;G_{\mathbb{S}})$ is the unstable manifold of the critical point $q$ with respect to the negative pseudo-gradient vector field $G_{\mathbb{S}}$ which defines the Morse complex of $\mathbb{S}$ (hence $W^u(q;G_{\mathbb{S}})$ is a finite dimensional manifold of $M$-valued loops). The isomorphism $\Phi$ preserves the action filtrations of the two chain complexes, which on the Morse side is given by the Lagrangian action functional $\mathbb{S}$ and on the Floer side is given by the Hamiltonian action functional
\[
\mathbb{A}(x) := \int_{\T} x^*(\lambda) - \int_{\T} H(t,x(t))\, dt, \quad \forall x\in C^{\infty}(\T,T^*M),
\]
where $\lambda$ denotes the standard Liouville form on $T^*M$.
This construction turns out to be quite flexible and has been applied by several authors to various situations, both in symplectic geometry and in gauge theory (see \cite{aps08,as09,abo12,rit09,ase10,jan10,mer10,mer11,fmp12}). 

From the beginning, a natural question has been how to define a chain isomorphism going in the opposite direction
\[
\Psi : F_*(H) \rightarrow M_*(\mathbb{S}).
\]
Of course $\Phi^{-1}$ is such an isomorphism, but one would like to have a definition of $\Psi$ which is based on counting spaces of maps. The naif candidate which is obtained by replacing the positive half-cylinder by the negative one and the unstable manifolds by the stable manifolds obviously does not work: one one hand, stable manifolds are infinite dimensional, so one would not get a finite dimensional space of maps, on the other hand the crucial inequality
\begin{equation}
\label{fenineq}
\mathbb{A}(x) \leq \mathbb{S}(\pi\circ x), \qquad \forall x\in C^{\infty}(\T,T^*M),
\end{equation}
which follows from Legendre duality and allows us to get the energy estimates which lead to good compactness properties of the spaces $\mathscr{M}_{\Phi}(q,x)$, would be of no use here.

In the literature there is a construction of a chain map 
\[
\tilde{\Psi} :  F_*(H) \rightarrow M_*(\mathbb{S}),
\]
which at the level of homology induces an inverse of $\Phi_*$. Its definition  is based on the study of the space of maps
\begin{eqnarray*}
\mathscr{M}_{\tilde{\Psi}} (x,q) := \Bigl\{ u: \R^- \times \T \rightarrow T^*M  & \Big| &  u \mbox{ is a finite-energy solution of (\ref{floereq}), } \lim_{s\rightarrow -\infty} u(s,\cdot)=x, \\ && u(0,t)\in \mathbb{O}_{T^*M} \mbox{ and } 
\pi\circ u(0,\cdot) \in W^s(q;G_{\mathbb{S}})\Bigr\},
\end{eqnarray*}
where $\mathbb{O}_{T^*M}$ denotes the zero-section of $T^*M$. This construction has been used in \cite{cl09,abo11,as12}. However $\tilde{\Psi}$ need not be an isomorphism at the chain level and it does not preserve the action filtrations of the two complexes.

The first aim of this paper is to define a map $\Psi:F_*(H) \rightarrow M_*(\mathbb{S})$ which is a chain isomorphism, preserves the action filtrations and is a chain homotopy inverse of $\Phi$. The definition of $\Psi$ is based on a space of maps which we call the {\em reduced unstable manifold} of the periodic orbit $x\in \mathscr{P}(H)$:
\begin{eqnarray*}
\mathscr{U}(x) := \Bigl\{ u: \R^- \times \T \rightarrow T^*M  & \Big| &  u \mbox{ is a finite-energy solution of (\ref{floereq}), } \lim_{s\rightarrow -\infty} u(s,\cdot)=x \\ &&\mbox{and } \partial_t (\pi \circ u) (0,t) = d_p H\bigl(t,u(0,t)\bigr) \; \forall t\in \T\Bigr\}.
\end{eqnarray*}
The name {\em unstable manifold} is motivated by the fact that (\ref{floereq}) is the $L^2$-negative gradient equation of the Hamiltonian action functional $\mathbb{A}$  and by the asymptotic condition for $s\rightarrow -\infty$. The adjective {\em reduced} refers to the fact that we are adding to these conditions, which would define an infinite dimensional object, the boundary condition
\begin{equation}
\label{bdryzero}
\partial_t (\pi \circ u) (0,t) = d_p H\bigl(t,u(0,t)\bigr), \qquad \forall t\in \T,
\end{equation}
which says that the loop $u(0,\cdot)$ solves ``half of the Hamiltonian equations'' (here $d_p H(t,x)$ denotes the fiberwise differential of $H(t,\cdot)$ at $x$, which is an element of $(T_{\pi(x)}^* M )^* = T_{\pi(x)} M$, and can therefore be  compared to the derivative of the loop $\pi\circ x:\T \rightarrow M$). Notice however that the set $\mathscr{U}(x)$ is not invariant with respect to the semi-flow
\[
(\sigma,u) \mapsto u(\cdot - \sigma,\cdot), \qquad \sigma\in \R^+,
\]
a property which one would instead expect from a true unstable manifold, and that its topology could be arbitrarily complicated.

Unlike those appearing in the definition of the spaces $\mathscr{M}_{\Phi}(q,x)$ and $\mathscr{M}_{\tilde{\Psi}}(x,q)$, the boundary condition (\ref{bdry}) is not of Lagrangian type.
In the first section of this paper we develop the linear Fredholm analysis which allows to see $\mathscr{U}(x)$ as the set of zeroes of a Fredholm section of a suitable Hilbert bundle. This linear analysis is ultimately based on the elementary identity of Lemma \ref{baside}. Thanks to this linear analysis and to the standard regularity theory for the Cauchy-Riemann operator, we can prove that if $J$ is generic and 
\begin{equation}
\label{positivo}
d_{pp} H\bigl(t,x(t)\bigr) >0 , \qquad \forall t\in \T, 
\end{equation}
then $\mathscr{U}(x)$ is a non-empty smooth orientable manifold of dimension $\mu_{CZ}(x)$ (notice that by (\ref{positivo}), the Conley-Zehnder index $\mu_{CZ}(x)$ is non-negative). The stationary solution $u_0(s,t):=x(t)$ always belongs to $\mathscr{U}(x)$, so transversality is impossible to achieve at $u_0$ when $\mu_{CZ}(x)$ is negative. Instead, when (\ref{positivo}) holds, there is an automatic transversality argument which shows that for every choice of $J$ the stationary solution $u_0$ is a regular point of the Fredholm section whose set of zeroes is $\mathscr{U}(x)$. Interestingly, such an argument is again based on the Legendre transform, which by (\ref{positivo}) can be defined locally around $x$ (see Lemma \ref{auttrans} below).

The next step is to show that that, under the above mentioned  asymptotic quadraticity assumption on $H$ (see Definition \ref{qai}) and assuming $J$ to be $C^0$-close to a Levi-Civita almost complex structure, all the maps in $\mathscr{U}(x)$ take values in a compact subset of $T^*M$ (see Proposition \ref{komp1} below). This fact follows from the energy estimate coming from the boundary condition (\ref{bdryzero}) and from an argument in \cite{as06} which involves elliptic estimates. Although in this paper we mainly deal with the above mentioned set of assumptions on $H$ and $J$, we also prove the analogous compactness statement under the assumptions which are more common in symplectic homology (see Proposition \ref{komp2} below). The proof of the latter result uses the maximum principle and the Hopf Lemma. Standard arguments involving bubbling-off of $J$-holomorhic spheres and disks and an elliptic bootstrap imply that under both set of assumptions the reduced unstable manifold $\mathscr{U}(x)$ is relatively compact in $C^{\infty}_{\mathrm{loc}} (\R^- \times \T,\T^*M)$.

Assume now that $H$ is both quadratic at infinity and uniformly fiberwise convex and let $L$ be its Legendre-dual Lagrangian. Given $x\in \mathscr{P}(H)$ and  $q\in \mathrm{crit}\, \mathbb{S}$, we consider the set of maps
\[
\mathscr{M}_{\Psi}(x,q) := \mathscr{Q}_x^{-1} \bigl(W^s(q;G_{\mathbb{S}}) \bigr),
\]
where $\mathscr{Q}_x$ is the map 
\[
\mathscr{Q}_x : \mathscr{U}(x) \rightarrow H^1(\T,T^*M), \qquad \mathscr{Q}_x(u) = \pi\circ u(0,\cdot).
\]
In other words, $\mathscr{M}_{\Psi}(x,q)$ is the set of elements $u$ of the reduced unstable manifold of $x$ such that $\pi\circ u(0,\cdot)$ belongs to the stable manifold of $q$ with respect to the negative pseudo-gradient flow of $G_{\mathbb{S}}$. Up to a generic perturbation of $G_{\mathbb{S}}$, the map $\mathscr{Q}_x$ is transverse to the stable manifold $W^s(q;G_{\mathbb{S}})$, so $\mathscr{M}_{\Psi}(x,q)$ is a manifold of dimension $\mu_{CZ}(x)-\mathrm{ind}\, (q;\mathbb{S})$, where $\mathrm{ind}\, (q;\mathbb{S})$ denotes the Morse index of the critical point $q$. Moreover, an orientation of $\mathscr{U}(x)$ and a co-orientation of $W^s(q;\mathbb{S})$ (that is, an orientation of its normal bundle in $H^1(\T,M)$) induce an orientation of $\mathscr{M}_{\Psi}(x,q)$. 

These facts imply that, in the case $\mu_{CZ}(x) = \mathrm{ind}\, (q;\mathbb{S})$, $\mathscr{M}_{\Psi}(x,q)$ is a compact oriented zero-dimensional manifold, that is a finite set of points each of which carries an orientation sign. The algebraic sum of this signs defines the coefficient $n_{\Psi}(x,q)$ of the chain map
\begin{equation}
\label{defpsi0}
\Psi: F_*(H) \rightarrow M_*(\mathbb{S}), \qquad \Psi x := \sum_{\substack{q\in \crit\, \mathbb{S} \\ \ind(q;\mathbb{S}) = \mu_{CZ}(x)}} n_{\Psi}(x,q)\, q, \qquad \forall x\in \mathscr{P}(H).
\end{equation}
Again by Legendre duality, the equality holds in (\ref{fenineq}) if and only if the loop $x:\T \rightarrow T^*M$ solves ``half of the Hamiltonian equations'',
\[
x'(t) = d_p H(t,x(t)), \qquad \forall t\in \T.
\]
This fact implies that $\Psi$ has the form
\[
\Psi x = \epsilon(x) \pi\circ x + \sum_{\mathbb{S}(q)< \mathbb{A}(x)} n_{\Psi}(x,q) q, \qquad \forall x\in \mathscr{P}(H),
\]
where $\epsilon(x)$ is either $1$ or $-1$. This leads to the following theorem, which is the main result of this paper:

\begin{thm*}
Let $M$ be a closed orientable manifold.
Assume that $L$ is the Lagrangian which is Legendre-dual to the fiberwise uniformly convex and quadratic at infinity non-degenerate Hamiltonian $H\in C^{\infty}(\T\times T^* M)$. Then:
\begin{enumerate} 
\item The formula (\ref{defpsi0}) defines a chain isomorphism $\Psi$ from the Floer complex of $H$ to the Morse complex of the Lagrangian action functional $\mathbb{S}$ associated to  $L$. Such an isomorphism preserves the action filtrations and the splittings of $F_*(H)$ and $M_*(\mathbb{S})$ determined by the free homotopy classes of the generators.
\item The chain isomorphisms $\Phi$ and $\Psi$ are homotopy inverses one of the other through chain homoopies which preserve the action filtrations and the splitting of the Floer and the Morse complexes determined by the free homotopy classes of the generators.
\end{enumerate}
\end{thm*}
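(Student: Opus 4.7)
The plan is to prove the two parts of the theorem in sequence, first establishing part (i) via three independent properties of $\Psi$ and then deriving the chain homotopy equivalence in part (ii) from suitable mixed moduli spaces.

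For part (i), the fact that $\Psi$ is a well-defined chain map is the standard codimension-one boundary analysis: when $\mu_{CZ}(x) - \ind(q;\mathbb{S}) = 1$, I would consider the 1-dimensional manifold $\mathscr{M}_{\Psi}(x,q)$, apply the relative compactness in $C^\infty_{\mathrm{loc}}$ of $\mathscr{U}(x)$ (Propositions \ref{komp1}/\ref{komp2}) together with the finite-dimensional Morse-theoretic control coming from intersection with $W^s(q;G_{\mathbb{S}})$, and identify its boundary with the two expected strata: (a) breaking of the Floer cylinder at $-\infty$ into a Floer trajectory $x \rightsquigarrow y$ followed by an element of $\mathscr{M}_{\Psi}(y,q)$, contributing $\Psi(\partial^F x)$; and (b) breaking of the pseudo-gradient flow near $q$, contributing $\partial^M(\Psi x)$. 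Standard gluing, with the Fredholm theory developed in Section 1 and the automatic transversality at stationary solutions (Lemma \ref{auttrans}), shows these are genuine and complete. The action-filtration preservation follows from the Legendre inequality (\ref{fenineq}), which along any $u \in \mathscr{U}(x)$ is saturated at $s=0$ precisely because of the boundary condition (\ref{bdryzero}):
\[
\mathbb{S}(q) \leq \mathbb{S}(\pi\circ u(0,\cdot)) = \mathbb{A}(u(0,\cdot)) \leq \mathbb{A}(x),
\]
the last step using that $\mathbb{A}$ decreases along Floer trajectories. Preservation of the free homotopy splitting is immediate: the cylinder $u$ followed by the gradient trajectory in $W^s(q;G_{\mathbb{S}})$ provides a continuous homotopy in $M$ between $\pi\circ x$ and $q$.

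The chain isomorphism property then follows from a lower-triangularity argument. By Legendre duality, each $x \in \mathscr{P}(H)$ determines a critical point $\pi\circ x \in \crit\,\mathbb{S}$ with $\mathbb{A}(x) = \mathbb{S}(\pi\circ x)$ and with matching index, and these are precisely the generators for which equality holds in (\ref{fenineq}). Combined with the two inequalities above, this forces any contribution $q \neq \pi\circ x$ in $\Psi x$ to satisfy $\mathbb{S}(q) < \mathbb{A}(x)$, yielding the decomposition
\[
\Psi x = \epsilon(x)\, \pi\circ x + \sum_{\mathbb{S}(q) < \mathbb{A}(x)} n_{\Psi}(x,q)\, q, \qquad \epsilon(x) \in \{\pm 1\}.
\]
Ordering generators within each fixed degree and each fixed free homotopy class by action, the matrix of $\Psi$ is lower triangular with $\pm 1$ diagonal; since each action sublevel contains only finitely many generators (non-degeneracy of $H$ and the Palais-Smale condition for $\mathbb{S}$), the matrix is invertible on each sublevel, so $\Psi$ is a filtered chain isomorphism. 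The computation of $\epsilon(x)$ from the automatic transversality at the stationary solution $u_0(s,t) = x(t)$ identifies the sign with the orientation conventions, which is where the linear orientation analysis of Section \ref{linorsec} is used.

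For part (ii), the natural strategy is to construct two chain homotopies via one-parameter families of mixed moduli spaces. To show $\Psi\circ\Phi \simeq \mathrm{id}_{M_*(\mathbb{S})}$, I would consider, for a parameter $R \geq 0$, the space of pairs consisting of a negative half-cylinder solving (\ref{floereq}) with the reduced unstable boundary condition (\ref{bdryzero}) and a positive half-cylinder with the unstable-manifold boundary condition of $\Phi$, glued along a Floer trajectory of length $R$. As $R \to +\infty$ one sees $\Psi\circ\Phi$, while as $R \to 0$ the two half-cylinders join into a single cylinder with a combined boundary condition whose solution set reduces, by the automatic transversality argument of Lemma \ref{auttrans}, to constant cylinders at critical points of $\mathbb{S}$ paired with intersection of $W^u$ and $W^s$; this recovers the identity. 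The symmetric construction handles $\Phi\circ\Psi$. Because both ingredients preserve action filtrations and homotopy classes, the resulting chain homotopies inherit these properties from the same Legendre and continuity arguments used in part (i).

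The main obstacle is this last step: although the Fredholm theory for $\mathscr{U}(x)$ handled in Section 1 covers the non-Lagrangian boundary condition (\ref{bdryzero}) on a single negative half-cylinder, extending it to the $R$-parametrized mixed moduli spaces — in particular analyzing the $R \to 0$ degeneration, checking that no unwanted breaking occurs at the interior gluing, and producing coherent orientations on the $R$-families — is where the novel analysis lies. The other components are either direct applications of the linear and compactness results already established or algebraic consequences of Legendre duality.
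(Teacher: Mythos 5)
Your treatment of part (i) matches the paper's approach: the chain-map property via the cobordism/breaking analysis, preservation of the action filtration from the Legendre saturation $\mathbb{A}(u(0,\cdot)) = \mathbb{S}(\pi\circ u(0,\cdot))$ forced by (\ref{bdryzero}), the strictly lower-triangular decomposition with $\pm 1$ on the diagonal, and the use of automatic transversality (Lemma \ref{auttrans}) at the stationary solution are all exactly what the paper does.

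For part (ii), however, there is a concrete gap in your last sentence, and it is precisely the subtlety that Remark \ref{invchom0} of the paper is designed to warn against. You propose to prove $\Phi\Psi \simeq \mathrm{id}_{F_*(H)}$ by "the symmetric construction," i.e.\ pairs $(u,v)$ with $u\in\mathscr{U}(x)$ (negative half-cylinder with the reduced boundary condition), $v$ a positive $\Phi$-type half-cylinder asymptotic to $y$, coupled by a $G_{\mathbb{S}}$-flow of length $\sigma$. The $\sigma\to 0$ limit of this space does \emph{not} collapse to the identity: the limit moduli space counts pairs $(u,v)$ with $\pi\circ v(0,\cdot)=\pi\circ u(0,\cdot)$, and nothing forces such a configuration to be stationary when $\mu_{CZ}(y)=\mu_{CZ}(x)$ but $\mathbb{S}(\pi\circ y)<\mathbb{S}(\pi\circ x)$. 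One only obtains a chain homotopy from $\Phi\Psi$ to \emph{some} filtration-preserving chain isomorphism, and an extra argument would still be required. The paper sidesteps this entirely: since $\Phi$ and $\Psi$ are chain isomorphisms (from part (i) and from \cite{as06}), it suffices to establish a single chain homotopy $\Psi\Phi\simeq\mathrm{id}$ on the Morse side, and the other direction follows by conjugation. The moduli space the paper uses for this single homotopy is the set $\mathscr{M}_P(q^-,q^+)$ of pairs $(\sigma,u)$ with $u:[0,\sigma]\times\T\to T^*M$ solving (\ref{cr}) and boundary conditions $\pi\circ u(0,\cdot)\in W^u(q^-;G_{\mathbb{S}})$ at $s=0$ and the reduced condition plus $\pi\circ u(\sigma,\cdot)\in W^s(q^+;G_{\mathbb{S}})$ at $s=\sigma$, which agrees in spirit with your $\Psi\Phi$ picture. (If you really want a direct chain homotopy for $\Phi\Psi$, Remark \ref{invchom} shows the correct space: there the negative half-cylinder $u$ must \emph{not} carry the reduced boundary condition; instead the two half-cylinders are coupled through the pseudo-gradient flow $\phi_{G_{\mathbb{A}}}$ on the space of all loops in $T^*M$, not through the reduced unstable manifold.) A smaller point: in the $\sigma\to 0$ analysis, the identification of the limit with the identity operator comes from the Morse--Smale property and index equality (a flow line between critical points of equal index must be constant), not from the automatic transversality of Lemma \ref{auttrans}.
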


We also remark that the arguments of \cite{as10} could be used to show that $\Psi$ induces an isomorphism in homology which intertwines the pair-of-pants product on $HF_*(T^*M)$ with the Morse-theoretical representation of the Chas-Sullivan loop product. Furthermore, the construction of $\Psi$ can be easily generalized to treat more general boundary conditions than the periodic one, see \cite{aps08}. 

The second aim of this paper is to present a heuristic argument which, independently on any Fredholm or compactness analysis, explains why 
$\mathscr{M}_{\Phi}(q,x)$ and $\mathscr{M}_{\Psi}(x,q)$ are the natural spaces of maps to be considered in order to define chain isomorphisms between the Floer complex of $H$ and the Morse complex of $\mathbb{S}$. We briefly describe this argument here, referring to Section \ref{heursec} for more details.

The starting point is again the Legendre transform, which induces the following diffeomorphism
\[
\mathcal{L} : C^{\infty}(\T,TM) \rightarrow C^{\infty}(\T,T^*M), \qquad \mathcal{L}(q,v) := \bigl( q, d_v L(\cdot,q,q'+v) \bigr),
\]
between the free loop spaces of $TM$ and $T^*M$. The Taylor formula with integral remainder implies the identity
\[
\mathbb{A}\bigl( \mathcal{L}(q,v) \bigr) = \mathbb{S}(q) - \mathbb{U}(q,v),
\]
where $\mathbb{U}$ is the function
\[
\mathbb{U} : C^{\infty}(\T,TM) \rightarrow \R, \qquad \mathbb{U}(q,v) :=  \int_{\T} \int_0^1 s\, d_{vv} L(t,q,q'+sv)[v]^2\, ds \, dt.
\]
In the particular case $L(t,q,v)=g_q(v,v)/2$, with $g$ a Riemannian metric on $M$, this formula has been used by M.\ Lipyanskiy in \cite{lip09}.
The function $\mathbb{U}$ has minimum 0, which is achieved at the loops which take values into the zero section $\mathbb{O}_{TM}$ of $TM$, and has no other critical point. It is therefore easy to construct a negative pseudo-gradient vector field $G_{\mathbb{A}\circ \mathcal{L}}$ for $\mathbb{A}\circ \mathcal{L}$ on $C^{\infty}(\T,TM)$ (or rather on a suitable Hilbert completion of this space, but we are ignoring this point here, see Section \ref{heursec} for a more precise discussion) whose Morse complex is precisely the Morse complex of $\mathbb{S}$: the space of loops  which take values into the zero section $\mathbb{O}_{TM}$ is invariant with respect to the corresponding flow and the orbit of every other loop tends  to such a space for $s\rightarrow -\infty$ and to infinity for $s\rightarrow +\infty$. By pushing this vector field by means of the map $\mathcal{L}$ we obtain a negative pseudo-gradient vector field $G_{\mathbb{A}}$ for $\mathbb{A}$ on  $C^{\infty}(\T,T^*M)$.

This means that on $C^{\infty}(\T,T^*M)$ we have two negative pseudo-gradient vector fields for $\mathbb{A}$: the first one is minus the $L^2$-gradient vector field
\[
- \nabla_{L^2} \mathbb{A} (u) = -J(t,u) \bigl( \partial_t u - X_H(t,u) \bigr),
\]
which does not induce a flow and whose associated chain complex is the Floer complex of $H$; the second one
is $G_{\mathbb{A}}$, which does induce a flow and whose Morse complex is precisely the Morse complex of $\mathbb{S}$. When a functional has two different negative pseudo-gradient vector fields $G_1$ and $G_2$, the natural way of defining a chain isomorphism between the two Morse complexes is to count the number of intersections between the unstable manifold with respect of $G_1$ and the stable manifold with respect to $G_2$ of pairs of critical points of the same index. Equivalently, one has to look at continuous curves $u$ which are asymptotic to the two critical points and satisfy
\[
u'(s) = G_1(u(s)) \quad \mbox{for } s\leq 0, \qquad u'(s) = G_2(u(s)) \quad \mbox{for } s\geq 0.
\]
If we apply this observation to the negative pseudo-gradient vector fields $G_{\mathbb{A}}$ and $- \nabla_{L^2} \mathbb{A}$, we find exactly the set $\mathscr{M}_{\Phi}(q,x)$ or $\mathscr{M}_{\Psi}(x,q)$, depending on which order we decide to follow. See Section \ref{heursec} for more details, and in particular Remark \ref{thom} for the interpretation of all this as an infinite dimensional version of the Thom isomorphism. 

\section{Linear results}

\paragraph{Sign conventions.} The sign convention we adopt here differ from those in \cite{as06} and they are more standard.
If $\omega$ is a symplectic form on the manifold $M$, the almost complex structure $J$ on $M$ is said to be $\omega$-compatible if the bilinear form
\[
g_J( \xi,\eta ) := \omega(J \xi,\eta), \qquad \forall \xi,\eta\in T_x M, \; \forall x\in M,
\]
is a Riemannian structure on $M$. The associated norm is denoted by
\[
|\xi|_J := \sqrt{g_J( \xi,\xi )}, \qquad \forall \xi\in TM.
\]
The vector space $T^* \R^n = \R^n \times (\R^n)^*$, whose elements are denoted by $(q,p)$, $q\in \R^n$, $p\in (\R^n)^*$, is endowed with the Liouville form
\[
\lambda_0 = p\, dq, \quad \mbox{that is } \lambda_0(q,p)[(\xi,\eta)] = \langle p, \xi \rangle, \quad \forall (q,p), (\xi,\eta) \in T^* \R^n,
\]
where $\langle\cdot,\cdot \rangle$ denotes the duality pairing. The exterior differential of $\lambda_0$ is the canonical symplectic form $\omega_0$ on $T^*M$,
\[
\omega_0 = d\lambda_0 = dp\wedge dq, \quad \mbox{that is } \omega_0 [ (\xi_1,\eta_1),(\xi_2,\eta_2) ] = \langle \eta_1,\xi_2 \rangle - \langle \eta_2,\xi_1 \rangle, \quad \forall (\xi_1,\eta_1), (\xi_2,\eta_2) \in T^* \R^n.
\]
The dual space $(\R^n)^*$ can be identified with $\R^n$ by the standard Euclidean product, and $T^*\R^n \cong \R^n \times \R^n=\R^{2n}$ is also endowed with the complex structure 
\[
J_0 (q,p) = (-p,q),
\]
which corresponds to the identification $\R^{2n} \cong \C^n$ given by $(q,p)\mapsto q+ip$. Such a complex structure $J_0$ is $\omega_0$-compatible and the corresponding scalar product is the standard Euclidean structure of $\R^{2n}$:
\[
g_{J_0}( \xi,\eta ) = \omega_0 (J_0 \xi,\eta) = \xi \cdot \eta, \qquad \forall \xi,\eta\in T^*\R^n \cong \R^{2n}.
\]

\paragraph{The basic identity.} Set $\R^-:=]-\infty,0]$ and denote the elements of the half-cylinder $\R^- \times \T$ as $(s,t)$. Given $J=J(s,t)$ a family of complex structures on $\R^{2n}$ parametrized on $\R^-\times \T$, denote by
\[
\delbar u := \partial_s  + J \partial_t
\] 
the corresponding Cauchy-Riemann operator on $\R^{2n}$-valued maps on the half-cylinder. We denote by $\mathrm{L}(\R^{2n})$ the vector space of linear endomorphisms of $\R^{2n}$.

\begin{lem}
\label{baside}
Let $J\in L^{\infty}(\R^- \times \T, \mathrm{L}(\R^{2n}))$ be such that $J(s,t)$ is an $\omega_0$-compatible complex structure on $\R^{2n}$, for a.e.\ $(s,t)\in \R^- \times \T$ . Then for every $u$ in the Sobolev space $H^1(\R^- \times \T,\R^{2n})$ there holds
\begin{equation}
\label{ide}
\int_{\R^-\times \T} \bigl( |\partial_s u|_J^2 + |\partial_t u|_J^2 \bigr)\, ds\,dt = \int_{\R^-\times \T} |\overline{\partial}_J u|_J^2 \, ds \,dt - 2 \int_{\T} u(0,\cdot)^* \lambda_0.
\end{equation}
\end{lem}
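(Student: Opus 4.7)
\medskip

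\noindent\textbf{Proof proposal.} The plan is to derive a pointwise algebraic identity by exploiting the $\omega_0$-compatibility of $J$, then integrate and convert the resulting cross-term into a boundary integral via Stokes' theorem.

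First, I would expand
\[
|\delbar u|_J^2 = g_J(\partial_s u + J\partial_t u,\,\partial_s u + J\partial_t u) = |\partial_s u|_J^2 + |J\partial_t u|_J^2 + 2\, g_J(\partial_s u, J\partial_t u).
\]
The symmetry of $g_J$ forces $\omega_0(J\xi,\eta) = \omega_0(J\eta,\xi)$, from which $\omega_0(J\xi,J\eta) = \omega_0(\xi,\eta)$ follows directly. Hence $g_J$ is $J$-invariant, so $|J\partial_t u|_J^2 = |\partial_t u|_J^2$, and
\[
g_J(\partial_s u, J\partial_t u) = \omega_0(J\partial_s u, J\partial_t u) = \omega_0(\partial_s u, \partial_t u).
\]
The pointwise identity
\[
|\delbar u|_J^2 = |\partial_s u|_J^2 + |\partial_t u|_J^2 + 2\,\omega_0(\partial_s u, \partial_t u)
\]
follows. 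Rearranging and integrating over $\R^- \times \T$, the claim reduces to
\[
\int_{\R^- \times \T} \omega_0(\partial_s u, \partial_t u)\, ds\, dt = \int_{\T} u(0,\cdot)^* \lambda_0.
\]

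Next I would recognize that $\omega_0(\partial_s u, \partial_t u)\, ds \wedge dt = u^* \omega_0 = d(u^* \lambda_0)$ and apply Stokes' theorem. For $u$ smooth on $\overline{\R^-} \times \T$ and compactly supported in the $s$-variable, Stokes on a truncation $[-R, 0]\times \T$ (with $R$ larger than the support) yields exactly the boundary integral at $s=0$, with no contribution from $s=-R$. Both sides of the target identity are continuous in the $H^1$ topology: the left side is manifestly a bounded quadratic form on $H^1(\R^- \times \T, \R^{2n})$; the right side, after writing $u(0,\cdot)^*\lambda_0 = \scal{p(0,\cdot), \partial_t q(0,\cdot)}\, dt$ where $u=(q,p)$, is controlled via the continuous trace operator $H^1(\R^- \times \T)\to H^{1/2}(\T)$ and the $H^{1/2}$–$H^{-1/2}$ duality pairing after moving $\partial_t$ onto one factor.

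Since smooth maps with bounded support in $s$ are dense in $H^1(\R^- \times \T, \R^{2n})$, the identity extends to all $u\in H^1$. The main care point is justifying Stokes together with the absence of a contribution at $s=-\infty$: the density argument handles both issues at once, because for the approximants the boundary at $-\infty$ is not even reached, and the limit identity is guaranteed by continuity of both sides under the $H^1$ norm. No delicate asymptotic analysis at $-\infty$ is required.
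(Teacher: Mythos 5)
Your proof is correct and follows essentially the same route as the paper's: derive the pointwise identity $|\overline{\partial}_J u|_J^2 = |\partial_s u|_J^2 + |\partial_t u|_J^2 + 2\,\omega_0(\partial_s u,\partial_t u)$ from $\omega_0$-compatibility, rewrite the cross-term as $u^*d\lambda_0$ and integrate via Stokes on the half-cylinder for smooth maps with bounded support in $s$, then pass to $H^1$ by density, using that the trace is continuous into $H^{1/2}(\T)$ and that the boundary quadratic form extends continuously to $H^{1/2}$. The only cosmetic difference is that the paper states the pointwise step as an identity for $|\partial_s u|_J^2 + |\partial_t u|_J^2$ rather than for $|\overline{\partial}_J u|_J^2$; the algebra is the same.
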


\begin{proof}
Let $u\in C^{\infty}_c(\R^-\times \T,\R^{2n})$ (since $\R^-\times \T$ is the closed cylinder, the support of $u$ may intersect the boundary $\{0\}\times \T$).
Since $J$ preserves the inner product $g_J$,
\[
|\partial_s u|_J^2 + |\partial_t u|_J^2 = |\partial_s u|_J^2 + |J \partial_t u|_J^2 = |\partial_s + J \partial_t u|_J^2 - 2 g_J( \partial_s u, J \partial_t u)= |\delbar u|_J^2 - 2 \omega_0 (\partial_s u, \partial_t u).
\]
Integrating the above identity over $\R^-\times \T$ and applying Stokes theorem, we obtain
\[
\begin{split}
\int_{\R^- \times \T} \bigl( |\partial_s u|_J^2 + |\partial_t u|_J^2 \bigr)\, ds\,dt = \int_{\R^- \times \T}  |\overline{\partial}_J u|_J^2 \, ds \,dt - 2 \int_{\R^- \times \T} d\lambda_0 (\partial_s u, \partial_t u) \, ds\, dt\\ = 
\int_{\R^- \times \T}  |\overline{\partial}_J u|_J^2 \, ds\, dt - 2\int_{\R^- \times \T} u^* d\lambda_0 = 
\int_{\R^- \times \T}  |\overline{\partial}_J u|_J^2 \, ds \,dt - 2\int_{\T} u(0,\cdot)^* \lambda_0,
\end{split}
\]
so the identity (\ref{ide}) holds for compactly supported smooth maps. It is well-known that the quadratic form
\[
x\mapsto \int_{\T} x^* \lambda_0, \quad x\in C^{\infty}(\T,\R^{2n}),
\]
extends continuously to $H^{1/2}(\T,\R^{2n})$ (see e.g.\ \cite[Chapter 6]{rab86} or \cite[Section 3.3]{hz94}). Since the trace operator maps
$H^1(\R^- \times \T,\R^{2n})$ continuously into $H^{1/2}(\T,\R^{2n})$, the identity (\ref{ide}) extends to every $u\in H^1(\R^- \times \T,\R^{2n})$.
\end{proof} 

\paragraph{An index theorem.} Let $J:\R^-\times \T \rightarrow \mathrm{L}(\R^{2n})$ be a Lipschitz map which extends continuously to the compactification $[-\infty,0]\times \T$, such that
\begin{equation}
\label{regJ}
\lim_{s_0\rightarrow -\infty} \esssup_{(s,t)\in ]-\infty,s_0[ \times \T} \Bigl( \bigl|\partial_s J(s,t)\bigr| + \bigl| \partial_t J(s,t) - \partial_t J(-\infty,t) \bigr| \Bigr) = 0,
\end{equation}
and $J(s,t)$ is an $\omega_0$-compatible complex structure on $\R^{2n}$, for every $(s,t)\in [-\infty,0]\times \T$. The fact that both $J$ and $\nabla J$ are in $L^{\infty}$ implies that the linear operator
\[
H^2(\R^-\times \T, \R^{2n}) \rightarrow H^1(\R^-\times \T, \R^{2n}), \quad u \mapsto \delbar u,
\]
is continuous.

If $A: \R^- \times \T\rightarrow \mathrm{L}(\R^{2n})$ is a bounded Lipschitz map, then the multiplication operator
\[
H^1(\R^-\times \T,\R^{2n}) \rightarrow H^1(\R^-\times \T,\R^{2n}), \quad u \mapsto JAu,
\]
is continuous. We also assume that $A$ extends continuously to the compactification $[-\infty,0]\times \T$, that
\begin{equation}
\label{regA}
\lim_{s_0\rightarrow -\infty} \esssup_{(s,t)\in ]-\infty,s_0[ \times \T} \Bigl( \bigl|\partial_s  A(s,t)\bigr| + \bigl| \partial_t A(s,t) - \partial_t A(-\infty,t) \bigr|\Bigr) = 0,
\end{equation}
and that for every $t\in \T$ the linear endomorphism
$A(-\infty,t)$ belongs to $\mathrm{sp}(\R^{2n},\omega_0)$, the Lie algebra of the symplectic group $\mathrm{Sp}(\R^{2n},\omega_0)$. We denote by 
$W:[0,1] \rightarrow \mathrm{Sp}(\R^{2n},\omega_0)$ the fundamental solution of the time-periodic linear Hamiltonian systems defined by $A(-\infty,\cdot)$, that is
the solution of the linear Cauchy problem
\begin{equation}
\label{linham}
W'(t) = A(-\infty,t) W(t), \quad W(0) = I,
\end{equation}
and we assume that the symplectic path $W$ is non-degenerate, meaning that 1 is not an eigenvalue of $W(1)$. Under this assumption, the Conley-Zehnder index of $W$ is a well-defined integer, that we denote by $\mu_{CZ}(W)$ (see \cite{sz92,rs95}). By our sign choices, the Conley-Zehnder index of the symplectic path 
\[
W_a : [0,1]\rightarrow \mathrm{Sp}(\R^2,\omega_0), \quad W_a (t) = e^{-atJ_0},
\]
which is non-degenerate if and only if the real number $a$ does not belong to $2\pi \Z$, is
\begin{equation}
\label{czind}
\mu_{CZ}(W_a) = 2\left\lfloor \frac{a}{2\pi} \right\rfloor + 1,\qquad \forall a\in \R\setminus 2\pi \Z,
\end{equation}
that is the unique odd integer which is closest to $a/\pi \in \R\setminus 2\Z$.

Let 
\[
\pi: \R^{2n} = \R^n \times \R^n \rightarrow \R^n , \qquad (q,p) \mapsto q,
\]
be the projection onto the first factor. Since the trace operator
\[
H^1(\R^-\times \T,\R^n) \rightarrow H^{1/2} (\T,\R^n), \quad v\mapsto v(0,\cdot),
\]
is continuous, so is the linear operator
\[
H^2(\R^-\times \T,\R^{2n}) \rightarrow H^{1/2} (\T,\R^n), \quad u \mapsto \partial_t (\pi\circ u) (0,\cdot).
\]
Finally, if $\alpha:\T\rightarrow \mathrm{L}(\R^{2n},\R^n)$ is a Lipschitz map, the linear operator
\begin{equation}
\label{comp}
H^2(\R^-\times \T,\R^{2n}) \rightarrow H^{1/2}(\T,\R^n), \quad u \mapsto \alpha u(0,\cdot),
\end{equation}
is continuous, and actually compact, as it factorizes through
\[
H^2(\R^-\times \T,\R^{2n}) \stackrel{\mathrm{tr}}{\rightarrow} H^{3/2}(\T,\R^{2n}) \hookrightarrow H^1(\T,\R^{2n}) \stackrel{\alpha}{\rightarrow} H^1(\T,\R^n) \hookrightarrow H^{1/2}(\T,\R^n),
\]
where the two inclusions are compact.

By the above considerations, the linear operator
\[
\begin{split}
T : H^2(\R^-\times \T, \R^{2n}) &\rightarrow H^1(\R^-\times \T, \R^{2n}) \times H^{1/2}(\T, \R^n), \\ u & \mapsto \bigl(\delbar u - JA u, \partial_t (\pi\circ u) (0,\cdot) - \alpha u(0,\cdot)\bigr), \end{split}
\] 
is continuous.

\begin{thm}
\label{fredholm}
Under the above assumptions, the operator $T$ is Fredholm of index $\mu_{CZ}(W)$.
\end{thm}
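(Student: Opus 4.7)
The plan is to establish the Fredholm property and compute the index in the usual two-stage manner. The compact term $u\mapsto \alpha u(0,\cdot)$ noted in (\ref{comp}) can be discarded from the start, as compact perturbations affect neither the Fredholm property nor the index.

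For the Fredholm property I would establish the semi-Fredholm estimate $\|u\|_{H^2}\leq C(\|Tu\| + \|u\|_{L^2})$, together with the analogous bound for the formal adjoint. Away from the boundary and the end at $-\infty$, interior elliptic regularity for $\delbar$ suffices. Near $s=0$, the essential input is Lemma \ref{baside}: rewriting (\ref{ide}) as $\|\nabla u\|_{L^2}^2 = \|\delbar u\|_{L^2}^2 - 2\int_\T u(0,\cdot)^*\lambda_0$ and bounding the boundary integral $|\int_\T u(0,\cdot)^*\lambda_0| \leq \|p(0,\cdot)\|_{L^2}\|\partial_t q(0,\cdot)\|_{L^2}$ via the trace inequality and the prescribed value of $\partial_t(\pi\circ u)(0,\cdot)$, one obtains $H^1$-coercivity by absorption; an elliptic bootstrap then yields the full $H^2$ estimate. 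It is crucial here that $\pi$ picks out Lagrangian coordinates, so that the boundary condition eliminates half of $u(0,\cdot)^*\lambda_0$, making the estimate coercive despite the non-Lagrangian nature of the condition itself. On the asymptotic end, non-degeneracy of $W$ translates into invertibility of the operator $v\mapsto -J(-\infty,\cdot)\partial_t v + J(-\infty,\cdot)A(-\infty,\cdot)v$ on $H^1(\T,\R^{2n})$, giving exponential-decay estimates on $(-\infty,-R]\times \T$. Cut-off functions in $s$ then patch the three regions.

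The index is then computed by homotopy invariance. First, conditions (\ref{regJ}) and (\ref{regA}) combined with a cut-off argument show that $T$ is a compact perturbation of the translation-invariant operator $T_\infty$ obtained by replacing $(J,A)$ with $(J(-\infty,\cdot),A(-\infty,\cdot))$ everywhere, so the two have the same index. Next, the space of Lipschitz loops $t\mapsto A(-\infty,t)\in\mathrm{sp}(\R^{2n},\omega_0)$ generating non-degenerate symplectic paths has connected components indexed by the Conley-Zehnder integer, and the Fredholm index is constant under continuous deformations; hence it suffices to verify the formula on one representative per component. Taking $A(-\infty,t)=-aJ_0$ with $a\in\R\setminus 2\pi\Z$, the problem decouples into $n$ copies of a two-dimensional constant-coefficient boundary value problem whose kernel and cokernel can be computed explicitly by Fourier expansion in $t$; counting the modes that admit exponentially decaying solutions at $-\infty$ compatible with the boundary constraint at $s=0$ produces exactly the integer in (\ref{czind}), namely $\mu_{CZ}(W_a)$.

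The main obstacle is the non-Lagrangian nature of the boundary condition, which prevents a direct appeal to the Fredholm theorems of Floer theory for Lagrangian boundary problems; the substitute for Lagrangian coercivity is precisely Lemma \ref{baside}, which controls the Dirichlet energy in terms of $\delbar$ and a boundary primitive that vanishes on the constraint. The delicate bookkeeping step is the explicit Fourier computation in the model case, where signs and off-by-one counts must match in order to recover $\mu_{CZ}(W_a)$ rather than a shifted integer.
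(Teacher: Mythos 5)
Your overall plan matches the paper's: drop the compact boundary term $\alpha u(0,\cdot)$, use Lemma \ref{baside} to control the boundary contribution and establish the semi-Fredholm estimate, use the isomorphism on the translation-invariant asymptotic problem for the end $s\to-\infty$, patch with cut-offs, and then compute the index by deforming to a decoupled constant-coefficient model and doing Fourier analysis. (One small deviation: the paper applies Lemma \ref{baside} to $\partial_t u$ rather than to $u$, because the operator is set up $H^2 \to H^1 \times H^{1/2}$, so one needs the estimate directly at the $H^2$ level; your ``$H^1$-coercivity plus elliptic bootstrap'' is a plausible variant, but the trace/duality bookkeeping for the boundary pairing is cleaner when one works one derivative up, as the paper does.)

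However, there is a genuine gap in your index computation. You propose to deform to a block-diagonal model built out of copies of $A(-\infty,t) = -a J_0$ on symplectic planes. The CZ index of each such $2\times 2$ block is $2\lfloor a/2\pi\rfloor + 1$, which is \emph{odd}. A sum of $n$ odd integers is always $\equiv n \pmod 2$, so this family of models realizes only about half of $\Z$; the Gel'fand--Lidski\v{\i} classification (connected components of non-degenerate loops in $\mathrm{sp}(2n)$ are indexed by all of $\Z$) therefore does not let you connect an arbitrary $A(-\infty,\cdot)$ to one of your models. The paper fixes this by introducing a second elementary model (Lemma \ref{compu2}): a constant hyperbolic block $\mathrm{diag}(1,-1)$ on $\R^2$, whose kernel and cokernel are each one-dimensional and whose CZ index is $0$. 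Combining both types of blocks realizes every integer once $n\geq 2$. For $n=1$ even that is insufficient (one gets only $0$ and odd integers), and the paper handles this by passing to $T\oplus T$ — a $2n$-dimensional problem with index $2\,\mathrm{ind}\,T = 2\mu_{CZ}(W)$ — and dividing by $2$. Your proposal omits both the hyperbolic model and the $n=1$ doubling argument, so the homotopy step does not reach the general case. Secondarily, the Fourier computation for the $-aJ_0$ model is more subtle than ``counting exponentially decaying modes'': the cokernel involves a pair $(v,g)$ with $g\in H^{1/2}(\T,\R)$ coming from the boundary factor, and neither the kernel dimension $2\lceil a/2\pi\rceil$ (for $a>0$) nor the cokernel dimension $1$ individually equals $\mu_{CZ}(W_a)$ — only the difference does — so the claimed bookkeeping deserves to be carried out explicitly.
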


The proof of this theorem takes the remaining part of this section. 

\paragraph{The semi-Fredholm property.} The proof of the next lemma uses the basic identity of Lemma \ref{baside}.

\begin{lem}
\label{semi}
The operator $T$ has finite-dimensional kernel and closed image.
\end{lem}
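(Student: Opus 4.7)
The plan is to establish a semi-Fredholm a priori estimate of the form
\begin{equation*}
\|u\|_{H^2(\R^- \times \T)} \leq C\bigl(\|Tu\|_{H^1 \times H^{1/2}} + \|u\|_{L^2([-R,0]\times \T)}\bigr)
\end{equation*}
for some $R > 0$ and $C > 0$, valid for every $u \in H^2(\R^-\times \T, \R^{2n})$. Once this holds, the compact Rellich embedding $H^2([-R,0]\times \T)\hookrightarrow L^2([-R,0]\times \T)$ forces $\ker T$ to be finite-dimensional, and a standard orthogonal-complement and rescaling argument forces $\ran T$ to be closed.

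The first step is an $H^1$ coercivity bound obtained from Lemma \ref{baside}. Writing $Tu = (f_1, f_2)$ with $f_1 = \overline{\partial}_J u - JAu$ and $f_2 = \partial_t(\pi\circ u)(0,\cdot) - \alpha u(0,\cdot)$, and decomposing $u = (q,p)$, the identity (\ref{ide}) reads
\begin{equation*}
\int_{\R^-\times \T}\bigl(|\partial_s u|_J^2 + |\partial_t u|_J^2\bigr) ds\,dt = \int_{\R^-\times\T}|f_1 + JAu|_J^2 \,ds\,dt - 2\int_\T \bigl\langle p(0,\cdot),\, f_2 + \alpha u(0,\cdot)\bigr\rangle\, dt.
\end{equation*}
Combining the trace inequality $\|u(0,\cdot)\|_{L^2(\T)}^2 \leq 2\|u\|_{L^2}\|\partial_s u\|_{L^2}$ (which uses $u(-\infty,\cdot) = 0$ in $L^2$) with Cauchy-Schwarz and Young's inequality, the boundary terms get absorbed into the left-hand side and one obtains
\begin{equation*}
\|u\|_{H^1(\R^-\times \T)}^2 \leq C\bigl(\|f_1\|_{L^2}^2 + \|f_2\|_{L^2(\T)}^2 + \|u\|_{L^2(\R^-\times \T)}^2\bigr).
\end{equation*}

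Next, to localize the $L^2$-term, I use that the non-degeneracy of $W$ is equivalent to the absence of nontrivial periodic solutions of $v'(t) = A(-\infty,t)v(t)$ on $\T$, hence to the invertibility on $\R\times\T$ of the $s$-translation-invariant Cauchy-Riemann operator $\partial_s + J(-\infty,\cdot)(\partial_t - A(-\infty,\cdot))$. By the decay assumptions (\ref{regJ}) and (\ref{regA}) this model operator approximates $\overline{\partial}_J - JA$ on $(-\infty, -R]\times\T$ with operator-norm error tending to zero as $R\to -\infty$, so a cut-off argument lets one replace $\|u\|_{L^2(\R^-\times \T)}$ by $\|u\|_{L^2([-R,0]\times\T)}$ for $R$ large enough. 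Finally, one bootstraps from $H^1$ to $H^2$ using standard interior and boundary elliptic regularity for the perturbed Cauchy-Riemann system, for instance by applying the same identity to difference quotients of $u$ in the $s$ and $t$ directions.

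The main subtlety is verifying that the boundary condition $\partial_t q(0,\cdot) = \alpha u(0,\cdot) + f_2$, which is non-standard in that it fixes only half of the boundary data and is not of Lagrangian type, is compatible with the ellipticity of $\overline{\partial}_J$ at $\{s=0\}$. This is precisely the role of Lemma \ref{baside}: its boundary contribution $\int_\T u(0,\cdot)^*\lambda_0$ yields the Sobolev-level coercivity consistent with prescribing only $\partial_t q$ (and no component of $p$) on the boundary, and drives both the $H^1$ bound and its bootstrap to $H^2$.
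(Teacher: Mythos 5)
Your overall strategy—derive a semi-Fredholm a priori estimate and conclude by Rellich compactness—is the same as the paper's, and your first-order coercivity from Lemma \ref{baside} is correct (the paper does not bother with this intermediate $H^1$ step; it goes straight to $H^2$). But the step you describe as a routine ``bootstrap from $H^1$ to $H^2$ using standard interior and boundary elliptic regularity'' is exactly where the whole content of the lemma lives, and as written it has a genuine gap. First, difference quotients in the $s$-direction are not available near the boundary $\{s=0\}$; one may only differentiate tangentially and then recover $\partial_{ss}u$ from the equation itself, as the paper does in passing from (\ref{eq3.5}) to (\ref{eq4}). Second, and more seriously, when you apply Lemma \ref{baside} to $\partial_t u$ (the $h\to 0$ limit of your tangential difference quotient), the boundary term becomes $\int_{\T} (\partial_t u(0,\cdot))^*\lambda_0 = \int_{\T} p'(t)\,q''(t)\,dt$. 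Read naively this asks for $q'' \in L^2(\T)$, i.e.\ a trace at level $H^{5/2}$, whereas $u\in H^2(\R^-\times\T)$ only provides traces at level $H^{3/2}$. There is no ``standard'' regularity lemma to invoke here, since the boundary condition is precisely the non-Lagrangian one that the paper is designing this estimate for.

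The paper's resolution is the $H^{1/2}\times H^{-1/2}$ duality estimate (\ref{eq3}),
\[
\int_\T p'(t)\, q''(t)\,dt \;\leq\; \|p'\|_{H^{1/2}(\T)}\,\|q''\|_{H^{-1/2}(\T)} \;\leq\; c\,\|u\|_{H^2(\R^-\times\T)}\,\|q'\|_{H^{1/2}(\T)},
\]
and the $\|u\|_{H^2}^{1/2}\|q'\|_{H^{1/2}}^{1/2}$ term is subsequently absorbed by Young's inequality. This is the mechanism that makes the boundary data enter the estimate through $\|q'\|_{H^{1/2}(\T)}$, which is the component of $\|Tu\|$, rather than through $\|q'\|_{H^1(\T)}$, which would be needed by a naive difference-quotient argument and is not controlled. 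Your final paragraph correctly identifies Lemma \ref{baside} as the tool, but merely asserting that its boundary contribution ``drives \dots the bootstrap to $H^2$'' does not supply the estimate; the precise handling of the boundary integral with the correct Sobolev exponents is the proof, and it is missing. With this boundary estimate supplied, your two-stage structure (first-order coercivity plus localization at $-\infty$, then the second-order estimate) would indeed produce the inequality (\ref{final}) and the semi-Fredholm conclusion.
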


\begin{proof}
The thesis is that $T$ is semi-Fredholm with index different from $+\infty$. The semi-Fredholm property and the semi-Fredholm index are stable with respect to compact perturbations. Therefore, the fact that the operator (\ref{comp}) is compact implies that we may assume that $\alpha=0$.

It is well-known that, due to the non-degeneracy of $W$, the translation-invariant operator
\begin{equation}
\label{iso}
H^2(\R \times \T,\R^{2n}) \rightarrow H^1(\R\times \T,\R^{2n}), \quad u \mapsto \overline{\partial}_{J(-\infty,\cdot)} u - J(-\infty,\cdot) A(-\infty,\cdot) u,
\end{equation}
is an isomorphism (see \cite{sz92}). By (\ref{regJ}) and (\ref{regA}), the norm of the operator
\[
\begin{split}
H^2(]-\infty,s_0[\times \T,\R^{2n})&\rightarrow  H^1(]-\infty,s_0[\times \T,\R^{2n}), \\ u &\mapsto \bigl( J(s,t)-J(-\infty,t) \bigr) \partial_t u - \bigl( J(s,t)A(s,t) - J(-\infty,t) A(-\infty,t) \bigr) u, \end{split}
\]
tends to zero for $s_0\rightarrow -\infty$. Then the fact that (\ref{iso}) is an isomorphism and the fact that the set of isomorphisms is open in the operator norm topology implies that, if $s_0<0$ is small enough, there exists a number $c_0$ such that
\begin{equation}
\label{allinf}
\|u\|_{H^2} \leq c_0 \|\delbar u - JAu\|_{H^1}, \quad \forall u\in H^2(]-\infty,s_0[\times \T,\R^{2n}).
\end{equation}
Let $u$ be in $H^2(\R^-\times \T,\R^{2n})$. Using the fact that the inner products $g_{J(s,t)}$ are equivalent to the Euclidean one, uniformly in $(s,t)$, and applying Lemma \ref{baside} to $\partial_t u$ we obtain
\begin{equation}
\label{eq1}
\begin{split}
\| \nabla \partial_t u \|_{L^2}^2 \leq c_1 \int_{\R^- \times \T} \bigl( |\partial_s \partial_t u|^2_J + |\partial_t \partial_t u|_J^2 \bigr)\, ds\, dt \\
= c_1 \int_{\R^- \times \T} | \delbar \partial_t u |_J^2\, ds \, dt - 2 c_1 \int_{\T} \bigl( \partial_t u(0,\cdot) \bigr)^* \lambda_0.
\end{split}
\end{equation}
Here and in the following lines $c_1,c_2,\dots$ denote positive constants.
Using also the fact that $J$ and $A$ are globally Lipschitz and bounded, the first integral on the right-hand side of (\ref{eq1}) can be bounded by
\begin{equation}
\label{eq2}
\begin{split}
\int_{\R^- \times \T} | \delbar \partial_t u |_J^2\, ds \, dt = \int_{\R^- \times \T} | \partial_t \delbar u - (\partial_t J)( \partial_t u)|_J^2\, ds \, dt \\ \leq 2 \int_{\R^- \times \T} | \partial_t (\delbar u - JA u) |_J^2\, ds \, dt + 2 \int_{\R^- \times \T} | \partial_t (JAu) - (\partial_t J)( \partial_t u) |_J^2\, ds \, dt  \\ \leq c_2 \| \delbar u -JAu\|_{H^1}^2 + c_3 \|u\|_{H^1}^2.
\end{split}
\end{equation}
On the other hand, setting $u(t,0)=(q(t),p(t))$, the second integral on the right-hand side of (\ref{eq1}), can be estimated by
\begin{equation}
\label{eq3}
\int_{\T} \bigl(\partial_t u(0,\cdot) \bigr)^* \lambda_0 = \int_{\T} p'(t) \cdot q''(t)\, dt \leq \|p'\|_{H^{1/2}(\T)} \|q''\|_{H^{-1/2}(\T)} \leq c_4 \|u\|_{H^2(\R^- \times \T)} \|q'\|_{H^{1/2}(\T)},
\end{equation}
From (\ref{eq1}), (\ref{eq2}) and (\ref{eq3}) we obtain the estimate
\begin{equation}
\label{eq3.5}
\| \nabla \partial_t u \|_{L^2} \leq c_5 \| \delbar u - JA u\|_{H^1} + c_6 \|u\|_{H^2}^{1/2} \|q'\|_{H^{1/2}}^{1/2} + c_7 \|u\|_{H^1}.
\end{equation}
From the identity
\[
\partial_{ss} u = \partial_s (\delbar u - JA u) - J \partial_{st} u - (\partial_s J)( \partial_t u) +  \partial_s (JAu),
\]
and from (\ref{eq3.5}) we obtain the estimate
\begin{equation}
\label{eq4}
\begin{split}
\|\partial_{ss} u\|_{L^2} &\leq \|\delbar u - JA u\|_{H^1} + c_8 \|\partial_{st} u\|_{L^2} + c_9 \|u\|_{H^1} \\
&\leq c_{10} \|\delbar u - JA u\|_{H^1} + c_{11} \|u\|_{H^2}^{1/2} \|q'\|_{H^{1/2}}^{1/2} + c_{12} \|u\|_{H^1}.
\end{split}
\end{equation}
The estimates (\ref{eq3.5}) and (\ref{eq4}) imply the bound
\begin{equation}
\label{eq5}
\|u\|_{H^2} \leq c_{13} \| \delbar u - JA u\|_{H^1} + c_{14} \|q'\|_{H^{1/2}}
+ c_{15} \|u\|_{H^1}, \qquad \forall u\in H^2(\R^-\times \T,\R^{2n}).
\end{equation}
Let $\chi\in C^{\infty}(\R^-)$ be a cut-off function such that $\chi=1$ on $]-\infty,s_0-1]$ and $\chi=0$ on $[s_0,0]$. Since
\[
Tu = (\delbar u -JA u,q'),
\]
writing $u=\chi u + (1-\chi) u$ and applying (\ref{allinf}) to $\chi u$ and (\ref{eq5}) to $(1-\chi)u$, we obtain the estimate
\begin{equation}
\label{final}
\|u\|_{H^2} \leq c_{16} \bigl( \|T u\|_{H^1(\R^-\times \T) \times H^{1/2}(\T)} + \|u\|_{H^1(]s_0-1,0[\times \T)} \bigr), \qquad \forall u\in H^2(\R^-\times \T,\R^{2n}).
\end{equation}
Since the operator
\[
H^2(\R^-\times \T) \rightarrow H^1(]s_0-1,0[\times \T), \quad u \mapsto u|_{]s_0-1,0[\times \T},
\]
is compact, the estimate (\ref{final}) implies that $T$ has finite dimensional kernel and closed range (see e.g.\ \cite{}).
\end{proof}

\paragraph{Some explicit index computations.}
In the next two lemmas we compute explicitly the kernel and cokernel of $T$ for particular choices of $J$, $A$ and $\alpha$. In both cases we assume that $n=1$, that $J = J_0$ is the standard complex structure on $\R^2$ and that $\alpha= 0$. 

\begin{lem}
\label{compu1}
Under the above assumptions, let $A(s,t) \equiv -aJ_0$ with $a\in \R\setminus 2\pi \Z$. Then
\[
\dim \ker T = \left\{ \begin{array}{ll} 2 \lceil \frac{a}{2\pi} \rceil & \mbox{if } a>0 \\
0 & \mbox{if } a<0 \end{array} \right. , \quad
\dim \coker T = \left\{ \begin{array}{ll}  1 & \mbox{if } a>0 \\
2 \lfloor - \frac{a}{2\pi} \rfloor + 1 & \mbox{if } a<0 \end{array} \right. .
\]
In particular,
\[
\ind T = 2 \left\lfloor \frac{a}{2\pi} \right\rfloor + 1 = \mu_{CZ}(W),
\]
for every $a\in \R\setminus 2\pi \Z$.
\end{lem}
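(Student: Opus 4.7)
The plan is to Fourier expand in $t$, decoupling the problem into a family of constant-coefficient ODEs in $s$. Identifying $\R^2\cong\C$ via $(q,p)\mapsto q+ip$ makes $J_0$ multiplication by $i$; then $JA=-aJ_0^2=aI$, so the interior equation $\overline{\partial}_J u - JAu = 0$ reads $\partial_s u + i\partial_t u - au = 0$ with boundary condition $\partial_t \re u(0,\cdot) = 0$. Writing $u(s,t) = \sum_{k\in\Z} u_k(s)e^{2\pi i k t}$ reduces the PDE to $u_k'(s) = \lambda_k u_k(s)$ with $\lambda_k := 2\pi k + a$, whose only solutions in $H^2(\R^-;\C)$ are $u_k(s) = c_k e^{\lambda_k s}$ with $c_k = 0$ unless $\lambda_k > 0$ (the hypothesis $a\notin 2\pi\Z$ rules out $\lambda_k = 0$). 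Since $\re u(0,t) = \tfrac12\sum_k (c_k + \overline{c_{-k}}) e^{2\pi i k t}$, the boundary condition translates into $c_{-k} = -\overline{c_k}$ for every $k\ne 0$.

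To read off the kernel, set $m := \lfloor a/(2\pi)\rfloor$. When $a > 0$, the admissible modes are $k \ge -m$; the free complex parameters are $c_0, c_1,\dots,c_m$ (with $c_{-k}$ for $1\le k\le m$ determined by the boundary relation), while $c_k = 0$ for $k \ge m+1$ because $c_{-k}$ is then inadmissible. This gives $\dim \ker T = 2(m+1) = 2\lceil a/(2\pi)\rceil$. When $a < 0$, every admissible index $k \ge \lceil -a/(2\pi)\rceil$ is strictly positive and its reflection $-k$ is inadmissible, so the boundary relation forces every $c_k$ to vanish and the kernel is trivial.

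For the cokernel, I would pair $T$ with $(v,h)$ in the appropriate dual spaces and, by elliptic regularity for the adjoint problem, assume $(v,h)$ smooth. Integration by parts yields
\[
\langle (v,h),Tu\rangle = \int_{\R^-\times\T} u\cdot L^* v\, ds\,dt + \int_\T \bigl[ v(0,t)\cdot u(0,t) - (\partial_t h)(t)\,\re u(0,t) \bigr]\, dt,
\]
where $L^* v := -\partial_s v + i\partial_t v - av$. Hence $(v,h)$ annihilates the image if and only if $L^* v = 0$ on $\R^-\times\T$, $\mathrm{Im}\, v(0,\cdot) = 0$, and $\re v(0,t) = (\partial_t h)(t)$; the last condition requires $\int_\T \re v(0,t)\, dt = 0$ and then determines $h$ up to an additive constant. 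Fourier expansion of $L^* v = 0$ gives $v_k(s) = d_k e^{-\lambda_k s}$, admissible precisely when $\lambda_k < 0$, i.e.\ $k \le n_0 := \lfloor -a/(2\pi)\rfloor$; the reality condition becomes $d_k = \overline{d_{-k}}$ and the zero-mean condition becomes $d_0 = 0$.

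If $a > 0$ then $n_0 \le -1$, so the admissibility range $k \le n_0$ is disjoint from its reflection, forcing every $d_k$ to vanish; the cokernel reduces to the one-dimensional space of constant $h$. If $a < 0$ then $n_0 \ge 0$ and the free parameters are $d_1,\dots,d_{n_0}\in \C$ (determining $d_{-1},\dots,d_{-n_0}$) plus the additive constant in $h$, giving dimension $2n_0 + 1 = 2\lfloor -a/(2\pi)\rfloor + 1$. Combining the two counts gives $\mathrm{ind}\, T = 2\lfloor a/(2\pi)\rfloor + 1 = \mu_{CZ}(W)$ by (\ref{czind}). The main subtle step is translating the non-Lagrangian boundary condition into clean Fourier pairings on both the primal side ($c_{-k}=-\overline{c_k}$) and the dual side (reality together with zero mean); once this is in place, the dimension count is immediate.
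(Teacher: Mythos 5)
Your proof is correct and takes essentially the same route as the paper: Fourier expansion in $t$ reduces the interior equation to first-order ODEs in $s$, the $H^2$ (resp.\ $H^1$) decay condition selects the modes with $\lambda_k>0$ (resp.\ $\lambda_k<0$), and the non-Lagrangian boundary condition translates into the Fourier relation $c_{-k}=-\overline{c_k}$ on the kernel side and $d_k=\overline{d_{-k}}$ together with the zero-mean condition $d_0=0$ on the cokernel side, after which the dimension counts match the paper's. The only differences are notational: you phrase the boundary constraint as a single conjugate-linear relation rather than splitting into real and imaginary parts as the paper does, which is slightly cleaner but mathematically identical.
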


\begin{proof}
Here it is convenient to identify $\R^2$ with $\C$ by the standard complex structure $J_0$, which is then denoted by $i$. The solution $W(t)=e^{-ait}$ of (\ref{linham})
is non-degenerate precisely because $a\notin 2\pi \Z$, and its Conley-Zehnder index is given by formula (\ref{czind}). Let $u\in H^2(\R^-\times \T,\C)$ be an element of the kernel of $T$. Since $\partial_t u$ solves a linear Cauchy-Riemann equation on $\R^-\times \T$ with real boundary conditions, $u$ is smooth up to the boundary and it solves the problem
\begin{eqnarray}
\label{q1}
& \partial_s u + i \partial_t u - a u =0, \qquad &\forall (s,t) \in \R^- \times \T,\\
\label{q2}
& \re \partial_t  u(0,t) = 0, \qquad &\forall t\in \T. 
\end{eqnarray}
By taking a Fourier expansion in the $t$ variable, one sees that the general solution of (\ref{q1}) is 
\[
u(s,t) = \sum_{k\in\Z} e^{s(2\pi k + a)} e^{2\pi i k t} u_k, 
\]
where the $u_k$'s are complex numbers. Such a function is in $H^2(\R^- \times \T)$ if and only if
\begin{equation}
\label{p1} 
u_k = 0, \qquad \forall k < - \frac{a}{2\pi}.
\end{equation}
So (\ref{q2}) takes the form
\begin{equation}
\label{q3}
\sum_{k> -\frac{a}{2\pi}} 2\pi k \bigl( \re u_k \sin (2\pi k t) + \im u_k \cos  (2\pi k t) \bigr) = 0, \quad \forall t\in \T.
\end{equation}
When $a<0$, the above sum runs over positive numbers $k$, so the above identity implies that $u_k=0$ for every $k>-a/(2\pi)$. Together with (\ref{p1}), we deduce that
\[
\ker T = (0), \qquad \mbox{if } a<0.
\]
When $a>0$, we can rewrite (\ref{q3}) as
\[
\begin{array}{r}
\displaystyle{\sum_{1\leq k < \frac{a}{2\pi}} 2\pi k \bigl( \re (u_k+u_{-k})  \sin (2\pi k t) + \im (u_k-u_{-k}) \cos  (2\pi k t) \bigr)} \\ \displaystyle{+  \sum_{k> \frac{a}{2\pi}} 2\pi k \bigl( \re u_k \sin (2\pi k t) + \im u_k \cos  (2\pi k t) \bigr) = 0,} \end{array} \qquad  \forall t\in \T,
\]
which holds if and only if
\begin{equation}
\label{p2}
\begin{split}
\re (u_k+u_{-k}) = \im (u_k-u_{-k}) = 0, \qquad & \mbox{if } 1\leq k < \frac{a}{2\pi}, \\ u_k = 0, \qquad & \mbox{if } k> \frac{a}{2\pi}.
\end{split}
\end{equation}
The parameter $u_0\in \C$ is instead free, so (\ref{p1}) and (\ref{p2}) imply that
\[
\dim \ker T = 2 \left\lfloor \frac{a}{2\pi} \right\rfloor + 2 = 2 \left\lceil \frac{a}{2\pi} \right\rceil, \qquad \mbox{if } a>0.
\]
This concludes the computation of the dimension of the kernel of $T$. 

We know from Lemma \ref{semi} that $T$ has closed image. Since the $L^2$ product is a continuous non-degenerate bilinear symmetric form on $H^1$ and on $H^{1/2}$, the dimension of the cokernel of $T$ equals the dimension of the space of pairs
\[
(v,g) \in H^1(\R^-\times \T,\C) \times H^{1/2}(\T,\R)
\]
which are $L^2$-orthogonal to the image of $T$, i.e.\ they satisfy
\begin{equation}
\label{adj}
\re \int_{\R^- \times \T} (\partial_s u + i\partial_t u - a u) \overline{v}\, ds\, dt + \int_{\T} \re \partial_t u(0,t) g(t)\, dt = 0,
\end{equation}
for every $u\in H^2(\R^-\times \T,\C)$. Let $(v,g)$ be such a pair. By letting $u$ vary among all smooth compactly supported functions on $\R^-\times \T$ such that $\re u(0,t)=0$ for every $t\in \T$, the standard regularity results for weak solutions of the Cauchy-Riemann operator (see e.g.\ \cite[Appendix B]{ms04}) imply that $v$ is smooth up to the boundary and satisfy
\begin{equation}
\label{r1}
\partial_s v - i \partial_t v + a v = 0, \qquad \forall (s,t)\in 
\R^- \times \T.
\end{equation}
Moreover, (\ref{adj}) becomes
\begin{equation}
\label{adj2}
\re \int_{\T} u(0,t) \overline{v(0,t)} \, dt + \int_{\T} \re\partial_t u(0,t) g(t) \, dt =0.
\end{equation}
By taking $u(0,\cdot)$ with vanishing real part, (\ref{adj2}) implies that
\begin{equation}
\label{r2}
\im v(0,t) = 0 , \qquad \forall t\in \T.
\end{equation}
On the other hand, taking $u(0,\cdot)$ with vanishing imaginary part, we get that $g$ is smooth and satisfies
\begin{equation}
\label{r3}
g'(t) = \re v(0,t) , \qquad \forall t\in \T.
\end{equation}
Therefore, the dimension of the cokernel of $T$ equals the dimension of the space of smooth solutions $(v,g)$ of (\ref{r1}), (\ref{r2}) and (\ref{r3}) such that $v$ belongs also to $H^1(\R^-\times \T)$. The general solution $v$  of (\ref{r1}) has the form
\[
v(s,t) = \sum_{k\in \Z} e^{-(2\pi k + a)s} e^{2\pi i k t} v_k,
\]
and it belongs to $H^1(\R^-\times \T)$ if and only if
\begin{equation}
\label{c1}
v_k = 0 , \qquad \forall k > - \frac{a}{2\pi}.
\end{equation}
Therefore,
\[
\im v(0,t) = \sum_{k< - \frac{a}{2\pi}} \bigl( \im v_k \cos (2\pi k t) + \re v_k \sin (2\pi k t) \bigr), \qquad \forall t\in \T,
\]
and if $a>0$ the condition (\ref{r2}) implies that $v$ is identically zero. In this case, (\ref{r3}) says that $g$ is constant, so
\[
\dim \coker T = 1 , \qquad \mbox{if } a>0.
\]
If $a<0$, (\ref{r2}) can be rewritten as
\[
\begin{array}{r} \displaystyle{
\im v_0 + \sum_{1\leq k < - \frac{a}{2\pi}} \bigl( \im (v_k+v_{-k})  \cos (2\pi k t) + \re (v_k-v_{-k})  \sin (2\pi k t) \bigr)}\\ \displaystyle{+ \sum_{k> - \frac{a}{2\pi}} \bigl( \im v_{-k} \cos (2\pi k t) - \re v_{-k} \sin (2\pi k t) \bigr) = 0,} \end{array} \qquad \forall t\in \T,
\]
which is equivalent to
\begin{equation}
\label{c2}
\begin{split}
\im v_0 = 0, \qquad & \\
\im (v_k+v_{-k}) = \re (v_k-v_{-k}) = 0, \qquad & \mbox{if } 1\leq k < - \frac{a}{2\pi}, \\ v_k = 0, \qquad & \mbox{if } k< \frac{a}{2\pi}.
\end{split}
\end{equation}
Given $v$, the equation (\ref{r3}) can be solved if and only if
\begin{equation}
\label{r4}
\int_{\T} \re v(0,t) \, dt = 0,
\end{equation}
or equivalently
\begin{equation}
\label{c3}
\re v_0 = 0,
\end{equation}
and in this case it has a one-dimensional space of solutions $g$.  
Together with (\ref{c1}), (\ref{c2}) and (\ref{c3})  imply that the space of smooth solutions $v\in H^1(\R^-\times \T)$ of (\ref{r1}), (\ref{r2}) and (\ref{r4}) has dimension $2 \lfloor -a/2\pi \rfloor$. We deduce that 
\[
\dim \coker T =  2 \left \lfloor - \frac{a}{2\pi} \right\rfloor + 1,
\]
thus concluding the computation of the dimension of the cokernel of $T$.

Finally, if $a>0$ then
\[
\ind T = \dim \ker T - \dim \coker T = 2 \left\lceil \frac{a}{2\pi} \right\rceil - 1 =  2 \left\lfloor \frac{a}{2\pi} \right\rfloor + 1,
\]
and if $a<0$ then
\[
\ind T = \dim \ker T - \dim \coker T = 0 -   \Bigl( 2 \left\lfloor - \frac{a}{2\pi} \right\rfloor + 1 \Bigr) = 2  \left\lceil \frac{a}{2\pi} \right\rceil - 1 =  2 \left\lfloor \frac{a}{2\pi} \right\rfloor + 1,
\]
as claimed.
\end{proof}

\begin{lem}
\label{compu2}
Assume that $n=1$, $J= J_0$, $\alpha= 0$, and let 
\[
A(s,t) \equiv \left( \begin{array}{cc} 1 & 0 \\ 0 & -1 \end{array} \right), \qquad \forall (s,t)\in \R^-\times \T.
\]
Then 
\[
\dim \ker T = \dim \coker T =1.
\]
 In particular,
\[
\ind T = 0 = \mu_{CZ}(W).
\]
\end{lem}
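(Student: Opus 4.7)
The plan is to mimic step-by-step the Fourier-expansion strategy used in the proof of Lemma \ref{compu1}, adapted to the present hyperbolic situation. First I would record that the fundamental solution of $W'(t) = AW(t)$, $W(0)=I$, is $W(t) = \mathrm{diag}(e^t,e^{-t})$, which is hyperbolic with eigenvalues $e^{\pm 1}$, so $W$ is non-degenerate; a direct unwinding of the axioms (or the spectral-flow description in \cite{rs95}) gives $\mu_{CZ}(W)=0$. It thus suffices to prove that the kernel and cokernel of $T$ are both one-dimensional.

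For the kernel, writing $u=(q,p)$ and using $J_0 A u = (p,q)$, the equation $\bar\partial_{J_0} u - J_0 A u = 0$ becomes the real system
\[
\partial_s q - \partial_t p - p = 0, \qquad \partial_s p + \partial_t q - q = 0,
\]
and the boundary condition is $\partial_t q(0,\cdot)=0$. Expanding in Fourier modes $u(s,t)=\sum_k (q_k(s),p_k(s))\,e^{2\pi i k t}$, each mode satisfies a $2\times 2$ ODE whose coefficient matrix has eigenvalues $\pm \lambda_k$ with $\lambda_k := \sqrt{1+4\pi^2 k^2} > 0$. The $H^2$ condition on $\R^-$ forces the $-\lambda_k$-component to vanish, leaving a one-parameter family
\[
(q_k(s),p_k(s)) = c_k\, e^{\lambda_k s}\bigl(1+2\pi i k,\;\lambda_k\bigr).
\]
The boundary condition $\partial_t q(0,t)=0$ reads $2\pi i k\,c_k(1+2\pi i k) = 0$ for each $k$, so $c_k = 0$ for all $k\neq 0$, while $c_0\in\R$ is free (real-valuedness forces the single condition $c_{-k}=\overline{c_k}$, automatic). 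Hence $\dim \ker T = 1$, spanned by $u(s,t) = e^s(1,1)$.

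For the cokernel, I would argue as in Lemma \ref{compu1}: a pair $(v,g)\in H^1(\R^-\times\T,\R^2)\times H^{1/2}(\T,\R)$ lies in the $L^2$-annihilator of $\ran T$ iff, after integration by parts (smoothness up to the boundary following from the standard Cauchy-Riemann regularity \cite[Appendix B]{ms04}), $v=(a,b)$ solves the adjoint system
\[
\partial_s v - J_0 \partial_t v + J_0 A v = 0 \quad\text{on } \R^-\times\T,
\]
together with the boundary conditions $b(0,t)=0$ and $a(0,t)=g'(t)$. Fourier expanding $v$ exactly as before, the same matrix (up to a sign) appears, the $H^1$ integrability on $\R^-$ again forces retention only of the $e^{\lambda_k s}$ modes with eigenvector proportional to $(-(1+2\pi i k),\lambda_k)$, and then $b_k(0)=d_k\lambda_k=0$ kills every mode since $\lambda_k>0$. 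Thus $v\equiv 0$, forcing $g'\equiv 0$, so $g$ is constant and $\dim \coker T = 1$.

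The computations themselves are essentially routine; the only step that requires care — and which I would expect to be the main point of friction — is the correct derivation of the adjoint boundary condition $a(0,\cdot)=g'$ together with $b(0,\cdot)=0$, because here, unlike in Lemma \ref{compu1} where the Lagrangian splitting $\R\oplus i\R$ automatically decoupled the two components, the matrix $A$ couples $q$ and $p$ in the boundary variation. One must keep track of both boundary terms from the $\partial_s$ and $\partial_t$ integrations by parts and test against $u(0,\cdot)$ with arbitrary real and imaginary parts independently; this is what forces the two boundary conditions on $(a,b)$ and pins down the one-dimensional freedom coming entirely from the constant $g$. Once this is done, the conclusion $\ind T = 1-1 = 0 = \mu_{CZ}(W)$ is immediate.
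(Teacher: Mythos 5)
Your proof is correct, but it takes a genuinely different route from the paper's. You perform a mode-by-mode Fourier analysis, computing the $2\times 2$ coefficient matrix $\begin{pmatrix} 0 & 1+2\pi ik \\ 1-2\pi ik & 0 \end{pmatrix}$ for each mode, finding the eigenvalues $\pm\lambda_k=\pm\sqrt{1+4\pi^2k^2}$, discarding the non-decaying branch, and imposing the boundary condition separately on each mode; this is exactly the strategy of Lemma~\ref{compu1} transported to the hyperbolic case, and all your computations (the eigenvectors $(1+2\pi ik,\lambda_k)$ for the kernel, $(-(1+2\pi ik),\lambda_k)$ for the cokernel, the reality condition $c_{-k}=\overline{c_k}$, the adjoint boundary conditions $b(0,\cdot)=0$ and $a(0,\cdot)=g'$) check out. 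The paper instead uses a Schwarz reflection argument: in complex notation $A$ becomes the conjugation operator, the equation for $u$ reads $\partial_s u + i\partial_t u - i\bar u = 0$ with boundary condition $\re\partial_t u(0,\cdot)=0$, and setting $w := i\partial_t u$ one checks that $w(0,\cdot)$ is real, so $w$ extends by $w(s,t):=\overline{w(-s,t)}$ to an $H^1$ solution of a translation-invariant Cauchy--Riemann operator on the full cylinder that is known to be an isomorphism; hence $w\equiv 0$, reducing the problem to an ODE in $s$. The paper's argument is shorter and avoids the explicit spectral computation, replacing it with one invocation of the full-cylinder isomorphism statement (which is exactly the non-degeneracy input already used to establish the semi-Fredholm estimate); your argument is more explicit and self-contained, at the cost of re-deriving what the reflection gives for free. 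Both are entirely rigorous.
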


\begin{proof}
In this case, the solution of (\ref{linham}) is
\[
W(t) =  \left( \begin{array}{cc}  e^t & 0 \\ 0 & e^{-t} \end{array} \right),
\]
a non-degenerate symplectic path with zero Conley-Zehnder index. Let $u\in H^2 (\R^-\times \T,\R^2)$ be an element of the kernel of $T$. If we identify $\R^2$ to $\C$ by the standard complex structure $J_0=i$, $A$ is the conjugacy operator, hence $u$ satisfies
\begin{eqnarray*}
\partial_s u + i \partial_t u - i \overline{u} = 0, \qquad & \forall (s,t)\in \R^-\times \T, \\
\re \partial_t u(0,t) = 0, \qquad & \forall t\in \T.
\end{eqnarray*}
If we set
\[
w(s,t) := i \partial_t u(s,t) \quad \forall s\leq 0, \qquad w(s,t) := \overline{w(-s,t)} \quad \forall s>0,
\]
the fact that $w(0,t)$ is real implies that $w$ belongs to $H^1(\R\times \T,\C)$. Moreover, a direct computation shows that
\[
\partial_s w + i \partial_t w + i \overline{w} = 0, \qquad \forall (s,t)\in \R\times \T .
\]
Since the translation invariant operator
\[
H^1(\R\times \T,\C) \rightarrow L^2(\R\times \T,\C), \quad w \mapsto \partial_s w + i \partial_t w + i \overline{w},
\]
is an isomorphism, we deduce that $w$ is identically zero. Therefore, $u(s,t)=y(s)$, where $y\in H^2(\R^-,\C)$ solves the ordinary differential equation
\[
y'(s)=i\overline{y(s)}, \qquad \forall s\in \R^-.
\]
Such an equation has a one-dimensional space of solutions in $H^2(\R,\C)$, namely
\[
y(s) = \lambda (1+i) e^s, \qquad \mbox{for } \lambda \in \R,
\]
hence
\[
\dim \ker T = 1.
\]

Consider a pair
\[
(v,g) \in H^1(\R^-\times \T,\C) \times H^{1/2}(\T,\R),
\]
which is $L^2$-orthogonal to the image of $T$, that is satisfies
\begin{equation}
\label{adj3}
\re \int_{\R^- \times \T} (\partial_s u + i\partial_t u - i \overline{u}) \overline{v}\, ds\, dt + \int_{\T} \re \partial_t u(0,t) g(t)\, dt = 0,
\end{equation}
for every $u\in H^2(\R^-\times \T,\C)$. Arguing as in the proof of Lemma \ref{compu1}, we see that (\ref{adj3}) is equivalent to the fact that $(v,g)$ is a smooth solution of
\begin{eqnarray*}
\partial_s v - i \partial_t v + i \overline{v} = 0, \qquad & \forall (s,t)\in \R^-\times \T, \\
\im v(0,t) = 0, \qquad & \forall t\in \T, \\
g'(t) = \re v(0,t), \qquad & \forall t\in \T,
\end{eqnarray*}
with $v\in H^2(\R^-\times \T)$.
If we extend $v$ to the whole cylinder by Schwarz reflection,
\[
v(s,t) := \overline{v(-s,t)}, \qquad \forall (s,t)\in \R^+\times \T,
\]
we see that $v$ belongs to the kernel of the translation invariant operator
\[
H^2(\R\times \T,\C) \rightarrow H^1(\R\times \T,\C), \quad v \mapsto \partial_s v - i \partial_t v + i \overline{v}.
\]
Since this operator is an isomorphism, $v$ is identically zero. We conclude that $(v,g)$ satisfies (\ref{adj3}) if and only if $v=0$ and $g$ is constant, hence
\[
\dim \coker T = 1,
\]
as claimed.
\end{proof}

\paragraph{Proof of Theorem \ref{fredholm}.} By Lemma \ref{semi}, the operator $T$ is semi-Fredholm and we just have to prove that its index equals the Conley-Zehnder index of $W$. 
Since the semi-Fredholm index is locally constant, we can reduce the computation of the index of $T$ to particular classes of $J,A,\alpha$, by considering homotopies of these data which keep the semi-Fredholm property. Since the space of $\omega_0$-compatible complex structures on $\R^{2n}$ and the space $\mathrm{L}(\R^{2n},\R^n)$ are contractible, we can assume that $J=J_0$ and $\alpha=0$. The admissible homotopies of $A$ are those which keep the solution of (\ref{linham})
non-degenerate. We recall that, as already proven by I.\ M.\ Gel'fand and V.\ B.\ Lidski\v{\i} in \cite{gl58}, the Conley-Zehnder index labels the ``regions of stability of canonical systems of differential equations with periodic coefficients'': more precisely, two continuous loops
\[
A_0,A_1 : \T \rightarrow \mathrm{sp}(\R^{2n},\omega_0)
\]
with non-degenerate fundamental solutions
\[
W_j : [0,1] \rightarrow \mathrm{Sp}(\R^{2n},\omega_0), \quad W_j'(t) = A(t) W_j(t), \quad W_j(0)=I, \quad j=0,1,
\]
are homotopic via a path $[0,1]\rightarrow \mathrm{sp}(\R^{2n},\omega_0)$, $s\mapsto A_r(\cdot)$, with non-degenerate fundamental solutions for every $r\in [0,1]$ if and only if $\mu_{CZ}(W_0)=\mu_{CZ}(W_1)$. Let us consider the class $\mathscr{A}$ of endomorphisms $A_0\in \mathrm{sp}(\R^{2n},\omega_0)$ which preserve the standard symplectic splitting 
\[
\R^{2n} = \R^2 \oplus \dots \oplus \R^2,
\]
and which coincide with one of the endomorphisms considered in Lemmas \ref{compu1} and \ref{compu2} on each of these symplectic two-dimensional spaces. The Conley-Zehnder index of the fundamental solution of the linear Hamiltonian system on $\R^2$ given by an element of $\mathrm{sp}(\R^{2n},\omega_0)$ as in Lemma \ref{compu1} can be any odd number, while in the case of Lemma \ref{compu2} we obtain Conley-Zehnder index zero. Therefore, in the case $n\geq 2$
we can find an element $A_0\in \mathscr{A}$ such that the Conley-Zehnder index of the corresponding fundamental solution equals $\mu_{CZ}(W)$. By the above mentioned result of Gel'fand and Lidski\v{\i},
 we can find a homotopy $A_r$ from the constant map $A_0$ to the map $A$, which keeps $A_r(-\infty,t)$ in $\mathrm{sp}(\R^{2n},\omega_0)$ and such that the  solution $W_r$ of 
\[
W_r'(t) = A_r(-\infty,t) W_r(t), \quad W_r(0)=I,
\]
is non-degenerate. The corresponding path of operators $T_r$ is semi-Fredholm for every $r\in [0,1]$, and $T_0$ is the direct sum of the operators considered in Lemmas \ref{compu1} and \ref{compu2}. By these lemmas, together with the additivity of the Fredholm index and of the Conley-Zehnder index with respect to direct sums, we obtain that
\[
\ind T = \ind T_0 = \mu_{CZ}(W_0) = \mu_{CZ}(W),
\]
as claimed. When $n=1$, the argument is not complete because the Conley-Zehnder index of the fundamental solutions associated to elements in $\mathscr{A}$ is either zero or odd. In this case, we can consider the operator $T\oplus T$, which has index $2\,\ind T$ and which is of the form considered above with $n=2$. By what we have already proven and by
the additivity of the Conley-Zehnder index, $T\oplus T$ has index $2\,\mu_{CZ}(W)$, hence $T$ has index $\mu_{CZ}(W)$. This concludes the proof of Theorem \ref{fredholm}.

\hfill $\Box$

\section{The reduced unstable manifold}
\label{secrum}

Let $M$ be a smooth compact $n$-dimensional manifold without boundary. We shall also assume that $M$ is orientable: this assumptions is by no means necessary, but it simplifies the exposition (see Remark \ref{nonorient} below for information on how to deal with the non-orientable case). Let $T^*M$ be the cotangent bundle of $M$, whose elements are denoted  as $(q,p)$, with $q\in M$ and $p\in T^*_qM$, and let
\[
\pi: T^*M \rightarrow M, \quad (q,p) \mapsto q,
\]
be the bundle projection.
The cotangent bundle $T^*M$ is endowed with the standard Liouville form $\lambda=p\, dq$ and the standard symplectic structure $\omega=d\lambda = dp\wedge dq$. 
Let $H\in C^{\infty}(\T\times T^*M)$ be a Hamiltonian and let $X_H$ be the corresponding time-periodic Hamiltonian vector field on $T^*M$, which is defined by
\[
\omega (X_H (t,x), \xi) = - d_xH (t,x)[\xi], \qquad \forall x\in T^*M, \; \forall \xi \in T_xT^*M.
\]
Let $x \in C^{\infty}(\T,T^*M)$ be a 1-periodic orbit of $X_H$, which we assume to be {\em non-degenerate}, meaning that 1 is not an eigenvalue of the differential of the time-one map of $X_H$ at $x(0)$. 

The {\em Conley-Zehnder index $\mu_{CZ}(x)$ of $x$} is defined as the Conley-Zehnder index of the symplectic path $W:[0,1] \rightarrow \mathrm{Sp}(T^*\R^n,\omega_0)$ which is obtained by reading the differential of the Hamiltonian flow along $x$ on $T^*\R^n$ by means of a vertical-preserving trivialization of $x^*(TT^*M)$. More precisely, let
\[
\Theta(t) : (T^* \R^n,\omega_0) \rightarrow (T_{x(t)} T^*M, \omega_{x(t)}), \quad t\in \T,
\]
be a smooth family of symplectic isomorphisms such that 
\begin{equation}
\label{vert}
\Theta(t) \bigl( (0) \times (\R^n)^* \bigr) = T_{x(t)}^v T^*M := \ker D\pi(x(t)), \qquad \forall t\in \T,
\end{equation}
and let
\[
W(t) := \Theta(t)^{-1} D_x \phi_{X_H} (t,x(0)) \Theta(0) : T^* \R^n \rightarrow T^* \R^n,
\] 
where $\phi_{X_H}$ denotes the flow of the Hamiltonian vector field $X_H$. The condition (\ref{vert}) is the vertical-preserving requirement and a trivialization satisfying (\ref{vert}) exists since the vector bundle $(\pi\circ x)^*(TM)$ is trivial, because $M$ is orientable. Then $\mu_{CZ}(x):= \mu_{CZ}(W)$ does not depend on the choice of the vertical-preserving trivialization $\Theta$ (see \cite[Lemma 1.3]{as06}). 

We fix also a $\omega$-compatible smooth almost complex structure $J$ on $T^*M$, which we allow to depend smoothly on $t\in \T$. 
\begin{defn} The {\em reduced unstable manifold} of the non-degenerate 1-periodic orbit $x$ is the set $\mathscr{U}(x)$ consisting of all maps
\[
u\in C^{\infty} (\R^-\times \T,T^*M)
\]
such that:
\begin{enumerate}
\item $u$ solves the equation
\begin{equation}
\label{cr}
\partial_s u  + J(t,u) \bigl( \partial_t u - X_H(t,u) \bigr) = 0, 
\end{equation}
on $\R^- \times \T$;
\item the energy of $u$, that is the quantity
\[
E(u) := \int_{\R^-\times \T} |\partial_s u|_J^2\, ds\, dt,
\]
is finite;
\item $u(s,t)$ converges to $x(t)$ for $s\rightarrow -\infty$, uniformly in $t\in \T$.
\item the restriction of $u$ to the boundary of the half-cylinder satisfies the equation 
\[
\partial_t (\pi\circ u) (0,t) = d_p H\bigl(t,u(0,t)\bigr),
\]
where $d_p H(t,x)$ denotes the fiber-wise differential of $H(t,\cdot)$ at $x$, which is an element of $(T_{\pi(x)}^*M)^* = T_{\pi(x)} M$.
\end{enumerate}
\end{defn}

The name {\em unstable manifold} is motivated by the fact that (\ref{cr}) is the $L^2$-negative gradient flow equation of the Hamiltonian action functional
\begin{equation}
\label{action}
\mathbb{A}: C^{\infty}(\T,T^*M) \rightarrow \R, \qquad \mathbb{A}(x) = \int_{\T} x^*(\lambda) - \int_{T} H(t,x(t))\, dt,
\end{equation}
of which the periodic orbit $x$ is a critical point. The adjective {\em reduced} refers to the fact that we are adding to the conditions (i), (ii), (iii), which would define an infinite dimensional object, the condition (iv), which says that the loop $u(0,\cdot):\T \rightarrow T^*M$ solves ``half of the Hamiltonian equations'' associated to $H$. The set $\mathscr{U}(x)$ always contains the stationary solution
\[
u_0(s,t) := x(t), \quad \forall (s,t)\in \R^- \times \T.
\]

Consider a smooth map $\varphi:\T \times \R^{2n} \rightarrow T^*M$ such that
\[
\varphi(t,0) = x(t), \qquad \forall t\in \T,
\]
and $z \mapsto \varphi(t,z)$ is a smooth embedding of $\R^{2n}$ onto an open neighborhood of $x(t)$, for every $t\in \T$. If $u$ is an element of $\mathscr{U}(x)$, condition (iii) implies that there exists $s_0\leq 0$ such that
\begin{equation}
\label{vicinoxbar}
u(s,t) = \varphi(t,\hat{u}(s,t)), \qquad \forall (s,t)\in ]-\infty,s_0[ \times \T,
\end{equation}
for a unique $\hat{u}\in C^{\infty}( ]-\infty,s_0[ \times \T, \R^{2n} )$. The fact that $x$ is non-degenerate implies that the convergence of $u(s,t)$ to $x(t)$ for $s\rightarrow -\infty$ is exponential, together with the derivatives of every order: more precisely, there exists $\lambda>0$ and a sequence $(C_k)_{k\in \N}$ of positive numbers such that
\begin{equation}
\label{expo}
\bigl| \partial_s^h \partial_t^k \hat{u}(s,t) \bigr| \leq C_{h+k} e^{\lambda (s-s_0)}, \qquad \forall  (s,t)\in ]-\infty,s_0[ \times \T.
\end{equation}
See e.g.\ \cite[Proposition 1.21]{sal99}. 

\paragraph{The functional setting.} Let $\mathscr{X}$ be the set of all maps $u:\R^-\times \T\rightarrow T^*M$ which are in $H^2(]s_1,0[\times \T,T^*M)$ for every $s_1<0$ and for which there exists $s_0\leq 0$ such that (\ref{vicinoxbar}) holds for a (unique) $\hat{u}\in H^2( ( ]-\infty,s_0[ \times \T, \R^{2n} )$. The set $\mathscr{X}$ does not depend on the choice of the map $\varphi$.
Since maps on a two-dimensional domain of Sobolev class $H^2$ are continuous and since left-composition by smooth maps keeps the $H^2$ regularity, $\mathscr{X}$ carries the structure of a smooth Hilbert manifold modeled on $H^2(\R^-\times \T,\R^{2n})$. Indeed, any smooth map
\[
\psi: \R^- \times \T \times \R^{2n} \rightarrow T^*M
\]
such that $\psi(s,t,0)=x(t)$ for every $s$ small enough and every $t\in \T$, and such that $z \mapsto \psi(s,t,z)$ is an open embedding of $\R^{2n}$ into $T^*M$ for every $(s,t)\in \R^- \times \T$, defines the inverse of a chart by
\[
H^2(\R^-\times \T,\R^{2n}) \rightarrow \mathscr{X}, \quad \hat{u} \mapsto \psi(\cdot,\cdot,\hat{u}).
\]

\begin{rem}
\label{redatlas}
It is often convenient to choose the maps $\psi$ to be compatible with the cotangent bundle structure, that is to be of the form
\begin{equation}
\label{redphi}
\psi: \R^- \times \T \times T^* \R^n \rightarrow T^*M, \quad
\psi(s,t,(q,p)) = \bigl(f(s,t,q),(D_{q} f(s,t,q)^*)^{-1}[p]\bigr).
\end{equation}
Here
$f:\R^-\times \T\times \R^n$ is a smooth map such that 
\[
f(s,t,0)=\pi\circ x(t) \quad \mbox{and} \quad D_q f(s,t,0)^* x(t) = \hat{p}(t), \quad \mbox{for some } \hat{p}\in C^{\infty}\bigl(\T,(\R^n)^*\bigr), 
\]
for every $s$ small enough and such that $q\mapsto f(s,t,q)$ is an open embedding of $\R^n$ into $M$ for every $(s,t)\in \R^- \times \T$. In particular, $z\mapsto \psi(s,t,z)$ is a symplectic embedding for every $(s,t)\in \R^-\times \T$.
\end{rem}

Let $\mathscr{E}$ be the smooth Hilbert bundle over $\mathscr{X}$ whose fiber at $u\in \mathscr{X}$ is the space $\mathscr{E}_u$ of sections $\zeta$ of the vector bundle
\[
u^*(TT^*M) \rightarrow \R^- \times \T,
\]
which are in $H^1(]s_1,0[\times \T)$ for every $s_1<0$ and such that, if $\hat{u}\in H^2(]-\infty,s_0[\times \T,\R^{2n})$ satisfies (\ref{vicinoxbar}), then
\[
\zeta(s,t) = D_x \varphi(t, \hat{u}(s,t))[\hat\zeta(s,t)],
\]
for some $\hat\zeta \in H^1((]-\infty,s_0[\times \T,\R^{2n})$.

Finally, let $\mathscr{F}$ be the smooth Hilbert bundle over $\mathscr{X}$ whose fiber at $u\in \mathscr{X}$ is the space $\mathscr{F}_u$ of $H^{1/2}$-sections of the vector bundle
\[
(\pi\circ u(0,\cdot))^*(TM)\rightarrow \T.
\]
Let $\mathscr{T}$ be the section of the Hilbert bundle $\mathscr{E}\oplus \mathscr{F} \rightarrow \mathscr{X}$ which is defined by
\[
\mathscr{T} (u) := \bigl( \partial_s u + J(t,u) (\partial_t u - X_H(t,u)), \partial_t (\pi\circ u) (0,t) - d_p H(t,u(0,t)) \bigr).
\]
A standard argument shows that the section $\mathscr{T}$ is smooth. By (\ref{expo}), every element $u$ of $\mathscr{U}(x)$ belongs to $\mathscr{X}$, hence it is a zero of the section $\mathscr{T}$. The converse is also true, as we show in the following regularity result:

\begin{prop}
\label{regprop}
Let $u\in \mathscr{X}$ be a zero of the section $\mathscr{T}$. Then $u$ belongs to $\mathscr{U}(x)$.
\end{prop}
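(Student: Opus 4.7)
The plan is to verify each of the four defining conditions of $\mathscr{U}(x)$ for a zero $u \in \mathscr{X}$ of $\mathscr{T}$. Conditions (i) and (iv) are built into the equation $\mathscr{T}(u)=0$ once smoothness is known, so the real work splits into three pieces: interior regularity, boundary regularity up to $\{0\}\times\T$, and the asymptotic/energy properties. The first is routine. Since $u \in H^2_{\mathrm{loc}} \subset C^0$ on compact subsets of the open half-cylinder, the Floer equation (\ref{cr}) reads, in any local chart of $T^*M$, as a nonlinear Cauchy--Riemann system with smooth coefficients depending on $u$, so the standard interior bootstrap for $\delbar_J$ (cf.\ the regularity theory of McDuff--Salamon) gives $u \in C^\infty$ on $]-\infty,0[\,\times\T$.

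The main obstacle is the boundary regularity. I will work in a cotangent-lift chart $\psi$ of the form (\ref{redphi}) from Remark \ref{redatlas} around $u(0,\cdot)$; in such symplectic coordinates $u$ reads as $(q,p)$ and the Floer equation becomes a perturbation of the standard Cauchy--Riemann system. Condition (iv) becomes the first-order tangential condition $\partial_t q(0,t) = d_p H(t,u(0,t))$, and substituting this into the $p$-component of the Floer equation restricted to $\{0\}\times\T$ yields the complementary normal relation $\partial_s p(0,t) = 0$. These two conditions together form exactly the boundary value problem whose linearization at $u$ is the Fredholm operator $T$ of Theorem \ref{fredholm}, and the a priori estimate (\ref{final}) from the proof of Lemma \ref{semi} provides the elliptic gain needed to bootstrap. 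Concretely, assuming $u \in H^k$ up to the boundary, I differentiate the nonlinear equation and apply the linear estimate to the difference quotients (or to tangential derivatives in $t$, followed by using the equation to read off normal derivatives): the traces of $\partial_s p$ and $\partial_t q$ on $\{0\}\times\T$ gain one derivative by what was just said, and (\ref{final}) upgrades $u$ to $H^{k+1}$. Iterating yields $u \in C^\infty$ up to the boundary. The reason one needs this linear input, rather than a simple reflection argument, is that the boundary condition is not of Lagrangian (totally real) type, so the usual Schwarz-reflection trick for $J$-holomorphic curves does not apply.

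Once $u$ is smooth, conditions (iii) and (ii) follow by standard means. Since $u \in \mathscr{X}$, the representative $\hat u$ lies in $H^2(]-\infty,s_0[\times\T,\R^{2n})$, so $\int_\T |\hat u(s,\cdot)|^2\, dt \to 0$ along some sequence $s \to -\infty$. Translating in $s$ and using the $C^\infty_{\mathrm{loc}}$-compactness for Floer half-cylinders, together with the non-degeneracy of $x$, one obtains full uniform convergence $u(s,\cdot) \to x$ and indeed the exponential estimate (\ref{expo}) by the well-known asymptotic analysis of Floer trajectories (e.g.\ Salamon \cite{sal99}, Proposition 1.21). This gives condition (iii), and then the energy identity
\[
E(u) = -\int_{-\infty}^0 \frac{d}{ds} \mathbb{A}(u(s,\cdot))\, ds = \mathbb{A}(x) - \mathbb{A}(u(0,\cdot)),
\]
which is valid for smooth solutions of (\ref{cr}) with this asymptotic behavior, yields the finite-energy condition (ii) since the right-hand side is a difference of two finite numbers. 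This completes the reduction of the proposition to the boundary bootstrap, which is the step where Section 1 pays its dues.
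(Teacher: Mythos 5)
Your reduction to boundary regularity is sound, and the observation that (iv) together with the $p$-component of the Floer equation yields $\partial_s p(0,t)=0$ is correct. However, your final remark dismissing a reduction to Lagrangian (totally real) boundary conditions is factually wrong, and it causes you to miss the key trick of the paper's proof. The paper does \emph{not} bootstrap via the Fredholm estimate (\ref{final}): instead, it first uses (iv) to observe that the trace $\hat u(0,\cdot)\in H^{3/2}$ forces $\pi\circ\hat u(0,\cdot)\in H^{5/2}$ (the boundary condition trades a $t$-derivative of $q$ for an algebraic expression in $\hat u(0,\cdot)$, gaining one derivative in the $q$-component). One then chooses a lifting $v\in H^3$ of this boundary value with vanishing $p$-component, and the difference $w=\hat u - v$ satisfies an inhomogeneous Cauchy--Riemann equation with the genuinely Lagrangian boundary condition $w(0,t)\in(0)\times(\R^n)^*$. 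At that point the \emph{standard} regularity theory for totally real boundary conditions applies, and the bootstrap iterates cleanly. So the boundary condition ``half of Hamilton's equations'' is not Lagrangian, but after this normalization step the residual problem is.

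This matters for your proposed alternative as well. You want to gain regularity by applying the a priori estimate (\ref{final}) to difference quotients, but (\ref{final}) is only established on the scale $H^2\to H^1\times H^{1/2}$; a bootstrap of the type you sketch needs the analogous estimate on $H^{k+1}\to H^k\times H^{k-1/2}$ for all $k$, with control on how the coefficients enter, and none of this is proven in the paper. Moreover, differentiating the nonlinear boundary condition in $t$ produces a boundary operator involving $\partial_{tt}(\pi\circ u)(0,\cdot)$, which no longer fits the template of $\Sigma_{\mathscr U}$, so ``apply $T$ to $\partial_t u$'' is not literally available. Your route might be salvageable with extra work, but as written it has a genuine gap, whereas the paper sidesteps all of it via the Lagrangian reduction. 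Finally, for conditions (ii) and (iii) you reprove more than is needed: by the definition of $\mathscr X$, the representative $\hat u$ already lies in $H^2(]-\infty,s_0[\times\T,\R^{2n})$, which immediately gives uniform convergence to $x$ and finiteness of the energy without invoking $C^\infty_{\mathrm{loc}}$-compactness of Floer half-cylinders.
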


\begin{proof}
All the elements of $\mathscr{X}$ have finite energy and converge uniformly to $x$ for $s\rightarrow -\infty$, so $u$ satisfies (ii) and (iii). A standard elliptic inner regularity argument implies that $u$ is smooth in $]-\infty,0[ \times \T$ and that it is a classical solution of (\ref{cr}), so also (i) holds. There remains to show that $u$ is smooth up to the boundary, so that (iv) holds in the classical sense.

Consider a map $\psi$ as in Remark \ref{redatlas} such that $u$ belongs to the domain of the corresponding chart on $\mathscr{X}$, and define the map $\hat{u}:\R^- \times \T \rightarrow T^* \R^n$ by
\[
u(s,t) = \psi\bigl(s,t,\hat{u}(s,t) \bigr).
\]
Then the map $\hat{u}$ belongs to 
\[
\Bigl( (0,\hat{p}) + H^2(\R^-\times \T,T^* \R^n) \Bigr) \cap C^{\infty}(]-\infty,0[\times \T,T^* \R^n)
\]
and solves the equations
\begin{eqnarray}
\label{crloc}
\partial_s \hat{u}  + \hat{J}(s,t,\hat{u}) \bigl( \partial_t \hat{u} - X_{\hat{H}}(s,t,\hat{u}) \bigr) = 0,  \\
\label{bdryloc}
\partial_t (\pi\circ \hat{u}) (0,t) = d_p \hat{H}\bigl(0,t,\hat{u}(0,t)\bigr). 
\end{eqnarray}
Here 
\[
\hat{J}\in C^{\infty}( \R^- \times \T \times \R^{2n} , \mathrm{L}(T^*\R^n)) \quad \mbox{and} \quad 
\hat{H}\in C^{\infty}(\R^- \times \T \times T^*\R^n) 
\]
are the $\omega_0$-compatible almost complex structure and the Hamiltonian on $T^*\R^n$ which are obtained as pull-backs of $J$ and $H$ by $\psi$. We must prove that $\hat{u}$ is smooth up to the boundary. 
Being the trace of $\hat{u}$, the loop $t\rightarrow \hat{u}(0,t)$ is in $H^{3/2}(\T,T^*\R^n)$, so (\ref{bdryloc}) implies that $\pi\circ \hat{u}(0,\cdot)$ is in $H^{5/2}(\T,\R^n)$. Therefore, we can find a map $v\in H^3(]-1,0[\times \T,T^* \R^n)$ such that 
\[
v(0,t) = (\pi\circ u(0,t),0), \qquad \forall t\in \T.
\]
The map $w:= \hat{u}-v$ solves the problem
\begin{eqnarray}
\label{crw}
\partial_s  w + \hat{J}(s,t,\hat{u}) \partial_t w &=&  - \partial_s v - \hat{J}(s,t,\hat{u}) (\partial_t v - X_{\hat{H}}(s,t,\hat{u})), \\
\label{bdryw}
w(0,t) &\in & (0)\times (\R^n)^*.
\end{eqnarray}
The right-hand side of (\ref{crw}) belongs to $H^2(]-1,0[\times \T,T^* \R^n)$. Moreover, the linear subspace $(0)\times (\R^n)^*$ is Lagrangian for $\omega_0$, hence totally real for all the $\omega_0$-compatible complex structures $\hat{J}(s,t,\hat{u}(s,t))$. Therefore, $w$ solves a linear inhomogeneous first order Cauchy-Riemann equation with $H^2$ coefficients, $H^2$ right-hand side and totally real boundary conditions. This implies that $w$ belongs to $H^3(\R^-\times \T,T^* \R^n)$,
and so does $\hat{u}$.
Bootstrapping this argument, we obtain that $\hat{u}$ is smooth up to the boundary.
\end{proof}

\begin{prop}
If $u\in \mathscr{X}$ is a zero of the section $\mathscr{T}$, then the fiber-wise differential of $\mathscr{T}$ at $u$ is a Fredholm operator of index $\mu_{CZ}(x)$.
\end{prop}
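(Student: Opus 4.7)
The plan is to trivialize the bundle $u^*(TT^*M) \to \R^-\times\T$ by a vertical-preserving symplectic trivialization extending the one used to define $\mu_{CZ}(x)$, and to reduce the assertion to Theorem~\ref{fredholm}. Since $M$ is orientable and $u(s,\cdot)\to x$ uniformly as $s\to -\infty$, the vector bundle $(\pi\circ u)^*(TM)$ over the half-cylinder is trivial, and one can choose a smooth trivialization that agrees asymptotically with a fixed trivialization of $(\pi\circ x)^*(TM)$. This lifts to a smooth family of symplectic isomorphisms
\[
\Theta(s,t) : (T^*\R^n,\omega_0) \to (T_{u(s,t)}T^*M,\omega_{u(s,t)})
\]
satisfying the vertical-preserving condition $\Theta(s,t)((0)\times(\R^n)^*) = \ker D\pi(u(s,t))$ for every $(s,t)$, and converging, together with all derivatives, to the vertical-preserving symplectic trivialization $\Theta_\infty(t)$ along $x$ entering the definition of $\mu_{CZ}(x)$. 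The exponential decay (\ref{expo}) of $u$ towards $x$ allows one to choose $\Theta$ so that $\Theta-\Theta_\infty$ decays exponentially together with all its derivatives.

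Using charts at $u$ modeled on $\Theta$ and compatible with the cotangent bundle structure as in Remark~\ref{redatlas}, the fiber-wise differential of $\mathscr{T}$ at $u$ is represented by a bounded linear operator
\[
T : H^2(\R^-\times\T,\R^{2n}) \to H^1(\R^-\times\T,\R^{2n}) \times H^{1/2}(\T,\R^n)
\]
of the form
\[
T\zeta = \bigl( \partial_s\zeta + J(s,t)\partial_t\zeta - J(s,t)A(s,t)\zeta,\; \partial_t(\pi\circ\zeta)(0,\cdot) - \alpha(\cdot)\zeta(0,\cdot) \bigr),
\]
where $J(s,t)$ is a Lipschitz family of $\omega_0$-compatible complex structures on $\R^{2n}$, $A : \R^-\times\T \to \mathrm{L}(\R^{2n})$ is bounded and Lipschitz, and $\alpha : \T \to \mathrm{L}(\R^{2n},\R^n)$ is Lipschitz. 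The vertical-preserving property of $\Theta$ is precisely what guarantees that in the trivialization the bundle projection $\pi$ corresponds to the linear projection $(q,p)\mapsto q$ on $\R^{2n}$, so that the boundary component of $T$ takes exactly the form required by Theorem~\ref{fredholm}.

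Finally, the exponential convergence of $u$ to $x$ and of $\Theta$ to $\Theta_\infty$, together with the smoothness of $H$ and $J$, yields the asymptotic regularity hypotheses (\ref{regJ}) and (\ref{regA}) in Theorem~\ref{fredholm}. The limit $A(-\infty,t) = A_\infty(t)$ lies in $\mathrm{sp}(\R^{2n},\omega_0)$ because the linearized Hamiltonian flow preserves $\omega_0$, and by construction the fundamental solution of $W'(t)=A_\infty(t)W(t)$, $W(0)=I$, is the symplectic path representing the linearized Hamiltonian flow along $x$ read through the vertical-preserving trivialization $\Theta_\infty$. Non-degeneracy of $x$ translates to non-degeneracy of $W$, and $\mu_{CZ}(W) = \mu_{CZ}(x)$ by the very definition of the Conley-Zehnder index of the orbit. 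Theorem~\ref{fredholm} then yields that $T$, hence the fiber-wise differential of $\mathscr{T}$ at $u$, is Fredholm of index $\mu_{CZ}(x)$. The only point where genuine care is required is the construction of the trivialization $\Theta$ with both the vertical-preserving property (needed to cast the boundary operator in the form of Theorem~\ref{fredholm}) and the correct asymptotic behavior; everything else reduces to bookkeeping inside a chart and a direct application of the linear theory developed in Section~1.
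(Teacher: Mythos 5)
Your proof is correct and follows essentially the same route as the paper: trivialize the bundle via a vertical-preserving, cotangent-lift-compatible chart as in Remark~\ref{redatlas} that agrees asymptotically with the trivialization defining $\mu_{CZ}(x)$, observe that the fiberwise differential of $\mathscr{T}$ is then conjugated to an operator $T_{A,\alpha}$ satisfying the hypotheses of Theorem~\ref{fredholm} with $A(-\infty,\cdot)$ generating a symplectic path whose Conley-Zehnder index equals $\mu_{CZ}(x)$, and conclude. The paper spells out the explicit formulas for $J$, $A$ and $\alpha$ in the chart, but your outline identifies the same reduction and the same key points (the vertical-preserving property making $\pi$ linear in the chart, the exponential decay giving (\ref{regJ})--(\ref{regA}), and the well-definedness of $\mu_{CZ}(x)$ with respect to vertical-preserving trivializations).
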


\begin{proof}
By Proposition \ref{regprop}, $u$ is an element of the reduced unstable manifold $\mathscr{U}(x)$. We can use the inverse $\psi$ of a chart which belongs to the atlas described in Remark \ref{redatlas} to trivialize the section $\mathscr{T}$ locally near $u$. In such a way, we identify $u$ with a zero $\hat{u}\in H^2(\R^-\times \T,\R^{2n})$ of a a map
\[
\widehat{\mathscr{T}} : H^2(\R^- \times \T,\R^{2n}) \rightarrow H^1(\R^- \times \T,\R^{2n}) \times H^{1/2}(\T,\R^n),
\]
of the form
\[
\widehat{\mathscr{T}} (v) = \bigl( \partial_s v + \hat{J}(\cdot,\cdot,v) (\partial_t v - X_{\hat{H}} (\cdot,\cdot,v)), \partial_t \pi v (0,\cdot) - \partial_p \hat{H}(0,\cdot,v(0,\cdot)) \bigr),
\]
where $\pi:\R^{2n}\rightarrow \R^n$ denotes the projection onto the first factor.
Here
\[
\hat{J}\in C^{\infty}( \R^- \times \T \times \R^{2n} , \mathrm{L}(\R^{2n})) \quad \mbox{and} \quad 
\hat{H}\in C^{\infty}(\R^- \times \T \times \R^{2n}) 
\]
do not depend on $s$ for $s$ small enough, and $\hat{J}(s,t,x)$ is $\omega_0$-compatible for every $(s,t,x)\in \R^-\times \T \times \R^{2n}$. The fiber-wise derivative of $\mathscr{T}$ at $u$ is conjugated to the differential of $\widehat{\mathscr{T}}$ at $\hat{u}$, which has the form
\[
D\widehat{\mathscr{T}}(\hat{u})[v] = \bigl( \delbar v - J A v, \partial_t \pi v (0,\cdot) - \alpha v(0,\cdot) \bigr),
\]
where $J,A\in C^{\infty}(\R^-\times \T,\mathrm{L}(\R^{2n}))$ and $\alpha\in C^{\infty}(\T,\mathrm{L}(\R^{2n},\R^n))$ are defined as
\begin{eqnarray*}
J(s,t) & := & \hat{J}\bigl(s,t,\hat{u}(s,t) \bigr), \\
A(s,t) z & := & \hat{J}(s,t,\hat{u}) D_x \hat{J}(s,t,\hat{u}) [z] \bigl( \partial_t \hat{u} - X_{\hat{H}} (s,t,\hat{u}) \bigr) + D_x X_{\hat{H}} (s,t,\hat{u})[z], \\
\alpha(t) z & := & D_x \partial_p \hat{H} (0,t,\hat{u}(0,t))[z],
\end{eqnarray*}
for every $z\in \R^{2n}$. By using the fact that $\hat{u}$ is a zero of $\widehat{\mathscr{T}}$ and by differentiating the identity $\hat{J}(s,t,z)^2=-I$ with respect to $z\in \R^{2n}$, we can simplify the expression for $A$ and get
\[
A(s,t) z = D_x \hat{J}(s,t,\hat{u}) [z] \partial_s \hat{u} + D_x X_{\hat{H}} (s,t,\hat{u})[z], \qquad \forall z\in \R^{2n}.
\]
By (\ref{expo}), $\hat{u}(s,t)\rightarrow 0$ for $s\rightarrow -\infty$, exponentially with all its detivatives, so the smooth maps $J$ and $A$ extend continuously to $[-\infty,0]\times \T$ by setting
\[
J(-\infty,t) := \hat{J}\bigl(s,t,(0,\hat{p}(t))\bigr), \qquad A(-\infty,t) := D_x X_{\hat{H}}\bigl(s,t,(0,\hat{p}(t))\bigr),
\]
for $s$ small enough since, as we have noticed, $\hat{J}$ and $\hat{H}$ do not depend on $s$ for $s$ small enough. Moreoever, $J$ and $A$ are globally Lipschitz and conditions (\ref{regJ}) and (\ref{regA}) hold.

Since $H(t,\psi(s,t,z))=\hat{H}(s,t,z)$, and $x(t)=\psi(s,t,(0,\hat{p}(t)))$ for $s$ small enough, the fundamental solution $W$ of linear Hamiltonian system
\[
W'(t) = A(-\infty,t) W(t), \qquad W(0)=I,
\]
is conjugated to the differential of the Hamiltonian flow of $H$ along $x$ by the 1-periodic path of linear symplectic isomorphisms $t\mapsto D_x \psi(s,t,(0,\hat{p}(t)))$, which are vertical-preserving (again, for $s$ small enough). Therefore, the Conley-Zehnder index of $W$ equals the Conley-Zehnder index of $x$. By Theorem \ref{fredholm}, we conclude that $D\widehat{\mathscr{T}}(\hat{u})$ is a Fredholm operator of index $\mu_{CZ}(x)$, as claimed.
\end{proof}

\paragraph{Transversality.} Let $g$ be a Riemannian metric on $M$ and let $J_g$ be the corresponding Levi-Civita almost complex structure on $T^*M$, which in the horizontal-vertical splitting of $TT^*M$ induced by $g$,
\[
TT^*M = T^hT^*M \oplus T^v T^*M,
\]
takes the form 
\[
J_g = \left( \begin{array}{cc} 0 & -I \\ I & 0 \end{array} \right),
\]
where we are using the identifications
\[
T_x^h T^*M \cong T_x M \cong T_x^* M, \qquad T_x^v T^*M \cong T_x^* M,  
\]
given by the metric $g$. This almost complex structure is $\omega$-compatible.
We denote by $\mathscr{J}(g)$ the set of  $\omega$-compatible time-periodic smooth almost complex structures $J$ such that $\|J-J_g\|_{\infty}<+\infty$. The set $\mathscr{J}(g)$ has a natural structure of complete metric space (see \cite[Section 1.6]{as06}). As usual, a subset of $\mathscr{J}(g)$ is said to be generic if its complement is contained in a countable union of closed sets with empty interior. The proof of the next result is standard (see \cite{fhs95}). 

\begin{prop}
\label{trans}
For a generic choice of $J$ in $\mathscr{J}(g)$, the section $\mathscr{T}$ is transverse to the zero-section, except possibly at the stationary solution $u_0(s,t)=x(t)$. 
\end{prop}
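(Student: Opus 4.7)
I would follow the standard Floer-Hofer-Salamon universal transversality scheme, adapted to the half-cylinder with the mixed boundary condition. First replace $\mathscr{J}(g)$ by a separable Banach manifold $\mathscr{J}^{\varepsilon}(g)$ of $C^{\varepsilon}$-regular perturbations of a background almost complex structure (Floer's $C^{\varepsilon}$-space), and form the universal section
\[
\widetilde{\mathscr{T}} : \mathscr{X}\times \mathscr{J}^{\varepsilon}(g) \rightarrow \mathscr{E}\oplus \mathscr{F}, \qquad (u,J) \mapsto \mathscr{T}_J(u).
\]
If $D\widetilde{\mathscr{T}}(u,J)$ is surjective at every zero $(u,J)$ with $u\neq u_0$, then $\widetilde{\mathscr{T}}^{-1}(0)\setminus\{(u_0,\cdot)\}$ is a smooth Banach submanifold, the projection to $\mathscr{J}^{\varepsilon}(g)$ is Fredholm (its vertical differential being the operator of the preceding Fredholm proposition, equivalent via Theorem \ref{fredholm}), and Sard-Smale gives a residual set of regular $J$'s in $\mathscr{J}^{\varepsilon}(g)$. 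A standard Taubes-type argument then promotes residuality from $\mathscr{J}^{\varepsilon}(g)$ to $\mathscr{J}(g)$.

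The core task is thus surjectivity of the universal linearization at non-stationary zeros. Let $(v,\eta)\in H^1(\R^-\times\T,\R^{2n}) \times H^{1/2}(\T,\R^n)$ lie in the $L^2$-cokernel of $D\widetilde{\mathscr{T}}(u,J)$. Integration by parts against variations in the $\mathscr{X}$-direction (using the local trivialization of Remark \ref{redatlas} and the form of $D\widehat{\mathscr{T}}(\hat u)$ computed previously) yields that $v$ satisfies an adjoint linear Cauchy-Riemann equation $L^* v = 0$ in the interior of $\R^-\times\T$, together with a boundary identity $v(0,t) = \pi^T \eta'(t) + \alpha(t)^T \eta(t)$, where $\alpha(t)= D_x \partial_p H(0,t,u(0,t))$. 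Orthogonality to $J$-variations gives
\[
\int_{\R^-\times\T} \langle \delta J \cdot (\partial_t u - X_H)(s,t),\; v(s,t)\rangle\,ds\,dt = 0
\]
for every admissible perturbation $\delta J$. Since $u\neq u_0$, the open set $\Omega := \{(s,t)\in \R^-\times\T : \partial_t u(s,t) \neq X_H(t,u(s,t))\}$ is non-empty. The classical linear-algebra lemma that $\{Y\xi : Y \in T_J \mathscr{J}(\omega)\}$ equals the $g_J$-orthogonal complement of $\Span\{\xi, J\xi\}$ for any $\xi\neq 0$, combined with bump-function cutoffs in $(s,t)$, forces $v(s,t) \in \Span\{(\partial_t u - X_H)(s,t),\, J(\partial_t u - X_H)(s,t)\}$ pointwise on $\Omega$; varying the base point and invoking unique continuation for the elliptic equation $L^*v = 0$ then yields $v\equiv 0$ on all of $\R^-\times\T$.

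With $v\equiv 0$, the trace $v(0,\cdot) = 0$ reduces the boundary identity to $\pi^T \eta' + \alpha^T \eta = 0$, which splits as $\eta' + \alpha_q^T \eta = 0$ and $\alpha_p^T \eta = 0$, where $\alpha = (\alpha_q,\alpha_p)$ and $\alpha_p(t) = d_{pp} H(t,u(0,t))$. Under the fiberwise positivity assumption (\ref{positivo}), $\alpha_p(t)$ is positive definite (it is the Hessian of a convex function, hence symmetric and positive), so the second constraint forces $\eta\equiv 0$. Therefore $\coker D\widetilde{\mathscr{T}}(u,J) = 0$ at every non-stationary zero, completing the surjectivity argument. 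The main obstacle, relative to the closed-cylinder FHS argument, is precisely this extra boundary component $\eta$ of the cokernel: the non-Lagrangian boundary condition contributes an ODE on $\eta$ that interior $J$-perturbations cannot reach, and it is the fiberwise convexity of $H$ that provides the algebraic mechanism eliminating it.
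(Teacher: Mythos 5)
Your proposal follows the paper's own route — the paper's entire ``proof'' consists of the sentence ``the proof is standard (see \cite{fhs95})'' — and you correctly identify the one genuinely non-routine point relative to [FHS95]: the second, boundary, component of the cokernel. Since almost complex perturbations $\delta J$ land only in the $\mathscr{E}$-summand of the target (they enter the first component $\partial_s u + J(\partial_t u - X_H)$ and not the boundary component $\partial_t(\pi u)(0,\cdot) - d_pH(t,u(0,\cdot))$), a cokernel element of the form $(0,\eta)$ is invisible to the universal linearization in the $J$-directions, and this is exactly the difficulty you name. Killing $v$ on $\Omega$ by local $J$-perturbations and propagating $v\equiv 0$ by unique continuation is standard; deriving the two boundary constraints $\eta' + \alpha_q^T\eta = 0$ and $\alpha_p^T\eta = 0$ from $v(0,\cdot)=0$ is correct, and this is the point the paper glosses over.

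However, the last step has a scope mismatch you should fix. You invoke (\ref{positivo}), which asserts $d_{pp}H(t,x(t))>0$, i.e.\ convexity \emph{along the periodic orbit $x$}. But the matrix $\alpha_p(t)$ appearing in your constraint is $d_{pp}H(t,u(0,t))$, evaluated at the boundary trace of the (generic, non-stationary) element $u\in\mathscr{U}(x)$, which need not be close to $x$. Condition (\ref{positivo}) therefore does not directly give you $\alpha_p(t)>0$. What you actually need is the global fiberwise uniform convexity (\ref{uncon}), $d_{pp}H>0$ everywhere, which is indeed what the paper assumes in the section where $\Psi$ is defined and where Proposition~\ref{trans} is put to work — but it is not listed among the hypotheses of Proposition~\ref{trans} itself, nor is (\ref{positivo}). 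Without some convexity input, the constrained ODE $\eta'=-\alpha_q^T\eta$, $\alpha_p^T\eta=0$ can admit nonzero periodic solutions (as the model computation in Lemma~\ref{compu1} with $\alpha=0$, $a>0$ shows: there $v\equiv 0$ but $g=\mathrm{const}\neq 0$ lies in the cokernel), so the proposition as literally stated is weaker than its proof supports. The cleanest fix is to replace the reference to (\ref{positivo}) by (\ref{uncon}) and to note that Proposition~\ref{trans} is being used in the paper only under that stronger standing hypothesis.
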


Since the stationary solution $u_0$ belongs to $\mathscr{U}(x)$ also when the Fredholm index $\mu_{CZ}(x)$ of the fiberwise differential of the section $\mathscr{T}$, is negative, in general we cannot expect transversality at $u_0$. However, we shall prove that transversality at $u_0$ holds if
\begin{equation}
\label{pos}
d_{pp} H(t,x(t)) > 0 , \quad \forall t\in \T.
\end{equation}
Under this assumption, the Legendre transform
\begin{equation}
\label{legendre}
\mathscr{L} : \T \times T^*M \rightarrow \T \times TM, \qquad \mathscr{L}(t,q,p) \mapsto \bigl(t,q,d_p H(t,q,p)\bigr),
\end{equation}
maps a neighborhood of $\set{(t,x(t))}{t\in \T}$ diffeomorphically onto a neighborhood of 
\begin{equation}
\label{tqq}
\set{(t,\pi\circ x(t),(\pi\circ x)'(t))}{t\in \T}, 
\end{equation}
and the identity
\begin{equation}
\label{legtra}
L(t,q,v) + H(t,q,p) = \langle p, v \rangle, \qquad \mbox{for } (t,q,v) = \mathscr{L}(t,q,p),
\end{equation}
defines a smooth Lagrangian $L$ on a neighborhood of the set (\ref{tqq}). Here $\langle \cdot, \cdot \rangle$ denotes the duality pairing. The Legendre transform is involutive, meaning that
\begin{equation}
\label{inv}
\mathscr{L}^{-1} (t,q,v) = \bigl(t,q,d_v L(t,q,v) \bigr),
\end{equation}
for every $(t,q,v)$ in a neighborhood of the set (\ref{tqq}).
The Lagrangian action functional
\begin{equation}
\label{laf}
\mathbb{S}: C^{\infty}(\T,M) \rightarrow \R, \qquad \mathbb{S}(q) := \int_{\T} L\bigl(t,q(t),q'(t)\bigr)\, dt,
\end{equation}
is well-defined in a $C^1$-neighborhood of $\pi\circ x$. Then $\pi\circ x$ is a critical point of $\mathbb{S}$, and it is well-known that the Morse index $\ind(\pi\circ x;\mathbb{S})$ of $\pi\circ x$ with respect to $\mathbb{S}$, that is the dimension of any maximal subspace of the space of smooth sections of $(\pi\circ x)^*(TM)$ on which $d^2 \mathbb{S}(\pi\circ x)$ is negative-definite, coincides with the Conley-Zehnder index $\mu_{CZ}(x)$ of $x$, which is therefore non-negative (see e.g.\ \cite[Corollary 4.2]{aps08} for a proof of this fact which uses the sign conventions adopted also here). The following ``automatic transversality result'' is based on the properties of the Legendre transform.  

\begin{lem}
\label{auttrans}
Assume that the Hamiltonian $H$ satisfies (\ref{pos}) and let $u_0(s,t)=x(t)$ be the stationary solution. Then:
\begin{enumerate}
\item the fiberwise differential 
\[
D^f \mathscr{T}(u_0): T_{u_0} \mathscr{X} \rightarrow \mathscr{E}_{u_0} \oplus \mathscr{F}_{u_0}
\]
is surjective;
\item if $u\in T_{u_0} \mathscr{X}\setminus (0)$ belongs to the kernel of $D^f \mathscr{T}(u_0)$ and $\xi$ is the smooth section of the vector bundle $(\pi\circ x)^*(TM)$ defined by
\[
\xi(t) := D\pi (x(t)) [ u(0,t)],
\]
then $d^2 \mathbb{S}(\pi\circ x)[\xi]^2< 0$.
\end{enumerate}
\end{lem}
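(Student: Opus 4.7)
The plan is to prove part (ii) by a direct energy computation exploiting the fact that the Floer equation is the negative $L^2$-gradient flow of $\mathbb{A}$, and then to deduce (i) from the Fredholm index equality proved in the preceding proposition.

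\textbf{Legendre-theoretic setup.} The boundary condition in the definition of $\mathscr{U}(x)$ asserts that $u(0,\cdot)$ lies in the submanifold of ``half-solutions'' of Hamilton's equations,
\[
\mathscr{B} := \bigl\{ \gamma \in C^{\infty}(\T, T^*M) \bigm| \partial_t(\pi\circ\gamma)(t) = d_p H(t, \gamma(t))\ \forall t \in \T \bigr\}.
\]
The hypothesis $d_{pp}H(t,x(t)) > 0$ allows the implicit function theorem to be applied: $\mathscr{B}$ is a smooth submanifold of the loop space near $x$, the projection $\pi_* : \mathscr{B} \to C^{\infty}(\T, M)$ is a local diffeomorphism at $x$ with inverse $q \mapsto (q, d_v L(t, q, q'))$, and the Legendre identity $p \cdot v = H(t, q, p) + L(t, q, v)$ applied with $v = d_p H = q'$ yields
\[
\mathbb{A}|_{\mathscr{B}} = \mathbb{S} \circ \pi_*.
\]
Since $x$ is a critical point of $\mathbb{A}$, this gives the identification of Hessians
\[
d^2\mathbb{A}(x)[v, v] = d^2\mathbb{S}(\gamma_0)[\pi_* v, \pi_* v] \qquad \forall v \in T_x\mathscr{B}, \quad \gamma_0 := \pi \circ x.
\]

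\textbf{The core identity.} Let $u \in \ker D^f\mathscr{T}(u_0)$; the linearized boundary condition says precisely $u(0,\cdot) \in T_x\mathscr{B}$ with $\pi_* u(0,\cdot) = \xi$. The heart of the argument is the formula
\[
\frac{d}{ds}\,d^2\mathbb{A}(x)[u(s,\cdot), u(s,\cdot)] = -2\,\|\partial_s u(s,\cdot)\|^2_{L^2(\T)}.
\]
To prove it, write (in a symplectic trivialization of $x^* T T^*M$) the symmetric form
\[
d^2\mathbb{A}(x)[v, w] = \int_{\T}\bigl(\omega(v, w') - d^2H(t, x)[v, w]\bigr)\, dt,
\]
differentiate in $s$, and substitute the linearized Floer equation $\partial_t u = D_x X_H(t, x) u + J \partial_s u$ (which uses that $u_0$ is stationary, so no $D_x J$ terms appear). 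The relation $\omega(v, D_x X_H \cdot w) = d^2 H(t,x)[v, w]$ makes the two $d^2 H$-contributions cancel, leaving only $\omega(\partial_s u, J \partial_s u) = -|\partial_s u|_J^2$ by $\omega$-compatibility of $J$.

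\textbf{Proof of (ii).} Integrating the core identity from $s = -\infty$ to $s = 0$, and using the exponential decay of $u(s, \cdot)$ as $s \to -\infty$ (standard for solutions of a linear asymptotically translation-invariant Cauchy-Riemann operator with non-degenerate limit, parallel to the estimate (\ref{expo})), we obtain
\[
d^2\mathbb{S}(\gamma_0)[\xi]^2 = d^2\mathbb{A}(x)[u(0,\cdot), u(0,\cdot)] = -2 \int_{\R^- \times \T} |\partial_s u|_J^2\, ds\, dt \leq 0.
\]
If $\partial_s u \equiv 0$ then $u(s, t) = w(t)$ is $s$-independent and the decay as $s \to -\infty$ forces $u \equiv 0$, contradicting $u \neq 0$. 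Hence $d^2\mathbb{S}(\gamma_0)[\xi]^2 < 0$.

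\textbf{Proof of (i).} The same identity shows that if $\xi = 0$ then $\partial_s u \equiv 0$, hence $u \equiv 0$; so $u \mapsto \xi$ is injective on $\ker D^f\mathscr{T}(u_0)$ and takes values in the negative definite subspace of $d^2\mathbb{S}(\gamma_0)$, whose dimension equals the Morse index $\ind(\gamma_0; \mathbb{S}) = \mu_{CZ}(x)$. Since the preceding proposition gives Fredholm index $\mu_{CZ}(x)$,
\[
\dim \coker D^f\mathscr{T}(u_0) = \dim \ker D^f\mathscr{T}(u_0) - \mu_{CZ}(x) \leq 0,
\]
yielding surjectivity.

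\textbf{Main obstacle.} The delicate point is the rigorous justification of the core identity: one needs enough regularity and decay of $u$ in the $s$-variable to differentiate under the integral over $\T$, integrate by parts in $t$, and pass to the limit $s \to -\infty$ with the boundary term vanishing. The elliptic boundary regularity established earlier in this section, combined with the exponential decay (\ref{expo}) available for the linearization at the non-degenerate asymptotic $x$, supplies exactly what is needed.
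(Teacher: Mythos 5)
Your proof is correct and follows essentially the same strategy as the paper's: use the Legendre transform to identify $d^2\mathbb{A}(x)$ restricted to the linearized boundary condition with $d^2\mathbb{S}(\pi\circ x)$, exploit the $L^2$-gradient structure of the linearized Floer equation to show that $d^2\mathbb{A}(x)[u(0,\cdot)]^2<0$ for a nonzero kernel element, and deduce surjectivity from the Fredholm index computation. Your ``core identity'' $\frac{d}{ds}\,d^2\mathbb{A}(x)[u(s,\cdot)]^2=-2\|\partial_s u\|^2_{L^2_J}$ is the same computation the paper performs, merely packaged one derivative earlier: the paper sets $\varphi(s)=\|u(s,\cdot)\|^2_{L^2_J}$, notes $\varphi'(s)=-2\,d^2\mathbb{A}(x)[u(s,\cdot)]^2$ and $\varphi''(s)=4\|\nabla^2\mathbb{A}(x)u(s,\cdot)\|^2\geq 0$, and concludes $\varphi'(0)>0$ by convexity plus decay; integrating your identity from $-\infty$ to $0$ is the same step. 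Likewise your abstract derivation of the Hessian identity via the half-solution manifold $\mathscr{B}$ and $\mathbb{A}|_{\mathscr{B}}=\mathbb{S}\circ\pi_*$ is the conceptual version of the paper's explicit coordinate computation with the embedding $X(q)=(q,\partial_v L(\cdot,q,q'))$. The only caveat is that making $\mathscr{B}$ a genuine Banach manifold and $\pi_*$ a local diffeomorphism requires the same implicit-function-theorem care that the explicit map $X$ handles more directly, but this is a presentational rather than a substantive difference.
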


\begin{proof}
By using a smooth map $f:\T \times \R^n \rightarrow M$ such that $f(t,0)=\pi\circ x(t)$ and $f(t,\cdot)$ is a diffeomorphism and the corresponding chart as in Remark \ref{redatlas} (but without the dependence on $s$), we can assume that $H\in C^{\infty}(\T\times \R^{2n})$ and that $x(t)=(\hat{q}(t),\hat{p}(t))$, where $\hat{q}(t)=0$ and $\hat{p}$ solves
\[
\hat{p}'(t)= - \partial_q H(t,0,\hat{p}(t)).
\]
The kernel of $D^f \mathscr{T}(x)$ consists of the smooth maps $u\in H^2(\R^-\times \T,\R^{2n})$ which solve the system
\begin{eqnarray}
\label{ddt1}
\partial_s u + J(t,x) ( \partial_t u - D_x X_H (t,x) [u]) = 0,  \qquad \mbox{on } \R^-\times \T, 
\\ \label{ddt2}
\xi'(t) = \partial_{qp} H(t,x) \xi (t) +\partial_{pp} H(t,x) \eta(t),
\ \qquad \mbox{on } \T,
\end{eqnarray}
where $u(0,t) = (\xi(t),\eta(t)) = \zeta(t)$.
If $u\in \ker D^f \mathscr{T}(x)$ is non-zero, then the equation (\ref{ddt1}) implies that
\begin{equation}
\label{neg}
d^2 \mathbb{A}(x)[\zeta]^2 < 0.
\end{equation}
In fact, (\ref{ddt1}) can be rewritten abstractly as
\[
\partial_s u + \nabla^2 \mathbb{A}(x) u = 0,
\]
where $\nabla^2 \mathbb{A}(x)$ denotes the Hessian of $\mathbb{A}$ with respect to the $L^2$-metric
\[
\langle v,w \rangle_{L^2_J} := \int_{\T}g_J( v(t),w(t)) \, dt, \qquad \forall v,w\in C^{\infty}(\T,\R^{2n}).
\]
Then the function
\[
\varphi: \R^- \rightarrow \R, \qquad \varphi(s) = \| u(s,\cdot) \|_{L^2_J}^2,
\]
satisfies
\begin{eqnarray}
\label{d1}
\varphi'(s) &=& - 2 \langle \nabla^2 \mathbb{A}(x) u(s,\cdot), u(s,\cdot) \rangle_{L^2_J} = -2 \, d^2 \mathbb{A}(x) [u(s,\cdot)]^2, \\
\label{d2}
\varphi''(s) &=& 4 \bigl\| \nabla^2 \mathbb{A}(x) u(s,\cdot) \bigr\|_{L_J^2}^2.
\end{eqnarray}
By (\ref{d2}), $\varphi$ is convex and, being infinitesimal for $s\rightarrow -\infty$ and not identically zero, it satisfies $\varphi'(0)>0$, so (\ref{neg}) follows from (\ref{d1}). 

On the other hand, we claim that the equation (\ref{ddt2}) implies that
\begin{equation}
\label{uguali}
d^2 \mathbb{A}(x) [\zeta]^2 = d^2 \mathbb{S}(\hat{q}) [\xi]^2.
\end{equation}
In fact, is $\epsilon>0$ is small enough the map
\[
X : \set{q\in C^{\infty}(\T,\R^n)}{\|q\|_{C^1}< \epsilon} \rightarrow C^{\infty}(\T,\R^{2n}), \quad X(q)(t):= \bigl(q(t),\partial_v L(t,q(t),q'(t)) \bigr),
\]
is well defined and, by the involutive property of the Legendre transform, it maps $\hat{q}=0$ into $x$. By (\ref{legtra}) and (\ref{inv}), we have
\[
\mathbb{A}(X(q)) = \int_{\T} \Bigl( \partial_v L(t,q,q')\cdot q' - H(t,q,\partial_v L(t,q,q')) \Bigr)\, dt = \int_{\T} L(t,q,q')\, dt = \mathbb{S}(q),
\]
for every $q\in C^{\infty}(\T,\R^n)$ with $\|q\|_{C^1}<\epsilon$. By differentiating this identity twice at $\hat{q}$, taking into account the fact that $\hat{q}$ is a critical point of $\mathbb{S}$ and that $x=X(\hat{q})$ is a critical point of $\mathbb{A}$, we obtain the identity
\begin{equation}
\label{ided2}
d^2 \mathbb{S}(\hat{q})[\xi]^2 = d^2 \mathbb{A}(x) \bigl[ DX(\hat{q})[\xi] \bigr]^2 = d^2 \mathbb{A}(x) \bigl[ (\xi,\partial_{qv} L(\cdot,\hat{q},\hat{q}') \xi + \partial_{vv} L(\cdot,\hat{q},\hat{q}') \xi' )\bigr]^2,
\end{equation}
for every $\xi\in C^{\infty}(\T,\R^n)$. By differentiating the identity 
\[
v = \partial_p H\bigr(t,q,\partial_v L(t,q,v) \bigr)
\]
with respect to $v$ and with respect to $q$, we obtain the formulas
\[
\partial_{pp} H(t,\hat{q},\hat{p}) \partial_{vv} L (t,\hat{q},\hat{q}') = I, \qquad
\partial_{qp} H(t,\hat{q},\hat{p}) = - \partial_{pp} H(t,\hat{q},\hat{p}) \partial_{qv} L (t,\hat{q},\hat{q}').
\]
These identities imply that $(\xi,\eta)$ satisfies (\ref{ddt2}) if and only if 
\[
\eta = \partial_{qv} L(\cdot,\hat{q},\hat{q}') \xi + \partial_{vv} L(\cdot,\hat{q},\hat{q}') \xi' ,
\]
so (\ref{ided2}) implies (\ref{uguali}), as claimed.

Claim (ii) immediately follows from (\ref{neg}) and (\ref{uguali}). 

The second differential of $\mathbb{S}$ at $\hat{q}$ extends to a continuous non-degenerate symmetric bilinear form on $H^1(\T,\R^n)$. Let $\mathscr{W}$ be a closed linear subspace of $H^1(\T,\R^n)$ on which $d^2 \mathbb{S}(\hat{q})$ is positive-definite and which has codimension $\ind (\hat{q};\mathbb{S})$.
Consider the continuous linear mapping
\[
Q : H^2(\R^-\times \T,\R^{2n}) \rightarrow H^1(\T,\R^n), \quad u \mapsto \pi u(0,\cdot),
\]
where as usual $\pi$ is the projection onto the first factor of $\R^n\times \R^n$. By (\ref{neg}) and (\ref{uguali}) we have
\[
d^2 \mathbb{S}(\hat{q}) [ Qu ]^2 < 0, \qquad \forall u\in \ker D^f \mathscr{T}(u_0) \setminus (0).
\]
It follows that $Q$ is injective on the kernel of $D^f \mathscr{T}(u_0)$ and 
\[
\bigl( Q \ker D^f \mathscr{T}(u_0) \bigr) \cap \mathscr{W} = (0).
\]
Therefore,
\[
\dim \ker D^f \mathscr{T}(u_0) = \dim \bigl( Q \ker D^f \mathscr{T}(u_0) \bigr) \leq \codim \, \mathscr{W} = \ind (\hat{q};\mathbb{S}) = \mu_{CZ}(x).
\]
Since $D^f \mathscr{T}(u_0)$ has Fredholm index $\mu_{CZ}(x)$,
the above inequality implies that $D^f \mathscr{T}(u_0)$ is surjective, concluding the proof of (i).
\end{proof} 

Proposition \ref{trans} and Lemma \ref{auttrans} (i) have the following immediate consequence:

\begin{cor}
\label{isaman}
Let $x$ be a non-degenerate 1-periodic orbit of $X_H$ at which (\ref{pos}) holds. Then for a generic choice of $J$ in $\mathscr{J}(g)$, $\mathscr{U}(x)$ is a non-empty smooth submanifold of $\mathscr{X}$ of dimension $\mu_{CZ}(x)$.
\end{cor}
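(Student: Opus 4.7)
The plan is to combine the two ingredients just established. By Proposition \ref{trans}, for $J$ in a residual (hence dense) subset of $\mathscr{J}(g)$, the section $\mathscr{T}$ is transverse to the zero section at every zero except possibly at the stationary solution $u_0(s,t)=x(t)$. The remaining obstacle is therefore transversality at $u_0$, which is precisely the content of Lemma \ref{auttrans}(i): under the hypothesis (\ref{pos}), the fiberwise differential $D^f\mathscr{T}(u_0)$ is automatically surjective, independently of any genericity condition on $J$. This is the ``hard'' analytic input, and it has already been proven by exploiting the Legendre transform to identify the kernel of $D^f\mathscr{T}(u_0)$ with a negative-definite subspace for $d^2\mathbb{S}(\pi\circ x)$, whose dimension is bounded by the Morse index $\ind(\pi\circ x;\mathbb{S})=\mu_{CZ}(x)$.

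Concretely, I would proceed as follows. First, fix $J$ in the intersection of the residual set given by Proposition \ref{trans} with the (nonempty) set $\mathscr{J}(g)$; for such $J$, transversality holds at every zero of $\mathscr{T}$ distinct from $u_0$. Second, invoke Lemma \ref{auttrans}(i) to conclude that transversality holds at $u_0$ as well. Hence $\mathscr{T}$ is transverse to the zero section of $\mathscr{E}\oplus\mathscr{F}\to\mathscr{X}$ at every point of $\mathscr{T}^{-1}(0)$. By Proposition \ref{regprop}, $\mathscr{T}^{-1}(0)$ coincides with $\mathscr{U}(x)$.

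Third, the implicit function theorem on the Hilbert manifold $\mathscr{X}$ then shows that $\mathscr{U}(x)$ is a smooth submanifold whose dimension at each point equals the Fredholm index of the fiberwise differential of $\mathscr{T}$ at that point. By the preceding proposition, this Fredholm index is $\mu_{CZ}(x)$, so $\mathscr{U}(x)$ is a smooth manifold of the claimed dimension. Finally, non-emptiness is immediate: the stationary solution $u_0(s,t)=x(t)$ satisfies conditions (i)--(iv) in the definition of $\mathscr{U}(x)$, because $x$ is itself a $1$-periodic orbit of $X_H$ and therefore solves $\partial_t(\pi\circ x)(t)=d_p H(t,x(t))$ by the Hamilton equations. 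This completes the argument; I do not foresee any additional technical subtlety, since the linear analysis (Theorem \ref{fredholm}), the regularity (Proposition \ref{regprop}), the generic transversality (Proposition \ref{trans}), and the automatic transversality at $u_0$ (Lemma \ref{auttrans}) have all already been established.
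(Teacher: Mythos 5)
Your proof is correct and follows exactly the route the paper intends: the paper presents the corollary as an immediate consequence of Proposition \ref{trans} (generic transversality away from $u_0$) and Lemma \ref{auttrans}(i) (automatic transversality at $u_0$), combined with Proposition \ref{regprop}, the Fredholm index computation, and the implicit function theorem, with non-emptiness coming from the stationary solution $u_0$.
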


\begin{rem}
\label{nonorient} We have used the orientability assumption on $M$ in order to trivialize the vector bundle $(\pi\circ x)^*(TM)$, so to have vertical-preserving trivializations of $x^*(TT^*M)$ and maps $\psi$ as in Remark \ref{redatlas}. When $M$ is not orientable and $\pi\circ x$ is an orientation-reversing loop, $x^*(TT^*M)$ admits no vertical-preserving trivialization over $\T$. However, one can consider a trivialization $V$ of $(\pi\circ x)^*(TM)$ over $[0,1]$ and use the induced trivialization $U(t)=V(t)\oplus (V(t)^*)^{-1}$ of $x^*(TT^*M)\rightarrow [0,1]$ to define the Conley-Zehnder index (in this case a Maslov index) and to read the fiberwise differential of the section $\mathscr{T}$. Actually, the more general framework of Hamiltonian systems with non-local conormal boundary conditions allows a setting under which the orientable and non-orientable loops are treated in the same way, together with more general Lagrangian intersection problems. See \cite{aps08} for a description of this setting and \cite{web02} for another approach.
\end{rem}

\paragraph{Compactness.} Under a standard assumption on $H$, the elements of the reduced unstable manifold $\mathscr{U}(x)$ satisfy uniform action and energy bounds:

\begin{lem}
\label{energy}
Assume that $H$ satisfies 
\begin{equation}
\label{actint}
a:= \inf_{(t,q,p)\in \T \times T^*M} \Bigl( \langle p, d_p H(t,q,p) \rangle - H(t,q,p) \Bigr) > -\infty.
\end{equation}
 Then every $u\in  \mathscr{U}(x)$ satisfies the action and energy bounds
\[
a \leq \mathbb{A}(u(s,\cdot)) \leq \mathbb{A}(x), \qquad
E(u) = \int_{\R^- \times \T} |\partial_s u|_J^2\, ds\, dt \leq \mathbb{A}(x) - a. 
\]
\end{lem}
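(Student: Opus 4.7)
The plan is to combine three ingredients: the fact that the Floer equation is the negative $L^2$-gradient flow of $\mathbb{A}$, the asymptotic condition at $s\to -\infty$, and a direct computation of $\mathbb{A}(u(0,\cdot))$ using the boundary condition (iv) that relates it to the integrand in (\ref{actint}).

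First, I would recall that for any smooth map $v:\T\to T^*M$ and any variation $\zeta$,
\[
d\mathbb{A}(v)[\zeta] = \int_\T \omega\bigl(\zeta, v'(t)-X_H(t,v)\bigr)\,dt = \int_\T g_J\bigl(\zeta,\, J(t,v)(v'(t)-X_H(t,v))\bigr)\,dt,
\]
so the Floer equation (\ref{cr}) says precisely that $\partial_s u = -\nabla_{L^2_J}\mathbb{A}(u(s,\cdot))$. Hence
\[
\frac{d}{ds}\mathbb{A}\bigl(u(s,\cdot)\bigr) = -\int_\T |\partial_s u(s,t)|_J^2\,dt \le 0,
\]
so $s\mapsto \mathbb{A}(u(s,\cdot))$ is nonincreasing. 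Since $u(s,\cdot)\to x$ in $C^\infty(\T,T^*M)$ as $s\to -\infty$ by the exponential decay (\ref{expo}), we obtain $\lim_{s\to -\infty}\mathbb{A}(u(s,\cdot))=\mathbb{A}(x)$, which gives the upper bound $\mathbb{A}(u(s,\cdot))\le \mathbb{A}(x)$ for all $s\le 0$.

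Next, I would evaluate $\mathbb{A}$ on the boundary loop. Writing $u(0,t)$ in local coordinates as $(q(t),p(t))$ with $q=\pi\circ u(0,\cdot)$, the boundary condition (iv) reads $q'(t)=d_pH(t,u(0,t))$. Thus, using $\int_\T u(0,\cdot)^*\lambda = \int_\T \langle p(t),q'(t)\rangle\,dt$,
\[
\mathbb{A}\bigl(u(0,\cdot)\bigr) = \int_\T \Bigl(\langle p(t), d_pH(t,u(0,t))\rangle - H(t,u(0,t))\Bigr)\,dt \ge \int_\T a\,dt = a,
\]
where the inequality is exactly (\ref{actint}). Together with the monotonicity, this gives $a\le \mathbb{A}(u(0,\cdot))\le \mathbb{A}(u(s,\cdot))\le \mathbb{A}(x)$ for every $s\le 0$, which is the desired action bound (this argument also shows a posteriori that $\mathbb{A}(u(s,\cdot))$ is finite for all $s$).

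Finally, integrating the gradient identity from $-\infty$ to $0$,
\[
E(u) = \int_{\R^-\times\T}|\partial_s u|_J^2\,ds\,dt = -\int_{-\infty}^0 \frac{d}{ds}\mathbb{A}\bigl(u(s,\cdot)\bigr)\,ds = \mathbb{A}(x) - \mathbb{A}\bigl(u(0,\cdot)\bigr) \le \mathbb{A}(x)-a,
\]
which is the energy bound. There is no real obstacle here; the only thing to be careful about is justifying the passage to the limit $s\to -\infty$ in the integration, which is immediate from the uniform exponential decay of $u$ to $x$ coming from non-degeneracy of $x$ (already invoked to put $u$ into $\mathscr{X}$).
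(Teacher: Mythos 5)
Your proof is correct and follows essentially the same route as the paper: identify the energy with the action drop $E(u) = \mathbb{A}(x) - \mathbb{A}(u(0,\cdot))$ via the negative $L^2$-gradient flow interpretation of the Floer equation, and bound $\mathbb{A}(u(0,\cdot))$ from below by $a$ using the boundary condition (iv) together with (\ref{actint}). The paper simply asserts the energy identity as a consequence of (i)--(iii) rather than spelling out the monotonicity of $s \mapsto \mathbb{A}(u(s,\cdot))$, but the content is the same.
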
 

\begin{proof}
Since $u$ satisfies (i), (ii) and (iii), we have
\[
E(u) = \int_{\R^- \times \T} |\partial_s u|_J^2\, ds\, dt = \mathbb{A}(x) - \mathbb{A}(u(0,\cdot)).
\]
By (iv) and (\ref{actint}) we find
\[
\mathbb{A}(u(0,\cdot)) = \int_{\T} \Bigl( \langle p, d_p H(t,q,p) \rangle - H(t,q,p) \Bigr)\, dt \geq a,
\]
where $u(0,t)=(q(t),p(t))$. The action and energy bounds follow.
\end{proof}

Since $T^*M$ is not compact, the next step in order to have the standard compactness statement for the set $\mathscr{U}(x)$ is proving a uniform $L^{\infty}$ bound for its element. Such a bound holds under additional assumptions on the Hamiltonian $H$ and on the $\omega$-compatible almost complex structure $J$. We treat two different sets of conditions: the first one ($H$ quadratic at infinity and $J$ close to a Levi-Civita almost complex structure) is the condition which is considered also in \cite{as06}, the second one ($H$ radial and $J$ of contact type outside a compact set) is the condition which is more common in symplectic homology (see e.g.\ \cite{sei06b}).

\begin{defn}
\label{qai}
We say that $H\in C^{\infty}(\T\times T^*M)$ is quadratic at infinity if there exist numbers $h_0>0$, $h_1\in \R$ and $h_2>0$ such that
\begin{eqnarray}
\label{H1}
\langle p, d_p H(t,q,p) \rangle - H(t,q,p) \geq h_0 |p|^2 - h_1, \\
\label{H2}
|\nabla_p H(t,q,p) | \leq h_2 ( 1 + |p|), \qquad |\nabla_q H(t,q,p) | \leq h_2 (1 + |p|^2 ),
\end{eqnarray}
for every $(t,q,p)\in \T\times T^*M$. Here the norm $|\cdot|$ and the horizontal and vertical components of the gradient $\nabla_q$ and $\nabla_p$ are induced by some Riemannian metric on $M$, but these conditions do not depend on the choice of such a metric, up to the replacement of the numbers $h_0,h_1,h_2$.
\end{defn}

The name ``quadratic at infinity'' refers to the fact that (\ref{H1}) and (\ref{H2}) imply that $H$ satisfies the bounds
\[
\frac{1}{2} h_0 |p|^2 - h_3 \leq H(t,q,p) \leq h_4 |p|^2 + h_5,
\]
for suitable positive numbers $h_3,h_4,h_5$. Typical examples of Hamiltonians which are quadratic at infinity are the physical Hamiltonians of the form
\begin{equation}
\label{phys}
H(t,q,p) = \frac{1}{2} | p - \theta(t,q) |^2 + V(t,q),
\end{equation}
where $|\cdot|$ is the norm on $T^*M$ induced by some Riemannian metric on $M$, $\theta$ is a smooth time-periodic one-form on $M$ and $V$ is a smooth function on $\T\times M$. When $H$ is quadratic at infinity, for every $A\in \R$ the set of 1-periodic orbits $x$ with action $\mathbb{A}(x)\leq A$ is compact in $C^{\infty}(\T,T^*M)$ (see \cite[Lemma 1.10]{as06}).  

The proof of the next compactness result builds on the estimates proved in \cite{as06}.

\begin{prop}
\label{komp1}
There exists a positive number $j_0$ such that if $\|J-J_g\|_{\infty} \leq j_0$, where $J_g$ is the Levi-Civita almost complex structure induced by a Riemannian metric $g$ on $M$, and $H$ is quadratic at infinity, then there exists a compact subset of $T^*M$ which contains the image of all the elements of $\mathscr{U}(x)$.
\end{prop}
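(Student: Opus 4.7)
The strategy is to reduce to the $L^\infty$ estimate for finite-energy Floer cylinders established in \cite[Sections 1.5--1.6]{as06}, after using the boundary condition (iv) together with the quadratic-at-infinity hypothesis to control $u$ on a neighborhood of $\{0\}\times\T$. Throughout, I fix an element $u\in\mathscr{U}(x)$ and write $u(s,t)=(q(s,t),p(s,t))$ in local cotangent coordinates (or intrinsically, using the Levi-Civita splitting $TT^*M = T^h T^*M \oplus T^v T^*M$).

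The first step is the boundary $L^2$ bound. From Lemma \ref{energy} we have $\mathbb{A}(u(0,\cdot))\leq \mathbb{A}(x)$, and by the boundary condition (iv) the integrand of $\mathbb{A}(u(0,\cdot))$ is exactly $\langle p,d_pH\rangle - H$ evaluated along $u(0,\cdot)$. Applying (\ref{H1}) pointwise therefore gives
\[
h_0\int_\T |p(0,t)|^2\,dt - h_1 \leq \mathbb{A}(u(0,\cdot)) \leq \mathbb{A}(x),
\]
so $\|p(0,\cdot)\|_{L^2(\T)}$ is bounded by a constant depending only on $H$ and $\mathbb{A}(x)$. Combined with (\ref{H2}) and the boundary condition $\partial_t(\pi\circ u)(0,t) = d_pH(t,u(0,t))$, this yields a uniform $L^2$ bound on $\partial_t(\pi\circ u)(0,\cdot)$, hence a uniform $H^1(\T)$ (so, by Sobolev embedding, $L^\infty(\T)$) bound on the base loop $\pi\circ u(0,\cdot)$.

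The second step is to upgrade to an $L^\infty$ bound on the full map $u$. The key tool is the Bochner-type subharmonicity inequality for $\tfrac12 |p|^2$ along Floer trajectories which was established in \cite[Proposition 1.14]{as06}: when $\|J-J_g\|_\infty$ is sufficiently small (with $j_0$ depending on the constants $h_0,h_1,h_2$ and on $g$), the quadratic growth conditions (\ref{H1}) and (\ref{H2}) together with the Floer equation (\ref{cr}) force $\tfrac12|p|^2$ to satisfy a differential inequality of the form
\[
\Delta\bigl(\tfrac12|p|^2\bigr) \geq -c_0\bigl(1 + |p|^2\bigr) - c_1|\partial_s u|^2
\]
modulo controlled error terms. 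Away from the boundary (say on $]-\infty,-1[\times\T$), a standard Moser iteration / mean value argument as in \cite[Theorem 1.14]{as06} converts the energy bound $E(u)\leq \mathbb{A}(x)-a$ from Lemma \ref{energy} into a uniform pointwise bound on $|p|$ there, with constant depending only on $E(u)$, $h_0$, $h_1$, $h_2$ and $g$. For $s\in[-1,0]$, the same mean value argument applies on small half-balls centered on the boundary, but now one also needs control of $|p|$ on the portion of the boundary of the half-ball lying in $\{s=0\}\times\T$: this control is precisely what the first step provides, once it is upgraded from $L^2$ to $L^\infty$ by standard elliptic boundary regularity for the Cauchy-Riemann equation with the Robin-type boundary condition (iv)---viewing (iv) as a nonlinear lower-order relation between the traces of $q$ and $p$, and bootstrapping via the fact that $\pi\circ u(0,\cdot)\in H^1(\T)$ is already $L^\infty$-controlled.

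The main obstacle is carrying out this last boundary bootstrap rigorously, since the boundary condition (iv) is not of Lagrangian type, so the standard results on boundary regularity for $J$-holomorphic maps with totally real boundary conditions do not apply verbatim. However, the linear Fredholm analysis of Section 1 (in particular the basic identity of Lemma \ref{baside} and the proof of Proposition \ref{regprop}, which already establishes boundary regularity for $u$) shows that (iv) behaves like a first-order Robin condition on $q$ coupled to a pointwise constraint, and the iterative scheme of \cite[Section 1.6]{as06} adapts directly to this setting. Combining interior and boundary pointwise estimates with the uniform compactness of the base manifold $M$ yields a single compact subset $K\subset T^*M$ containing the image of every $u\in\mathscr{U}(x)$, as required.
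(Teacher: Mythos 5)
Your proof takes a genuinely different route from the paper's, and it has a gap exactly where you flagged one. The paper's proof is a short chain of citations that cleverly \emph{avoids} having to redo any elliptic boundary analysis for the non-Lagrangian condition (iv): starting from the action/energy bound of Lemma \ref{energy}, it invokes \cite[Lemma 1.12]{as06} to get that $p$ is uniformly bounded in $H^1(]-1,0[\times\T)$ (this is an \emph{interior} estimate obtained purely from the Floer equation, not from the boundary condition), then takes the trace to get $p(0,\cdot)$ uniformly bounded in $H^{1/2}(\T)$, hence in $L^r(\T)$ for every $r<\infty$; only at this point does it use (iv) together with (\ref{H2}) to convert this into a uniform bound on $\partial_t(\pi\circ u)(0,\cdot)$ in $L^r$, hence on $\pi\circ u(0,\cdot)$ in $W^{1,r}(\T)$ for every $r<\infty$; and then \cite[Theorem 1.14 (iii)]{as06} — which gives the $C^0$ bound for half-cylinders on which the \emph{base} boundary loop is controlled in $W^{1,r}$ — applies verbatim.

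Two concrete problems with your version. First, your Step 1 gives only an $L^2$ bound on $p(0,\cdot)$, obtained by combining (\ref{H1}) and (iv) directly in the action. That is correct as far as it goes, but it is strictly weaker than the $H^{1/2}$ bound the paper extracts via \cite[Lemma 1.12]{as06} and the trace operator. In particular, you obtain $\pi\circ u(0,\cdot)$ only in $H^1(\T)=W^{1,2}(\T)$, not in $W^{1,r}(\T)$ for arbitrary $r$; the paper's reliance on the full scale $r<\infty$ strongly suggests that $r=2$ would not be enough to invoke \cite[Theorem 1.14 (iii)]{as06}. Second, and more seriously, your Step 2 attempts to re-derive the $L^\infty$ estimate near $\{0\}\times\T$ by a Moser/half-ball argument, which requires upgrading $p(0,\cdot)$ from $L^2$ to $L^\infty$ via "standard elliptic boundary regularity for the Robin-type condition (iv)." You yourself call this the main obstacle and wave it off by asserting that the iterative scheme of \cite[Section 1.6]{as06} "adapts directly." But this is precisely the step the paper declines to carry out, because (iv) is not a totally real boundary condition and the standard boundary regularity theory for $J$-holomorphic half-cylinders does not apply; the whole point of the paper's arrangement of the estimates is to \emph{remove} (iv) from the picture before any boundary elliptic argument is needed, by converting it into a purely $q$-side ($W^{1,r}$) constraint and then appealing to an existing theorem of \cite{as06} that was designed for exactly such boundary data (as in the spaces $\mathscr{M}_\Phi$). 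So the gap you acknowledged is indeed a gap, and it is the crux of the argument.
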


\begin{proof}
Let $u\in \mathscr{U}(x)$ and write $u(s,t)=(q(s,t),p(s,t))$. By (\ref{H1}), the assumption of Lemma \ref{energy} is fulfilled, hence the energy of $u$ is uniformly bounded, and so is the action of $u(s,\cdot)$, for every $s\in \R^-$.

By \cite[Lemma 1.12]{as06}, $p$ is uniformly bounded in $H^1(]-1,0[ \times \T)$. By the continuity of the trace operator, $p(0,\cdot)$ is uniformly bounded in $H^{1/2}(\T)$, and a fortiori in $L^r(\T)$, for every $r<+\infty$. By the boundary condition (iv), together with the first estimate in (\ref{H2}), we deduce that $\partial_t q(0,\cdot)$ is uniformly bounded in $L^r(\T)$, for every $r<+\infty$. Therefore, $q(0,\cdot)$ is uniformly bounded in $W^{1,r}(\T)$, so by \cite[Theorem 1.14 (iii)]{as06}, the image of $u$ is uniformly bounded, provided that $\|J-J_g\|_{\infty}$ is small enough. 
\end{proof}

Although it is not needed in this paper, we now prove the analogous compactness statement in the setting which is more common in symplectic homology. Let $W\subset T^*M$ be a compact domain with smooth boundary, star-shaped with respect to the zero-section. For instance, $W$ could be the unit cotangent disk bundle induced by a metric on $M$. Then the restriction $\alpha:=\lambda|_{\partial W}$ of the standard Liouville form $\lambda=p\, dq$ to $\partial W$ is a contact form on $\partial W$, $(W,\lambda)$ is a Liouville domain, and $T^* M$ is naturally identified with its completion 
\[
\widehat{W} := W \cup_{\partial W} \bigl( [1,+\infty[ \times \partial W \bigr),
\]
(see e.g.\ \cite{sei06b} for the relevant definitions). Indeed, the standard Liouville vector field $Y=p \, \partial_p$ is transverse to $\partial W$ and its flow $\phi$ induces the diffeomorphism
\[
]0,+\infty[ \times \partial W \rightarrow T^*M \setminus \mathbb{O}_{T^*M} , \quad (\rho,x) \mapsto \phi_{\log \rho} (x),
\]
where $\mathbb{O}_{T^*M}$ denotes the the zero-section of $T^*M$. We denote by $\rho: T^*M \rightarrow \R^+$ the corresponding radial variable, extended as $\rho=0$ on the zero-section $\mathbb{O}_{T^*M}$, 
and we recall that the pull-back by the above diffeomorphism of the Liouville vector field $Y$ is $\rho \partial_{\rho}$. A Hamiltonian $H\in \C^{\infty}(\T\times T^*M)$ is said to be {\em radial outside a compact set} if there exists $\rho_0>0$ such that
\begin{equation}
\label{radial}
H(t,x) = h(\rho(x)), \qquad \mbox{on } \T \times \rho^{-1}([\rho_0,+\infty[),
\end{equation}
for some smooth function $h:[\rho_0,+\infty[ \rightarrow \R$. In this case, if $x=(q,p) \in T^*M$ and $\rho(x)\geq \rho_0$ then
\begin{equation}
\label{pdH} 
\langle p, d_p H(t,x) \rangle = dH(t,x) [Y(x)] = dh(\rho(x)) \bigl[ \rho(x) \partial_{\rho} \bigr] = \rho(x) h'(\rho(x)).
\end{equation}
By the above identity, the assumption of Lemma \ref{energy} is fulfilled whenever the function
\[
\rho \mapsto \rho h'(\rho) - h(\rho)
\]
is bounded from below on $[\rho_0,+\infty[$; this happens, for instance, when $h$ has the form
\[
h(\rho) = \lambda \rho^{\mu} + \eta,
\]
with $\lambda>0$, $\mu\geq 1$, $\eta\in \R$.
The time-periodic $\omega$-compatible almost complex structure $J$ is said to be {\em of contact type outside a compact set} if
\begin{equation}
\label{contact}
d\rho \circ J = \lambda, \qquad \mbox{on } \T \times \rho^{-1}([\rho_0,+\infty[),
\end{equation}
for $\rho_0>0$ large enough. Equivalently, $J$ preserves the symplectic	splitting
\[
T_{(\rho,x)} T^*M = \ker \alpha \oplus ( \R R \oplus \R Y),
\]
on the set $\rho^{-1}([\rho_0,+\infty[)$, 
where $R$ is the Reeb vector field of $(\partial W,\alpha)$, and
\[
J R = Y, \quad J Y = - R.
\]
The next result shows that when $H$ is radial and $J$ is of contact type outside a compact set, the maps in $\mathscr{U}(x)$ satisfy a uniform $L^{\infty}$ bound.

\begin{prop}
\label{komp2}
Assume that the Hamiltonian $H\in C^{\infty}(\T\times T^*M)$ satisfies (\ref{radial}) for some $h\in C^{\infty}([\rho_0,+\infty[)$ such that
\begin{equation}
\label{sotto}
\inf_{\rho\geq \rho_0} \bigl( \rho h'(\rho) - h(\rho) \bigr) > -\infty,
\end{equation}
and that the time-periodic $\omega$-compatible almost complex structure $J$ satisfies (\ref{contact}). Let $\rho_1\geq \rho_0$ be such that
\[
x(\T) \subset \rho^{-1}([0,\rho_1[).
\]
Then the image of every $u\in \mathscr{U}(x)$ is contained in the compact set $\rho^{-1}([0,\rho_1])$.
\end{prop}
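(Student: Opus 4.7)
The plan is to apply the maximum principle to $f := \rho \circ u$ on $\R^-\times \T$, exploiting the contact-type condition on $J$ to produce a favorable elliptic inequality for $f$ in the radial region, and then to use the boundary condition (iv) to rule out a boundary maximum via the Hopf Lemma. Throughout the argument, suppose for contradiction that $f(s_*,t_*)>\rho_1$ for some point; since $f\to \rho\circ x < \rho_1$ uniformly as $s\to-\infty$ by condition (iii), $f$ attains its supremum $M>\rho_1$ on some compact strip $[s_1,0]\times \T$.

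The first step is to derive, on the open set $\Omega_\mathrm{rad}:=\{(s,t)\in \R^-\times\T : f(s,t)>\rho_0\}$ where $u$ takes values in the region of validity of (\ref{radial}) and (\ref{contact}), the differential inequality
\[
\Delta f - c(s,t)\,\partial_s f \;=\; |\partial_s u|_J^2 \;\geq\; 0, \qquad c(s,t):=2h'(f)+f\,h''(f).
\]
To see this, use the contact-type relation $d\rho\circ J=\lambda$ (so $J^*\lambda=-d\rho$) together with $X_H=h'(\rho)R$ and $\lambda(X_H)=\rho h'(\rho)$, $\lambda(R)=\rho$, $\lambda(Y)=0$ on the radial region, to rewrite the components of $u^*\lambda$ as
\[
u^*\lambda \;=\; \partial_t f\,ds \;+\; \bigl(\rho h'(\rho)-\partial_s f\bigr)\,dt.
\]
The identity $d(u^*\lambda)=u^*\omega$, combined with $\omega(\partial_s u,\partial_t u)=-|\partial_s u|_J^2-h'(\rho)\,\partial_s f$ (which follows from $\omega$-compatibility of $J$ and Floer's equation (\ref{cr})), then yields the claim by equating coefficients of $ds\wedge dt$.

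The second step is the maximum principle. At any interior point of $\Omega_\mathrm{rad}$ where $f$ attains $M$, the strong maximum principle applied to the uniformly elliptic operator $\Delta-c\,\partial_s$ (whose coefficient $c$ is bounded on the compact set where $f\leq M$) forces $f\equiv M$ on the connected component of $\Omega_\mathrm{rad}$ containing this point. But on the portion of the boundary of that component lying in the interior of $\R^-\times\T$ one has $f=\rho_0<M$, a contradiction. Hence $M$ must be attained only at some boundary point $(0,t_*)\in\{0\}\times \T$. Since this component satisfies the interior sphere condition at $(0,t_*)$ (being open and containing a half-disk through that point), the Hopf boundary point lemma gives
\[
\partial_s f(0,t_*) \;>\; 0
\]
(the outward normal to $\R^-\times \T$ at $s=0$ is $+\partial_s$).

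The final step is to show that the boundary condition (iv) forces $\partial_s f(0,t)=0$ at every $t$ for which $u(0,t)$ lies in the radial region, directly contradicting the inequality above. Indeed, from Floer's equation,
\[
\partial_s f(0,t) \;=\; d\rho(\partial_s u)(0,t) \;=\; -\lambda\bigl(\partial_t u(0,t)\bigr)+\lambda\bigl(X_H(t,u(0,t))\bigr).
\]
Since $\lambda=p\,dq$ picks out only the horizontal part, $\lambda(\partial_t u(0,t))=\langle p(0,t),\partial_t(\pi\circ u)(0,t)\rangle$; using (iv), this equals $\langle p,d_pH\rangle=dH(Y)=\rho h'(\rho)$ where $Y$ is the Liouville vector field and the last equality uses $H=h(\rho)$ and $d\rho(Y)=\rho$. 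On the other hand $\lambda(X_H)=\rho h'(\rho)$ as noted above, so the two terms cancel and $\partial_s f(0,t_*)=0$, giving the desired contradiction. The main obstacle to formalizing this is essentially just the bookkeeping of the contact-type identities and the verification of the precise hypotheses of the Hopf Lemma; the cancellation at the boundary, however, is the structural heart of the proof and reflects exactly why the boundary condition (iv) is the right one for compactness.
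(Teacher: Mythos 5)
Your proof takes essentially the same route as the paper: derive an elliptic inequality for the radial coordinate of $u$ via the contact-type identity, apply the maximum principle to push the maximum to the boundary $\{0\}\times\T$, invoke the Hopf lemma to get $\partial_s f(0,t_*)>0$, and finally use the boundary condition (iv) together with $\lambda(X_H)=\rho h'(\rho)$ to show $\partial_s f(0,t_*)=0$. The cancellation at the boundary is indeed the structural heart of the argument, and you identify it correctly.

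Two points worth flagging. First, a small gap: you work with $\Omega_{\mathrm{rad}}=\{f>\rho_0\}$ and dismiss an interior maximum by claiming ``on the portion of the boundary of that component lying in the interior of $\R^-\times\T$ one has $f=\rho_0<M$.'' This portion of the boundary can be empty (e.g.\ if $u$ never leaves the radial region, so $\Omega_{\mathrm{rad}}$ is the whole half-cylinder); in that case the contradiction must instead come from $f(s,\cdot)\to\rho\circ x<\rho_1<M$ as $s\to-\infty$, which your write-up does not explicitly invoke at that step. The paper sidesteps this entirely by working with $\Omega=r^{-1}(]\rho_1,+\infty[)$, which is automatically bounded by the asymptotic condition, so its relative boundary is nonempty and $r=\rho_1$ there; this is a cleaner choice of domain. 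Second, a harmless sign slip: with the paper's conventions one has $\omega(\partial_s u,X_H)=+h'(r)\,\partial_s r$ rather than $-h'(r)\,\partial_s f$, which gives the coefficient $c=rh''(r)$ in the elliptic inequality (the paper's (\ref{elliptic})) instead of your $2h'+fh''$; since any bounded first-order coefficient works for the maximum principle and the Hopf lemma, this does not affect the validity of the argument, but you should double-check the sign computation.
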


\begin{proof}
If we consider the real valued function
\[
r: \R^- \times \T \rightarrow \R, \qquad r(s,t) := \rho(u(s,t)),
\]
we must prove that $r\leq \rho_1$. Arguing by contradiction, we assume the open subset of $\R^- \times \T$
\[
\Omega := r^{-1} (]\rho_1,+\infty[) = 
u^{-1} \bigl( \rho^{-1}(]\rho_1,+\infty[) \bigr)
\]
is not empty. Since $u(s,\cdot)$ converges uniformly to $x$ for $s\rightarrow -\infty$ and $\rho< \rho_1$ on $x(\T)$, the set $\Omega$ is bounded, hence $r$ achieves its maximum - necessarily larger than $\rho_1$ - on $
\Omega$. We denote by $\partial \Omega$ the relative boundary of $\Omega$ in $\R^- \times \T$ and by $\Gamma:= \Omega \cap (\T \times \{0\})$ the intersection of $\Omega$ with the boundary of the half-cylinder. Therefore, the boundary of $\Omega$ as a subset of the complete cylinder $\R \times \T$ is $\partial \Omega \cup \Gamma$.

By (\ref{radial}) and (\ref{contact}), a standard computation (see e.g.\ \cite[Lemma 2.4]{as09}) shows that $r$ satisfies
\begin{eqnarray}
\label{elliptic}
\Delta r - r h''(r) \partial_s r = | \partial_s u |_J^2 \geq 0, \\
\label{bdry}
\partial_s r = -\lambda (\partial_t u) + rh'(r), 
\end{eqnarray}
on the set $\Omega$. By (\ref{elliptic}) and the weak maximum principle, $r$ achieves its maximum on $\partial \Omega\cup \Gamma$. Since this maximum is larger than $\rho_1$ and $r=\rho_1$ on $\partial \Omega$, the maximum is achieved at a point $(0,t)$ of $\Gamma$ and by the Hopf lemma,
\[
\partial_s r (0,t) >0.
\]
On the other hand, (\ref{bdry}), the fact that $u$ satisfies the boundary condition (iv), and (\ref{pdH}) imply that at $(0,t)$ the function $r$ satisfies
\[
\partial_s r (0,t) = - \langle p, \partial_t q  \rangle
+ r h'(r) = - \langle p, d_p H(t,u) \rangle - r h'(r) = - r h'(r) + r h'(r) = 0,
\] 
where $u=(q,p)$.
This contradiction shows that $\Omega$ must be empty, concluding the proof.
\end{proof}

Propositions \ref{komp1} and \ref{komp2} have the following consequence, whose proof is standard and hence only sketched here: 

\begin{cor}
\label{komp}
Assume that either $H$ is quadratic at infinity and $J$ is $C^0$-close enough to a Levi-Civita almost complex structure, or that $H$ is radial, satisfies (\ref{sotto}), and $J$ is of contact type outside a compact set.
Then the space $\mathscr{U}(x)$ is relatively compact in $C^{\infty}_{\mathrm{loc}} (\R^-\times \T,T^*M)$.
\end{cor}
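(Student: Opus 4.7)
The plan is to combine the uniform $L^\infty$ and energy bounds already furnished by the two propositions with a standard bubbling-off analysis tailored to the non-Lagrangian boundary condition (iv), followed by an elliptic bootstrap. Observe first that in both settings the hypothesis (\ref{actint}) of Lemma \ref{energy} is satisfied: in the quadratic-at-infinity case it follows from (\ref{H1}), and in the radial case it follows from (\ref{pdH}) together with (\ref{sotto}). Thus every sequence $(u_k)\subset \mathscr{U}(x)$ takes values in a common compact set $K\subset T^*M$ (Propositions \ref{komp1} or \ref{komp2}) and satisfies the uniform energy bound $E(u_k)\leq \mathbb{A}(x)-a$.

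Next I would show that the gradients $|\nabla u_k|$ are uniformly bounded on every compact subset of $\R^-\times \T$, including subsets meeting the boundary $\{0\}\times \T$. Arguing by contradiction and applying the usual Hofer almost-maximum trick, suppose $|\nabla u_k(s_k,t_k)|=R_k\to \infty$ with $(s_k,t_k)$ converging to some $(s_\infty,t_\infty)\in \R^-\times \T$. The rescaled maps $v_k(s,t):=u_k(s_k+s/R_k,t_k+t/R_k)$ have $|\nabla v_k(0,0)|=1$ and, by the form of equation (\ref{cr}), solve
\[
\partial_s v_k + J(t_k+t/R_k,v_k)\bigl(\partial_t v_k - R_k^{-1} X_H(t_k+t/R_k,v_k)\bigr)=0.
\]
A subsequence converges in $C^\infty_{\mathrm{loc}}$ to a non-constant $J(t_\infty,\cdot)$-holomorphic map $v$ of finite energy.

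Two cases arise. If $R_k|s_k|\to \infty$, then $v$ is defined on all of $\C$, extends by removal of singularities to a non-constant $J$-holomorphic sphere, and has $\int_{S^2}v^*\omega=0$ because $\omega=d\lambda$ is exact on $T^*M$. Hence $E(v)=0$, a contradiction. If $R_k|s_k|\to c\in[0,\infty)$, then $v$ is defined on the closed half-plane $\{s\leq c\}\times \R$, and the boundary condition (iv) rescales to
\[
\partial_t(\pi\circ v_k)(R_k|s_k|,t) = R_k^{-1}\, d_pH\bigl(t_k+t/R_k, v_k(R_k|s_k|,t)\bigr).
\]
Because $d_pH$ is bounded on $\T\times K$, the right-hand side tends uniformly to zero, so in the limit $\partial_t(\pi\circ v)\equiv 0$ on $\{s=c\}$, meaning the boundary of $v$ is contained in a single cotangent fiber $T^*_{q_0}M$. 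Such fibers are exact Lagrangian (the Liouville form $\lambda=p\,dq$ vanishes on them), so by Stokes and the finite-energy removal of singularity at infinity, $\int v^*\omega =\int v(c,\cdot)^*\lambda=0$; therefore $E(v)=0$ and $v$ is constant, again a contradiction. This rules out all bubbling.

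Once $\|\nabla u_k\|_{L^\infty_{\mathrm{loc}}}$ is uniformly bounded, the standard elliptic bootstrap applies: in the interior of $\R^-\times \T$ this is the usual theory for the perturbed Cauchy--Riemann equation (\ref{cr}), while at the boundary one argues exactly as in the proof of Proposition \ref{regprop}, using the cotangent-compatible charts of Remark \ref{redatlas} to rewrite the boundary condition as a linear inhomogeneous Cauchy--Riemann problem with $H^k$ coefficients and totally real boundary values, thereby obtaining uniform $H^{k+1}$, hence $C^k$, bounds on every compact subset, for every $k$. Arzelà--Ascoli produces a $C^\infty_{\mathrm{loc}}$-convergent subsequence, proving the claim.

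The main obstacle is the boundary-bubbling step, since (iv) is not of Lagrangian type on the level of $\mathscr{U}(x)$. The saving grace, and the crux of the argument, is that the offending Hamiltonian term carries a factor $R_k^{-1}$ under rescaling and therefore disappears in the limit, leaving precisely the Lagrangian fiber condition, which is exact and thus incompatible with a non-constant finite-energy holomorphic half-plane.
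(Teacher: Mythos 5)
Your proposal is correct and follows essentially the same line as the paper's (sketched) proof: uniform $L^{\infty}$ and energy bounds, bubbling-off ruled out in the interior by exactness of $\omega$ and at the boundary by the fact that the limiting disk has boundary in a single cotangent fiber, on which $\lambda$ vanishes, followed by elliptic bootstrap. Your treatment of the boundary case is in fact a bit more explicit than the paper's — you split on the behaviour of $R_k|s_k|$ rather than only on the sign of $\bar s$, and you make visible the mechanism (the Hamiltonian term scales as $R_k^{-1}$) by which the non-Lagrangian condition (iv) degenerates to the Lagrangian fiber condition — but the underlying argument is the same.
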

 
\begin{proof}
The first step is to show that for every $s_0<0$, $\nabla u$ is bounded on $[s_0,0]\times \T$, uniformly in $u\in \mathscr{U}(x)$.
Assume, by contradiction, that there are sequences $(u_n)\subset \mathscr{U}(x)$ and $(z_n)\subset [s_0,0] \times \T$ such that $|\nabla u_n(z_n)|_{J}\rightarrow +\infty$. Up to a subsequence, we may assume that $(z_n)$ converges to some $z=(\bar{s},\bar{t}) \in [s_0,0] \times \T$. If $\bar{s} < 0$, a standard blow-up analysis at $z$ implies the existence of a non-constant pseudo-holomorphic sphere in $(T^*M,J(\bar{t},\cdot))$, which cannot exist because $\omega$ is exact. Assume that $\bar{s}=0$ and set $(q_n(t),p_n(t)) = u_n(0,t)$. By the boundary condition (iv) and the fact that the image of $u_n$ is uniformly bounded, $q_n'$ is uniformly bounded in $L^{\infty}$, so up to a subsequence $(q_n)$ converges uniformly to some $q\in C(\T,M)$. Then the standard blow-up analysis at $z$ implies the existence of a non-constant pseudo-holomorphic disk in $(T^*M,J(\bar{t},\cdot))$ with boundary in $T_{q(\bar{t})}^* M$. Such a disk cannot exist, because the Liouville form $\lambda$ vanishes on $T_{q(\bar{t})}^* M$. This contradiction shows that $\nabla u$ is uniformly bounded on $[s_0,0]\times \T$. Uniform bounds for all the higher derivatives on compact subsets of $\R^-\times \T$ follow from elliptic bootstrap.
\end{proof}

\section{The linear setting for orientations}
\label{linorsec}

In this section we recall some basic facts about determinant bundles and orientation of linear Cauchy-Riemann operators on cylinders and half-cylinders.

\paragraph{The determinant bundle.}
Let $E$ and $F$ be real Banach spaces. We recall that the determinant bundle $\Det(\mathrm{Fr}(E,F))$ over the space of Fredholm operators $\mathrm{Fr}(E,F)$ is the line bundle whose fiber at $T\in \mathrm{Fr}(E,F)$ is the line
\[
\Det(T) := \Lambda^{\max}(\ker T) \otimes (\Lambda^{\max}(\coker
T))^* .
\]
Here $\Lambda^{\max}$ denotes the top exterior power functor on the category of finite dimensional real vector spaces and linear maps among them.
See \cite{qui85} or \cite[Section 7]{ama09} for the construction of the analytic bundle
structure of $\Det(\mathrm{Fr}(E,F))$. 

Two linear isomorphisms
\[
R:E' \cong E \qquad \mbox{and} \qquad L: F \cong F'
\]
induce canonical smooth line bundle isomorphisms
\[
\Det(\mathrm{Fr}(E,F)) \rightarrow \Det (\mathrm{Fr}(E',F)) \qquad \mbox{and} \qquad
\Det(\mathrm{Fr}(E,F)) \rightarrow \Det (\mathrm{Fr}(E,F'))
\]
which lift the diffeomorphisms
\[
\mathrm{Fr} (E,F) \rightarrow \mathrm{Fr} (E',F), \; T \mapsto TR, \qquad \mbox{and} \qquad \mathrm{Fr} (E,F) \rightarrow \mathrm{Fr} (E,F'), \; T \mapsto LT.
\]
Actually, these are special cases of the fact that the composition of Fredholm operators lifts to the determinant bundles (see \cite[Section 7]{ama09}).

\paragraph{A class of linear Cauchy-Riemann operators on cylinders.}
Endow the extended real line $\overline{\R}=[-\infty,+\infty]$ with the differentiable structure which is induced by the bijection
\[
\Bigl[-\frac{\pi}{2},\frac{\pi}{2}\Bigr] \rightarrow \overline{\R}, \qquad s \mapsto \tan s, \quad \pm \frac{\pi}{2} \mapsto \pm \infty.
\]
Such a differentiable structure is inherited by the extended half lines $\overline{\R}^+=[0,+\infty]$ and $\overline{\R}^-=[-\infty,0]$.

Let $\mathscr{A}$ be the space of loops
\[
a: \T \longrightarrow \mathrm{sp}(\R^{2n},\omega_0)
\]
into the Lie-algebra of the symplectic group which are non-degenerate, meaning that the solution $W:[0,1]\rightarrow \mathrm{Sp}(\R^{2n},\omega_0)$ of
\[
W'(t) = a(t) W(t), \qquad W(0) = I,
\]
is a non-degenerate symplectic path, i.e.\ $W(1)$ does not have the eigenvalue 1.
We denote by
\[
\Sigma_{\partial} := \set{D_A}{A\in C^{\infty}\bigl(\overline{\R}\times \T, \mathrm{L}(\R^{2n}) \bigl) \mbox{ with } A(\pm \infty,\cdot) \in \mathscr{A} }
\]
the set of operators
\[
D_A : H^2(\R\times \T,\R^{2n}) \rightarrow H^1(\R\times \T,\R^{2n}), \qquad D_A u = \partial_s u + J_0 \partial_t u - J_0 A u,
\]
with non-degenerate asymptotics. It is well-known that $\Sigma_{\partial}$ consists of Fredholm operators (see e.g.\ \cite[Theorem 4.1]{sz92} for the same operators from $H^1$ to $L^2$; the proof in the case of higher Sobolev regularity is similar and uses the smoothness of $A$). The subscript $\partial$ refers to the fact that this is the set of linear operators that one gets when linearizing the Floer equation on cylinders, which is used in the definition of the boundary operator of the Floer complex.

The restriction of the determinant bundle over the space of Fredholm operators from $H^2(\R\times \T,\R^{2n})$ into $H^1(\R\times \T,\R^{2n})$ to $\Sigma_{\partial}$ is not orientable (that is, not trivial) (see \cite[Theorem 2]{fh93}), but if we fix the asymptotics we get an orientable line bundle: given $a_-$ and $a_+$ in $\mathscr{A}$, the space
\[
\Sigma_{\partial} (a_-,a_+) := \set{D_A \in \Sigma_{\partial}}{A(\pm \infty,\cdot) = a_{\pm}}
\]
is convex, hence the determinant bundle  restricts to an orientable line bundle, which we denote by
\[
\Det\bigl(\Sigma_{\partial}(a_-,a_+)\bigr) \rightarrow \Sigma_{\partial}(a_-,a_+).
\]
By a linear glueing construction (see \cite[Section 3]{fh93}), two orientations $o(a_0,a_1)$ and $o(a_1,a_2)$ of $\Det(\Sigma_{\partial}(a_0,a_1))$ and $\Det(\Sigma_{\partial}(a_1,a_2))$ induce canonically an orientation of $\Det(\Sigma_{\partial}(a_0,a_2))$, which we denote by
\[
o(a_0,a_1) \# o(a_1,a_2).
\]
Gluing of orientations is anti-symmetric: if $\widehat{o}$ denotes the opposite orientation of $o$, then
\[
\widehat{o}_1 \# o_2 = o_1 \# \widehat{o}_2 = \widehat{o_1 \# o_2}.
\]
The space $\Sigma_{\partial}(a_0,a_0)$ contains the translation invariant operator $D_{a_0}$, which is an isomorphism (see e.g. by \cite[proof of Theorem 4.1]{sz92}). Therefore
\[
\Det (D_{a_0}) = \R \otimes \R^*
\]
carries a canonical orientation. If $o(a_0,a_0)$ is the orientation of $\Sigma_{\partial}(a_0,a_0)$ which agrees the canonical one at $D_{a_0}$, then $o(a_0,a_0)$ acts as a unit for the gluing of orientations:
\begin{equation}
\label{unit0}
o(a,a_0) \# o(a_0,a_0) = o(a,a_0), \qquad o(a_0,a_0) \# o(a_0,a) = o(a_0,a),
\end{equation}
for every $a\in \mathscr{A}$ and every orientation $o(a,a_0)$, resp.\ $o(a_0,a)$, of $\Det(\Sigma_{\partial}(a,a_0))$, resp.\ $\Det(\Sigma_{\partial}(a_0,a))$.

This construction is associative, that is
\[
\bigl( o(a_0,a_1) \# o(a_1,a_2) \bigr) \# o(a_2,a_3) = o(a_0,a_1) \# \bigl( o(a_1,a_2) \# o(a_2,a_3) \bigr).
\]

Conjugation of an operator $D_A\in \Sigma_{\partial}(a_-,a_+)$ by some $U$ in $C^{\infty}(\overline{\R} \times \T,\mathrm{U}(n))$ produces an operator $U D_A U^{-1}$ which is of the form $D_B$, where
\begin{equation}
\label{laB}
B:= U A U^{-1}- J_0 \partial_s U U^{-1} + \partial_t U U^{-1}.
\end{equation}
In particular, if
\[
U(\pm \infty, t) = I , \qquad \forall t\in \T,
\]
then $U D_A U^{-1}$ belongs to the same space $\Sigma_{\partial}(a_-,a_+)$. The following result is proved in \cite[Lemma 13]{fh93}:

\begin{lem}
\label{conju1}
Let $a_-,a_+\in \mathscr{A}$ and let $U\in C^{\infty}(\overline{\R} \times \T,\mathrm{U}(n))$ be such that 
\[
U(\pm \infty, t) = I , \qquad \forall t\in \T.
\]
Then the canonical lift to the determinant bundle of the map
\[
\Sigma_{\partial}(a_-,a_+) \rightarrow \Sigma_{\partial}(a_-,a_+), \quad D_A \mapsto U D_A U^{-1},
\]
is orientation-preserving.
\end{lem}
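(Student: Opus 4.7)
The plan is to view the orientation character of conjugation by $U$ as a continuous group homomorphism
\[
\chi: \mathscr{U}_I \to \{\pm 1\}, \qquad \mathscr{U}_I := \set{U\in C^{\infty}(\overline{\R}\times \T,\mathrm{U}(n))}{U(\pm\infty,t)=I\ \forall t\in\T},
\]
and to show that $\chi \equiv 1$ by checking it on a set of generators for $\pi_0(\mathscr{U}_I)$. That $\chi$ is well-defined follows from formula (\ref{laB}), together with the fact that $\partial_s U(\pm\infty, t) = 0$ is forced by the smooth structure on $\overline{\R}$ (it corresponds to the factor $\cos^2\theta$ under $s=\tan\theta$); multiplicativity of $\chi$ follows from the functoriality of the determinant bundle under composition with Banach space isomorphisms on source and target.

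Next I would compute $\pi_0(\mathscr{U}_I)$. Since $U(\pm\infty,t) = I$ means $U$ factors through the quotient space $(\overline{\R}\times \T)/(\{\pm\infty\}\times \T)$, and this quotient is $S^2$ with its north and south pole identified (hence homotopy equivalent to $S^2 \vee S^1$), one obtains
\[
\pi_0(\mathscr{U}_I) \cong [S^2\vee S^1, \mathrm{U}(n)]_* \cong \pi_2(\mathrm{U}(n))\oplus \pi_1(\mathrm{U}(n)) = 0\oplus \mathbb{Z} = \mathbb{Z},
\]
generated by the scalar winding loop $U_*(s,t) := \alpha(s)\, I$, where $\alpha:\overline{\R}\to \mathrm{U}(1)$ has winding number one and $\alpha(\pm\infty)=1$. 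So it suffices to verify $\chi(U) = +1$ for $U$ in the identity component of $\mathscr{U}_I$ and for $U = U_*$.

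For the identity component, a smooth path $U_r \in \mathscr{U}_I$ from $U_0\equiv I$ to $U_1 = U$ yields a continuous family of canonical lifts $\widetilde{\Phi}_{U_r}$ of the conjugations, whose orientation character is locally constant and equals $+1$ at $r=0$. For $U_*$ the crucial observation is that scalar conjugators automatically preserve the fixed asymptotic subspace: setting $\alpha_r(s) := e^{2\pi i r\phi(s)}$ for a smooth $\phi:\overline{\R}\to\R$ with $\phi(-\infty)=0,\ \phi(+\infty)=1$, and $U_r(s,t) := \alpha_r(s)\, I$, formula (\ref{laB}) gives $U_r D_A U_r^{-1} = D_{A+2\pi r\phi'(s)\, I}$; since $\phi'(\pm\infty) = 0$ automatically, the operator $U_r D_A U_r^{-1}$ lies in $\Sigma_\partial(a_-, a_+)$ for every $D_A \in \Sigma_\partial(a_-, a_+)$ and every $r \in [0,1]$, even though $U_r \notin \mathscr{U}_I$ for $r \in (0,1)$. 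Thus the conjugation by $U_r$ defines a bundle automorphism of $\Det(\Sigma_\partial(a_-,a_+))$ for every $r$, the resulting family is continuous in $r$ and the identity at $r=0$, and local constancy of the orientation character yields $\chi(U_*) = +1$.

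The main obstacle is precisely the failure of $\mathscr{U}_I$ to be connected: the winding loop $U_*$ cannot be deformed to $I$ within $\mathscr{U}_I$ because of the obstruction in $\pi_1(\mathrm{U}(n)) = \mathbb{Z}$, so a naive homotopy argument handles only the identity component. The step above sidesteps this by allowing the homotopy $U_r$ to leave $\mathscr{U}_I$, exploiting the peculiar feature that scalar conjugators preserve the asymptotic subspaces regardless of the asymptotic values of $U$ itself, and concluding by continuity of the canonical lift to $\Det$.
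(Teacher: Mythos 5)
The paper does not prove this lemma; it cites \cite[Lemma 13]{fh93}, so there is no in-paper argument to compare yours against, and I will only assess the proposal's correctness. The gap is in the identification of the generator of $\pi_0(\mathscr{U}_I)$. The computation $\pi_0(\mathscr{U}_I) \cong \pi_2(\mathrm{U}(n)) \oplus \pi_1(\mathrm{U}(n)) \cong \Z$ is fine, with the $\pi_1$-factor detected by the based loop $s\mapsto U(s,t_0)$ in $\mathrm{U}(n)$. But the scalar loop $U_*(s,t)=\alpha(s)I$ then represents $n$ times a generator, not a generator: $\det(\alpha(s)I)=\alpha(s)^n$ has winding number $n$, and $\det_\ast:\pi_1(\mathrm{U}(n))\to\pi_1(\mathrm{U}(1))$ is an isomorphism. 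So what you have shown is that $\chi$ vanishes on the subgroup $n\Z\subset\Z\cong\pi_0(\mathscr{U}_I)$; this forces $\chi\equiv 1$ when $n=\dim M$ is odd, but says nothing about $\chi$ on the true generator $U_g(s,t):=\mathrm{diag}(\alpha(s),1,\dots,1)$ when $n$ is even.

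The escape-hatch homotopy that makes the scalar case work also closes up on the true generator. For scalar $U_r$ the conjugation term $U_r A U_r^{-1}$ equals $A$, so $B_r(\pm\infty)=a_\pm$ regardless of the value $U_r(\pm\infty)$; whereas for $U_g$ the intermediate value $U_r(+\infty)=\mathrm{diag}(e^{2\pi i r},1,\dots,1)$ differs from $I$ for $r\in(0,1)$, and $B_r(+\infty)=U_r(+\infty)\,a_+\,U_r(+\infty)^{-1}$ in general differs from $a_+$, so the path of conjugated operators leaves $\Sigma_\partial(a_-,a_+)$. You would then need orientability of the determinant line over a sub-family of $\Sigma_\partial$ in which the right asymptotic varies through conjugates of $a_+$, and that is exactly the regime where \cite[Theorem 2]{fh93} asserts the full determinant bundle is non-orientable, so no soft continuity argument closes the gap. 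Handling even $n$ requires a further idea beyond the homotopy bookkeeping given here.
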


\paragraph{A class of linear Cauchy-Riemann operators on half-cylinders.} Consider the space of linear operators
\[
\Sigma_{\mathscr{U}} := \set{T_{A,\alpha}}{ A\in C^{\infty}(\overline{\R}^-\times \T,\mathrm{L}(\R^{2n})) \mbox{ with } A(-\infty,\cdot)\in \mathscr{A}, \; \alpha \in C^{\infty}(\T,\mathrm{L}(\R^{2n},\R^n)) }
\]
of the form
\begin{eqnarray*}
T_{A,\alpha} &:& H^2(\R^-\times \T,\R^{2n}) \rightarrow H^1(\R^-\times \T,\R^{2n}) \times H^{1/2}(\T,\R^n), \\ u & \mapsto & \bigl( \partial_s u + J_0 \partial_t u - J_0 A u, \partial_t \pi u(0,\cdot) - \alpha u(0,\cdot) \bigr),
\end{eqnarray*}
where $\pi:\R^{2n}=\R^n \times \R^n\rightarrow \R^n$ is the projection onto the first factor. By Theorem \ref{fredholm}, each element of $\Sigma_{\mathscr{U}}$ is a Fredholm operator. As it is shown in Section \ref{secrum}, this is the set of operators which arise when linearizing the problem whose solution is the reduced unstable manifold $\mathscr{U}(x)$ of a Hamiltonian periodic orbit $x$.  

Let $a\in \mathscr{A}$. The determinant bundle over the space of Fredholm operators from $H^2(\R^-\times \T,\R^{2n})$ into $H^1(\R^-\times \T,\R^{2n}) \times H^{1/2}(\T,\R^n)$ restricts to an orientable line bundle on the convex space
\[
\Sigma_{\mathscr{U}} (a) := \set{T_{A,\alpha}\in \Sigma_{\mathscr{U}}}{A(-\infty,\cdot)=a},
\]
which we denote by
\[
\Det\bigl(  \Sigma_{\mathscr{U}} (a) \bigr) \rightarrow \Sigma_{\mathscr{U}} (a).
\]
Let $a_0$ and $a_1$ be elements of $\mathscr{A}$.
A linear gluing construction, which is completely analogous to the one which is described in \cite[Section 3]{fh93}, associates to an orientation $o(a_0,a_1)$ of $\Sigma_{\partial} (a_0,a_1)$ and an orientation $o(a_1)$ of $\Det(\Sigma_{\mathscr{U}}(a_1))$ an orientation of $\Det(\Sigma_{\mathscr{U}}(a_0))$, which we denote by
\[
o(a_0,a_1) \# o(a_1).
\]
Again, if $o(a_0,a_0)$ agrees with the canonical orientation of $\Det (D_{a_0}) = \R \otimes \R^*$, then
\begin{equation}
\label{unit}
o(a_0,a_0) \# o(a_0) = o(a_0).
\end{equation}
Associativity now reads as
\begin{equation}
\label{ass}
\bigl( o(a_0,a_1) \# o(a_1,a_2) \bigr) \# o(a_2) =
o(a_0,a_1) \# \bigl( o(a_1,a_2) \# o(a_2) \bigr) .
\end{equation}

The subgroup of $\mathrm{U}(n)$ of automorphisms which preserve the vertical space $(0) \times (\R^n)^*=i\R^n$ in the identification $T^* \R^n \cong \C^n$, $(q,p) \mapsto q+ip$, is precisely $\mathrm{O}(n)$, the orthogonal group of $\R^n$, whose elements are extended to $\C^n$ by complex linearity. We shall be interested in the subgroup of $\mathrm{O}(n)$ of orientation-preserving automorphisms, that is $\mathrm{SO}(n)$. 
 
Let $T_{A,\alpha} \in \Sigma_{\mathscr{U}}(a)$. Conjugation of $T_{A,\alpha}$ by some
$U$ in $C^{\infty}(\overline{\R}^- \times \T,\mathrm{U}(n))$ such that
\[
U(0,t)\in \mathrm{O}(n) \qquad \forall t\in \T,
\]
produces the operator
$(U \times U(0,\cdot)) T_{A,\alpha} U^{-1}$, which has the form $T_{B,\beta}$, where $B$ is as in (\ref{laB}) and
\[
\beta := \pi \partial_t U(0,\cdot) U^{-1}(0,\cdot) + U(0,\cdot) \alpha U^{-1}(0,\cdot).
\]
Therefore, if we also assume 
\[
U(- \infty, t) = I  \qquad \forall t\in \T,
\]
then $(U \times U(0,\cdot)) T_{A,\alpha} U^{-1}$ belongs to the same space $\Sigma_{\mathscr{U}}(a)$. The proof of the following result is analogous to the proof of \cite[Proposition 2.1]{as13}: 

\begin{lem}
\label{conju2}
Let $a\in \mathscr{A}$ and let $U\in C^{\infty}(\overline{\R}^- \times \T,\mathrm{U}(n))$ be such that
\[
U(- \infty, t) = I  \qquad \mbox{and} \qquad U(0,t)\in \mathrm{SO}(n), \qquad \forall t\in \T.
\]
Then the canonical lift to the determinant bundle of the map
\[
\Sigma_{\mathscr{U}}(a) \rightarrow \Sigma_{\mathscr{U}}(a), \quad T_{A,\alpha} \mapsto \bigl( U \times U(0,\cdot)\bigr) T_{A,\alpha} U^{-1},
\]
is orientation-preserving if and only if the lift of the loop $U(0,\cdot)$ to the two-fold covering $\mathrm{Spin}(n)$ of $\mathrm{SO}(n)$ is closed.
\end{lem}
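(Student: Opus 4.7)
The plan is to reduce the question to an invariant of the homotopy class of $U$ and then compute this invariant on one representative of each class. I proceed in three steps.

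\textbf{Step 1 (Homotopy invariance).} The conjugation $\varphi_U : T_{A,\alpha} \mapsto (U \times U(0, \cdot)) T_{A,\alpha} U^{-1}$ depends continuously on $U$ and lifts to a continuous bundle automorphism of $\Det(\Sigma_{\mathscr{U}}(a))$. Since $\Sigma_{\mathscr{U}}(a)$ is convex, hence connected, the orientation behavior of $\varphi_U$ is a locally constant function of $U$ on the space
\[
\mathscr{G} := \bigl\{U \in C^{\infty}(\overline{\R}^- \times \T, \mathrm{U}(n)) \,\bigl|\, U(-\infty, t) = I,\ U(0, t) \in \mathrm{SO}(n)\ \forall t\bigr\}.
\]
Collapsing $\{-\infty\} \times \T$ to a point identifies $\mathscr{G}$ up to homotopy with the space of maps $(D^2, \partial D^2) \to (\mathrm{U}(n), \mathrm{SO}(n))$, so $\pi_0(\mathscr{G}) \cong \pi_2(\mathrm{U}(n), \mathrm{SO}(n))$. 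The long exact sequence of the pair, combined with $\pi_2(\mathrm{U}(n)) = \pi_2(\mathrm{SO}(n)) = 0$ and the vanishing (for $n \geq 3$) of $\pi_1(\mathrm{SO}(n)) = \Z/2 \to \pi_1(\mathrm{U}(n)) = \Z$ (which follows from the fact that $\pi_1(\mathrm{SO}(n))$ is torsion while $\pi_1(\mathrm{U}(n))$ is torsion-free), gives an isomorphism $\pi_0(\mathscr{G}) \cong \pi_1(\mathrm{SO}(n))$ implemented by $[U] \mapsto [U(0, \cdot)]$. By the construction of the double cover $\mathrm{Spin}(n) \to \mathrm{SO}(n)$, this class vanishes exactly when the lift of $U(0, \cdot)$ is closed. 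It therefore suffices to compute the sign of $\varphi_U$ on one representative of each of the two classes; the cases $n = 1, 2$ are handled by minor variants of the same argument.

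\textbf{Step 2 (Trivial class).} The trivial component of $\mathscr{G}$ contains the constant map $U \equiv I$, for which $\varphi_U$ is the identity on $\Sigma_{\mathscr{U}}(a)$ and hence trivially orientation-preserving. By Step 1, $\varphi_U$ is orientation-preserving whenever $U(0, \cdot)$ is null-homotopic in $\mathrm{SO}(n)$, i.e.\ whenever its $\mathrm{Spin}$-lift is closed.

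\textbf{Step 3 (Non-trivial class).} By Step 1 it suffices to exhibit one representative in the non-trivial component whose sign can be computed. I take $U(0, t) := R_{2\pi t} \oplus I_{n-2}$, where $R_\theta \in \mathrm{SO}(2)$ is the planar rotation, and extend it by a cutoff to a map $U \in \mathscr{G}$ equal to $I$ on $]-\infty, -1] \times \T$. Splitting $\C^n = \C^2 \oplus \C^{n-2}$, I choose $T_{A,\alpha}$ diagonal with respect to this decomposition. The $\C^{n-2}$ factor is handled by Step 2, so the entire sign is captured by the $\C^2$ block, which one may take in one of the explicit forms of Lemma \ref{compu1} or Lemma \ref{compu2}. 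Conjugation by $R_{2\pi t}$ corresponds to a shift of Fourier indices in the explicit bases of the kernel and cokernel computed there, and the induced isomorphism of top exterior powers carries the sign $-1$, yielding the orientation reversal. This last step is the main technical obstacle: it requires carefully tracking how the Fourier bases of $\ker$ and $\coker$ transform under the shift induced by $R_{2\pi t}$, including the relative boundary contribution coming from the $\alpha$-term, and then comparing with the orientations fixed on $\Sigma_{\partial}(a_-, a_+)$ and $\Sigma_{\mathscr{U}}(a)$. This is essentially the bookkeeping carried out in \cite[Proposition 2.1]{as13}, which can be adapted verbatim to the present setting.
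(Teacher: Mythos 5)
Your overall strategy — reduce to checking one representative per homotopy class of $U(0,\cdot)$, then compute the sign on a generator — is the natural one, and since the paper itself delegates the proof to \cite[Proposition 2.1]{as13}, your deferral in Step 3 is consistent with what the authors do. Steps 1 and 2 are sound, modulo two minor points you should make explicit: for $n=2$ the group $\pi_1(\mathrm{SO}(2))=\Z$ has infinitely many classes, so one needs the observation that $U\mapsto \mathrm{sign}(\varphi_U)$ is a group homomorphism on $\pi_0(\mathscr{G})$ (which follows from $\varphi_U\circ\varphi_{U'}=\varphi_{UU'}$) before it suffices to evaluate on a generator; and your identification $\pi_0(\mathscr{G})\cong\pi_2(\mathrm{U}(n),\mathrm{SO}(n))$ has a basepoint mismatch (the collapsed end sits at the disk's center, not on the boundary circle), which is harmless — since $\pi_2(\mathrm{U}(n))=0$ the bijection $\pi_0(\mathscr{G})\to\pi_1(\mathrm{SO}(n))$, $[U]\mapsto[U(0,\cdot)]$, still holds — but should be acknowledged.

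The genuine gap is in Step 3. Your ``shift of Fourier indices'' mechanism does not apply as stated, for two reasons. First, $R_{2\pi t}\in\mathrm{SO}(2)\subset\mathrm{U}(2)$ acts on $\C^2\cong T^*\R^2$ as the $\C$-linear extension of a planar rotation, and it is only diagonalized in the eigenbasis $(1,\mp i)$, which is a constant unitary change of coordinates $P$ that lies in $\mathrm{U}(2)\setminus\mathrm{O}(2)$ and hence does not preserve the vertical subspace $i\R^2$. Conjugating by $P$ therefore leaves $\Sigma_{\mathscr{U}}(a)$: the projection $\pi$ in the boundary operator becomes $\pi P$, which is no longer the projection onto the first real factor, so the Fourier-shift computation cannot be carried out without simultaneously reworking the boundary condition. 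Second, after conjugation $\beta=\pi\,\partial_t U(0,\cdot)\,U(0,\cdot)^{-1}$ is non-zero, so the conjugated operator has a non-trivial $\beta$-term, and Lemmas \ref{compu1} and \ref{compu2} (which assume $\alpha=0$) do not directly give a basis for its kernel and cokernel. A correct argument must either work with the non-diagonal $R_{2\pi t}$-conjugation directly (for instance choosing $A$ a multiple of the identity so that $UAU^{-1}=A$, which at least kills one correction term), or track carefully how the orientation of $\Det(\Sigma_{\mathscr{U}}(a))$ transports along a path from $T_{A,0}$ to $T_{B,\beta}$ inside $\Sigma_{\mathscr{U}}(a)$ and compare with the map induced by $u\mapsto Uu$ on kernels and cokernels; in either case the spectral-flow/orientation bookkeeping is the actual content of \cite[Proposition 2.1]{as13} and is not captured by the shift-of-indices picture.
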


Since $\mathrm{SO}(1)=\{1\}$, $\pi_1(\mathrm{SO}(2))=\Z$ and $\pi_1(\mathrm{SO}(n))=\Z_2$ for $n\geq 3$, the above lemma says that the above map is orientation preserving if an only if either
\begin{enumerate}
\item $n=1$, or
\item $n=2$ and the homotopy class of the loop $U(0,\cdot)$ in $\mathrm{SO}(2)$ is an even multiple of the generator of $\pi_1(\mathrm{SO}(2))=\Z$, or
\item $n\geq 3$ and the loop $U(0,\cdot)$ is contractible in $\mathrm{SO}(n)$.
\end{enumerate}

\section{The Floer and the Morse complexes}
\label{defcompsec}

\paragraph{The Floer complex of $H$.} We start by recalling the definition of the Floer complex with integer coefficients. Our main reference for orientations is \cite{fh93}, but we slightly deviate from the approach which is adopted there: instead than fixing a priori a coherent orientation, we shall follow the approach of P.\ Seidel and M.\ Abouzaid (see e.g.~ \cite{sei08b} and \cite[Section 3.3]{abo11}) of considering orientations in the definition of the generators. The two approaches are completely equivalent, but the latter has the advantage of reducing the amount of arbitrary data which one must choose.

Let $H\in C^{\infty}(\T\times T^*M)$ be quadratic at infinity in the sense of Definition \ref{qai}. The set of 1-periodic orbits of the corresponding 1-periodic Hamiltonian vector field $X_H$ is denoted by $\mathscr{P}(H)$. We assume that each element of $\mathscr{P}(H)$ is non-degenerate. The symbol $\mathscr{P}_k(H)$ denotes the set of $x\in \mathscr{P}_k(H)$ with Conley-Zehnder index $k$.

Let $J$ be a smooth $\omega$-compatible almost complex structure depending 1-periodically on time. We assume $J$ to be generic, so that for each pair $(x,y)\in \mathscr{P}(H)\times \mathscr{P}(H)$ the space of Floer cylinders
\begin{eqnarray*}
\mathscr{M}_{\partial^F}(x,y) :=& \Bigl\{ & u\in C^{\infty}(\R\times \T,T^*M) \, \Big| \, \partial_s u + J(t,u) (\partial_t u - X_{H_t}(u)) = 0, \\ 
&& \lim_{s\rightarrow -\infty} u(s,\cdot) = x \mbox{ and } \lim_{s\rightarrow +\infty} u(s,\cdot) = y \mbox{ in } C^{\infty}(\T,T^*M)\Bigr\}
\end{eqnarray*}
is the set of zeroes of a smooth Fredholm section 
\[
\mathscr{D} : \mathscr{B} \rightarrow \mathscr{H}
\]
of a Hilbert bundle $\mathscr{H}\rightarrow \mathscr{B}$ which is transverse to the zero-section. As base $\mathscr{B}$ of this bundle one can take a Hilbert manifold of cylinders $u:\R\times \T \rightarrow T^*M$ of Sobolev class  $H^2_{\mathrm{loc}}$ which converge to $x$ and $y$ for $s\rightarrow -\infty$ and $s\rightarrow +\infty$ in a suitable fashion. The fibers $\mathscr{H}_u$ consist of sections of $u^*(TT^*M)$ of Sobolev class $H^1_{\mathrm{loc}}$ which tend to zero for $s\rightarrow \pm \infty$ in a suitable way (see Section \ref{secrum}). In particular, each set $\mathscr{M}_{\partial^F}(x,y)$ is a smooth finite dimensional manifold.

For every $x\in \mathscr{P}(H)$ we fix once and for all a unitary trivialization of the complex bundle $(x^*(TT^*M),u^*(J))$ 
\[
\Theta_x: \T \times \C^n \rightarrow u^*(TT^*M), \qquad (t,\xi) \mapsto \Theta_x(t) \xi,
\]
which is vertical-preserving, meaning that it maps the subspace $i\R^n \cong (0) \times (\R^n)^*$ into the vertical subbundle $T^v T^*M = \ker D\pi$. The existence of vertical-preserving trivialization is guaranteed by the orientability of $M$ (see \cite[Lemma 1.2]{as06}). We also assume these trivializations to be orientation-preserving, meaning that $\Theta_x(t)|_{i\R^n}$ is orientation preserving (since the manifold $M$ is oriented, so is the vector bundle $T^v T^*M$). We also fix once and for all an element $a_0$ of $\mathscr{A}$.

The trivialization $\Theta_x$ determines an element $a_x$ of $\mathscr{A}$. Indeed, the differential of the flow of $X_H$ along $x$ can be read through the trivialization $\Theta_x$ and becomes a symplectic path $W_x:[0,1]\rightarrow \mathrm{Sp} (2n)$, which solves the linear Cauchy problem
\[
W_x'(t) = a_x(t) W_x(t), \qquad W_x(0) = I,
\]
where $a_x: \T \rightarrow \mathrm{sp}(2n)$ belongs to $\mathscr{A}$ because $x$ is non-degenerate. 

Let $F_k(H)$ be the free abelian group which is obtained from
\[
\bigoplus_{\substack{o_x \; \mbox{\small orientation of } \\ \mathrm{Det} (\Sigma_{\partial}(a_x,a_0)), \; \mbox{\small for }x\in \mathscr{P}_k(H)}} o_x,
\]
by taking the quotient by the identification
\[
\widehat{o}_x = - o_x, \qquad \forall x\in \mathscr{P}_k(H),
\]
where $\widehat{o}_x$ denotes the orientation of $\mathrm{Det} (\Sigma_{\partial}(a_x,a_0))$ which is opposite to $o_x$. A choice of an orientation of $\mathrm{Det} (\Sigma_{\partial}(a_x,a_0))$ for each $x\in \mathscr{P}_k(H)$ defines an isomorphism between $F_k(H)$ and
\[
\bigoplus_{x\in \mathscr{P}_k(H)} \Z.
\]
However, we prefer not to make such a choice and to consider ``oriented periodic orbits'' as generators of the Floer complex. 

Let $u\in \mathscr{M}_{\partial^F}(x,y)$ and denote by the same symbol the continuous extension $u: \overline{\R} \times \T \rightarrow T^*M$  which is obtained by setting $u(-\infty,t):=x(t)$ and $u(+\infty,t):=y(t)$. Since the convergence of $u$ to its asympltotic periodic orbits is exponential together with all the higher derivatives, this extension  is smooth with respect to the differentiable structure which was introduced at the beginning of Section \ref{linorsec}. Since the unitary trivializations $\Theta_x$ and $\Theta_y$ are vertical- and orientation-preserving, we can find a (not necessarily vertical-preserving) unitary trivialiazation 
\[
\Theta_u : \overline{\R} \times \T \times \C^n \rightarrow u^*(TT^*M), \qquad (s,t,\xi) \rightarrow \Theta_u (s,t)\xi,
\]
which agrees with the trivializations $\Theta_x$ and $\Theta_y$ for $s=-\infty$ and $s=+\infty$. This is proved in \cite[Lemma 1.7]{as06} and follows from the fact that the inclusion of the subgroup $\mathrm{SO}(n)$ of vertical- and orientation-preserving unitary automorphisms of $\C^n$ into $\mathrm{U}(n)$ induces the zero homomorphism between fundamental groups.

Let $x,y\in \mathscr{P}(H)$ have index difference $\mu_{CZ}(x)-\mu_{CZ}(y)=1$. In this case, the quotient $\mathscr{M}_{\partial^F}(x,y)/\R$ given by  the action of $\R$ by translations is discrete and compact, hence finite. An orientation $o_x$ of $\Det(\Sigma_{\partial}(a_x,a_0))$ and an element $[u]$ of $\mathscr{M}_{\partial^F}(x,y)/\R$ determine an orientation
\[
[u]_* (o_x)
\]
of $\Det(\Sigma_{\partial}(a_y,a_0))$ by the following procedure. Consider a unitary trivialization $\Theta_u$ of $u^*(TT^*M)$ as above. Thanks to $\Theta_u$, the fiberwise differential of the section $\mathscr{D}$ at $u$, that we denote by $D_f \mathscr{D}(u)$, is conjugated to a Fredholm operator $D_{A}$ in the space $\Sigma_{\partial}(a_x,a_y)$. This operator depends on the choice of the trivialization $\Theta_u$, but the space $\Sigma_{\partial}(a_x,a_y)$ to which it belongs does not. The operator $D_A$ is onto and has a one-dimensional kernel, which is spanned by the image of $\partial_s u$ by $\Theta_u^{-1}$. This kernel has the canonical orientation given by declaring the vector $\Theta_u^{-1} \partial_s u$ to be positive. Therefore, we get an orientation of 
\[
\Det(D_A) = \Lambda^{\max}(\Ker D_A) \otimes \R^*,
\]
which induces by continuation an orientation $o_{[u]}$ of $\Det(\Sigma_{\partial}(a_x,a_y))$. By Lemma \ref{conju1}, $o_{[u]}$ does not depend on the choice of the trivialization with prescribed asymptotics $\Theta_u$. Moreover, it clearly depends only on the equivalence class of $u$ in $\mathscr{M}_{\partial^F}(x,y)/\R$. This justifies the notation $o_{[u]}$. Now we define $[u]_* (o_x)$ to be the orientation of $\Det(\Sigma_{\partial}(a_y,a_0))$ such that the formula
\[
o_{[u]} \# [u]_*(o_x) = o_x
\]
holds.

Then we can define the boundary operator $\partial^F$ of the Floer complex generator-wise as
\[
\partial^F : F_k(H) \longrightarrow F_{k-1}(H), \qquad \partial^F o_x := \sum_{y\in \mathscr{P}_{k-1}(H)} \sum_{[u]\in \mathscr{M}_{\partial^F}(x,y)/\R} [u]_*(o_x).
\]
The well-posedness of this defintion uses the fact that 
\[
[u]_* (\widehat{o}_x) = \widehat{[u]_*(o_x)},
\]
which follows from the antisymmetry of the gluing of orientations. The fact that $\partial^F$ is indeed a boundary operator follows from a standard non-linear gluing construction.

The chain complex $\{F_*(H),\partial^F\}$ is called the {\em Floer complex of $H$}. A different choice of the almost complex structure $J$ or of the orientation data $\{\Theta_x\}_{x\in \mathscr{P}(H)}$, and $a_0\in \mathscr{A}$ produces an isomorphic chain complex, while a different choice of the Hamiltonian $H$ determines a chain isotopic complex. The homology of the Floer complex is thus independent on the pair $(H,J)$ and it is called the {\em Floer homology} of $T^*M$. 

\begin{rem}
The choice of using the orientations of the determinant bundle over $\Sigma_{\partial}(a_x,a_0)$ as generators of the Floer complex is motivated only by the fact that here we are always considering the cylinder (or half-cylinders) as domain of the linear Cauchy-Riemann-like operators we are looking at. By allowing more general Riemann surfaces one has other choices, that in other contexts might be more natural. For instance, one could consider linear operators on the open disc which in the variables $s=1/(1-\rho)$, $t=\varphi$ (where $(\rho,\varphi)$ are polar coordinates on the disc)  have the form $D_A$ near the boundary. One can then use the gluing of orientations in the more general framework of \cite{bm04} and obtain a definition of the Floer complex which is completely equivalent to the one we are using here. See \cite[Appendix 2]{rit10} for the latter approach in the context of symplectic homology and of topological quantum field theory.
\end{rem}

\begin{rem}
This way of defining the Floer complex with integer coefficients works whenever one can fix unitary trivializations of the tangent bundle along the periodic orbits in such a way that pairs of such trivializations along two orbits $x$ and $y$ can be extended to unitary trivializations over each cylinder joining $x$ and $y$. On cotangent bundles, vertical preserving trivializations have this property. More generally, such a choice is possible on symplectic manifolds whose first Chern class vanishes on tori. 
\end{rem}

\paragraph{The Morse complex of the Lagrangian action functional with local coefficients.} Here we want to define the Morse complex of the Lagrangian action functional on the loop space of $M$ with a particular system of local coefficients which takes the second Stiefel-Whitney class of the oriented manifold $M$ into account. See Appendix \ref{app} for the main definitions and general facts about systems of local coefficients, singular homology and Morse homology with local coefficients. 

Besides what we have already assumed above, let us assume that $H$ is {\em fiberwise uniformly convex}, meaning that
\begin{equation}
\label{uncon}
d_{pp} H(t,q,p)[\xi]^2 \geq h_0 |\xi|^2, \qquad  \forall (t,q,p)\in \T\times T^*M, \; \forall \xi \in T_q^*M,
\end{equation}
for some positive number $h_0$. Hamiltonians of the form (\ref{phys}) satisfy this condition. Under this assumption, the Legendre transform  $\mathscr{L}$ (see (\ref{legendre})) is a global diffeomorphism and the Fenchel formula
\begin{equation}
\label{fench}
L(t,q,v) := \max_{p\in T_q^* M} \Bigl( \langle p,v \rangle - H(t,q,p) \Bigr), \qquad \forall (t,q,v) \in \T\times TM,
\end{equation}
defines a smooth Lagrangian $L$ which satisfies the Legendre duality identity (\ref{legtra}), is quadratic at infinity  and fiberwise uniformly convex (as in Definition \ref{qai} and in (\ref{uncon}), but replacing the cotangent by the tangent bundle). A loop $q\in C^{\infty}(\T,M)$ is a critical point of the Lagrangian action functional $\mathbb{S}$ (see (\ref{laf})) if and only if it satisfies the Euler-Lagrange equations associated to $L$ with 1-periodic boundary conditions, and if and only if the loop $x = (q,d_v L(\cdot,q,q'))$ is an element of $\mathscr{P}(H)$. Furthermore, the fact that each $x\in \mathscr{P}(H)$ is non-degenerate is equivalent to the fact that each critical point $q$ of $\mathbb{S}$ is  non-degenerate. The Morse index $\ind (q;\mathbb{S})$ of $q$ coincides with the Conley-Zehnder index $\mu_{CZ}(x)$ of $x$.

The functional $\mathbb{S}$ has a continuously differentiable extension to the Hilbert manifold $H^1(\T,M)$ consisting of loops in $M$ of Sobolev class $H^1$. Such an extension is twice Gateaux differentiable but in general not twice Fr\'echet differentiable, unless the Lagrangian $L$ is exactly fiberwise quadratic, e.g.\ it comes from the Hamiltonian (\ref{phys}). Nevertheless, $\mathbb{S}$ always admits a smooth negative pseudo-gradient Morse vector field, that is a smooth vector field $G_{\mathbb{S}}$ on $H^1(\T,M)$ such that
\[
d\mathbb{S}(q) [G_{\mathbb{S}}(q)] < 0, \quad \forall q\in H^1(\T,M) \setminus \crit \, \mathbb{S},
\]
while each critical point $q$ of $\mathbb{S}$ is a hyperbolic singular point for $G_{\mathbb{S}}$, such that the second differential $d^2 \mathbb{S}(q)$ is positive-definite (resp.\ negative-definite) on the stable (resp.\ unstable) subspace of $T_q H^1(\T,M)$ (see \cite{as09b}). Each $q\in \crit\, \mathbb{S}$ has a stable manifold $W^s(q;G_{\mathbb{S}})$ and an unstable manifold $W^u(q;G_{\mathbb{S}})$ with respect to the flow of $G_{\mathbb{S}}$. These are smoothly embedded submanifolds of $H^1(\T,M)$ with
\[
\dim W^u(q;G_{\mathbb{S}}) = \ind (q;\mathbb{S}) = \codim W^s(q;G_{\mathbb{S}}).
\]
Up to a generic perturbation of $G_{\mathbb{S}}$, we may assume that $G_{\mathbb{S}}$ is Morse-Smale, meaning that the stable and unstable manifold of any two critical points meet transversally. The vector field $G_{\mathbb{S}}$ is positively complete and the pair $(\mathbb{S},G_{\mathbb{S}})$ satisfies the Palais-Smale condition, meaning that each sequence $(q_n)\subset H^1(\T,M)$ such that $\mathbb{S}(q_n)$ is bounded and $d\mathbb{S}(q_n)[G_{\mathbb{S}}(q_n)]$ is infinitesimal has a converging subsequence (see again \cite{as09b}). 

The Hilbert manifold $H^1(\T,M)$ on which the Lagrangian action functional $\mathbb{S}$ is defined is homotopically equivalent to $\Lambda M:= C^0(\T,M)$, the free loop space of $M$, and carries the system of local coefficients $\mathcal{G}$ which is induced by the following $\Z$-representation $\rho$ of the fundamental group of $\Lambda M$: every continuous loop $\gamma:\T \rightarrow \Lambda M$ defines a continuous map
\[
\tilde{\gamma} : \T^2 \rightarrow M, \qquad (s,t) \mapsto \gamma(s)(t),
\]
and the homomorphism
\[
\rho: \pi_1(\Lambda M) \rightarrow \mathrm{Aut}(\Z)
\]
is defined as
\[
\rho [\gamma] := \left\{ \begin{array}{cl} \mathrm{id} & \mbox{if } w_2(\tilde{\gamma}^* TM) = 0, \\ -\mathrm{id} & \mbox{if } w_2(\tilde{\gamma}^* TM) \neq 0. \end{array} \right.
\]
Here $w_2$ denotes the second Stiefel-Whitney class. Therefore, the local system of groups is trivial if and only if the second Stiefel-Whitney class of the oriented manifold $M$ vanishes on tori, and in particular if $M$ is spin. An example of a manifold which is not Spin but for which the latter condition holds is the non-trivial $S^2$-bundle over the orientable closed surface of genus 2 (we are grateful to B.\ Martelli for suggesting us this example).

More concretely, the system of local coefficients 
\[
\mathcal{G}= \bigl\{ \{G_q\}_{q\in \Lambda M}, \{h[\gamma]\}_{\gamma\in C^0([0,1],\Lambda M}) \bigr\} 
\]
is defined in the following way. Fix an element $\bar{q}$ in each connected component of $\Lambda M$ and, for every $q$ in the connected component of $\bar{q}$, a continuous path $\beta_q : [0,1] \rightarrow \Lambda M$ such that $\beta_q(0)=q$ and $\beta_q(1)=\bar{q}$. We also assume that $\beta_{\bar{q}}$ is the constant path at $\bar{q}$. Fix an orientation-preserving trivialization $\theta_{\bar{q}}$ of $\bar{q}^* (TM)$ for each privileged loop $\bar{q}$ and use parallel transport along the path $\beta_q$ to obtain a trivialization $\theta_q$ of $q^* (TM)$ for each $q\in \Lambda M$. Given a continuous path $\gamma: [0,1] \rightarrow \Lambda M$ such that $\gamma(0)=q_0$ and $\gamma(1)=q_1$, we define the isomorphism
\[
h[\gamma] : G_{q_1} := \Z \longrightarrow \Z =: G_{q_0}
\]
in the following way: if we compare the trivialization $\theta_{q_0}$ of $q_0^* (TM)$ with the trivialization of $q_0^* (TM)$ which is obtained by transporting  the trivialization $\theta_{q_1}$ along $\gamma$, we get a closed loop in $\mathrm{GL}^+(n,\R) \sim \mathrm{SO}(n)$, and we define $h[\gamma]$ to be the identity if this loop has a closed lift into the two-fold covering $\mathrm{Spin}(n) \rightarrow \mathrm{SO}(n)$, minus the identity otherwise. In order to simplify the notation, we set
\begin{equation}
\label{signum}
\sgn(\gamma) := \left\{ \begin{array}{cl} 1 & \mbox{if } h[\gamma] = \mathrm{id}, \\ -1 & \mbox{if } h[\gamma] = -\mathrm{id}. \end{array} \right.
\end{equation}

Denote by $\crit_k(\mathbb{S})$ the set of critical points of $\mathbb{S}$ of Morse index $k$, and let $M_k(\mathbb{S};\mathcal{G})$ be the free abelian group which is obtained from group
\[ 
\bigoplus_{\substack{o_q \; \mathrm{orientation} \\ \mathrm{of}\; W^u(q;G_{\mathbb{S}}), \; \mathrm{for} \; q\in \crit_k(\mathbb{S})}} o_q 
\] 
by taking the quotient by the identification
\[
\widehat{o}_q = - o_q, \qquad \forall q\in \mathrm{crit}_k(\mathbb{S}),
\]
where $\widehat{o}_q$ denotes the orientation of $W^u(q;G_{\mathbb{S}})$ which is opposite to $o_q$. 
Let $q^-,q^+$ be critical points of $\mathbb{S}$ with $\mathrm{ind}(q^-) - \mathrm{ind}(q^+)=1$. Then $W^u(q^-;G_{\mathbb{S}})\cap W^s(q^+;G_{\mathbb{S}})$ is a one-dimensional manifold and consists of finitely many curves joining $q^-$ to $q^+$: denote by
\[
\mathscr{M}_{\partial^M}(q^-,q^+) \subset C^0([0,1],\Lambda M)
\]
the finite set consisting of a choice of a parametrization for each of these curves (where $\gamma\in \mathscr{M}_{\partial^M}(q^-,q^+)$ implies that $\gamma(0)=q^-$ and $\gamma(1)=q^+$). 

An orientation $o_{q^-}$ of $W^u(q^-;G_{\mathbb{S}})$ and a path $\gamma\in \mathscr{M}_{\partial^M}(q^-,q^+)$ induce an orientation $\gamma_*(o_{q^-})$ of $W^u(q^+;G_{\mathbb{S}})$. Indeed, since
\[
T_{q^+} H^1(\T,M) = T_{q^+} W^u(q^+) \oplus T_{q^+} W^s(q^+),
\]
an orientation of $W^u(q^+;G_{\mathbb{S}})$ can be identified with a co-orientation of $W^s(q^+;G_{\mathbb{S}})$ (that is, an orientation of the normal bundle of $W^s(q^+;G_{\mathbb{S}})$ in $H^1(\T,M)$) and $\gamma_*(o_{q^-})$ is defined as the co-orientation of $W^s(q^-;G_{\mathbb{S}})$ which, together with the orientation $o_{q^-}$ of $W^u(q^-;G_{\mathbb{S}})$, determines the orientation of the intersection $W^u(q^-;G_{\mathbb{S}})\cap W^s(q^+;G_{\mathbb{S}})$ which at $\gamma((0,1))$ agrees with the direction of the flow of $G_{\mathbb{S}}$.

The homomorphism
\[
\partial_k^M : M_k(\mathbb{S};\mathcal{G}) \rightarrow M_{k-1}(\mathbb{S};\mathcal{G})
\]
is defined generator-wise as
\[
\partial_k^M o_{q^-} = \sum_{q^+\in \mathrm{crit}_{k-1}(\mathbb{S})} \sum_{\gamma\in \mathscr{M}_{\partial^M}(q^-,q^+)} \sgn (\gamma) \, \gamma_*(o_{q^-}) .
\]
It satisfies $\partial_k^M \circ \partial_{k+1}^M=0$, so $\{ M_*(\mathbb{S};\mathcal{G}),\partial_*^M\}$ is a chain complex of abelian groups, which is called the {\em Morse complex of $(\mathbb{S},G_{\mathbb{S}})$ with local coefficients in $\mathcal{G}$}. The choice of a different pseudo-gradient vector field $G_{\mathbb{S}}$ for $\mathbb{S}$ produces an isomorphic chain complex. In particular, the homology
\[
HM_*(\mathbb{S};\mathcal{G}) := H_* \bigl( M_*(\mathbb{S};\mathcal{G}),\partial_*^M \bigr)
\]
does not depend on $G_{\mathbb{S}}$ and is called the {\em Morse homology of $\mathbb{S}$  with local coefficients in $\mathcal{G}$}.

This homology is isomorphic to the singular homology of $H^1(\T,M)\simeq \Lambda M$ with coefficients in $\mathcal{G}$:
\[
HM_*(\mathbb{S};\mathcal{G}) \cong H_*(\Lambda M;\mathcal{G}).
\]
When the second Stiefel-Whitney class of $M$ vanishes on tori, $\mathcal{G}$ is the trivial system of integer coefficients, and the above Morse homology and singular homology are the usual ones. 

\section{The chain isomorphism $\Psi$}
\label{defisosec}

\paragraph{The chain isomorphism $\Psi$.} We can now define the chain isomorphism
\[
\Psi : F_*(H) \rightarrow M_*(\mathbb{S};\mathcal{G})
\]
from the Floer complex of the fiberwise uniformly convex and quadratic at infinity Hamiltonian $H$ to the Morse complex of the action functional $\mathbb{S}$, which is given by the Legendre-dual Lagrangian $L$, with coefficients in the local system $\mathcal{G}$. We start by noticing that, since $L$ is the Legendre-dual of $H$, the boundary condition (iv) on the elements of the reduced unstable manifold of $x\in \mathscr{P}(H)$ implies that if $u\in \mathscr{U}(x)$ and $u(0,t)=(q(t),p(t))$ then by (\ref{legtra})
\begin{equation}
\label{fide}
\begin{split}
\mathbb{A}(u(0,\cdot)) = \int_{\T} \Bigl( \langle p(t), q'(t) \rangle - H(t,q(t),p(t)) \Bigr)\, dt \\= \int_{\T} L(t,q(t),q'(t)) \, dt = \mathbb{S}(q) = \mathbb{S}(\pi\circ u(0,\cdot)). \end{split}
\end{equation}

Let $J\in \mathscr{J}(g)$ be close enough to the Levi-Civita almost complex structure $J_g$ and generic, so that transversality holds both for the spaces of full cylinders which define the Floer complex and for the reduced unstable manifolds $\mathscr{U}(x)$, for every $x\in \mathscr{P}(H)$ (see Lemma \ref{trans} (i)).

In order to make the definition of $\Psi$ simpler, it is useful to choose the fixed vertical- and orientation-preserving unitary trivializations $\Theta_x$ of $x^*(TT^*M)$, for $x\in \mathscr{P}(H)$, to be compatible with the fixed trivializations $\theta_q$ of $q^*(TM)$, for $q\in \Lambda M$: if $x\in \mathscr{P}(H)$ and $q:= \pi\circ x$, we choose the trivialization $\Theta_x$ in such a way that its restriction
\[
\Theta_x|_{\T\times \R^n} : \T \times \R^n \rightarrow u^*((T^v T^*M)^{\perp}) \cong q^*(TM)
\]
is homotopic to $\theta_q$. This choice has the following consequence. Given $u\in \mathscr{M}_{\partial^F}(x,y)$, $x,y\in \mathscr{P}(H)$, let $\gamma_u \in C^0([0,1],\Lambda M)$ be a reparametrization on $[0,1]$ of the path $s\mapsto \pi\circ u(s,\cdot)$ such that $\gamma_u(0)=\pi\circ x$ and $\gamma_u(1)=\pi\circ y$.  Let $\Theta_u$ be a vertical preserving unitary trivialization of $u^*(TT^*M)$ over $\overline{\R}\times \T$, which agrees with $\Theta_x$ on $\{-\infty\}\times \T$. Then the restriction of the trivialization $\Theta_u$ to $\{+\infty\} \times \T$ can be compared with the trivialization $\Theta_y$, producing a loop which takes values in $\mathrm{SO}(n)$,
\[
\Theta_y^{-1} \circ \Theta_u (+\infty,\cdot) : \T \rightarrow \mathrm{SO}(n),
\]
because both $ \Theta_u$ and $\Theta_y$ are vertical- and orientation-preserving. From the definition (\ref{signum}) of $\sgn(\gamma)$ and from the compatibility between the trivializations $\{\Theta_x\}_{x\in \mathscr{P}(H)}$ and $\{\theta_q\}_{q\in C^0([0,1],\Lambda M)}$ we immediately deduce the following:

\begin{lem}
\label{twist}
The loop $\Theta_y^{-1} \circ \Theta_u (+\infty,\cdot)$ lifts to a closed loop in $\mathrm{Spin}(n)$ if and only if $\sgn(\gamma_u)=1$.  
\end{lem}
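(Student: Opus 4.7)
The plan is to express both sides of the claimed equivalence as statements about the same loop in $\mathrm{SO}(n)$, so that both conditions collapse to the same Spin-liftability criterion. First I would exploit the vertical- and orientation-preserving hypothesis on $\Theta_u$, $\Theta_x$, $\Theta_y$ by using the horizontal-vertical splitting $TT^*M = T^hT^*M \oplus T^vT^*M$ together with the canonical identification $T^hT^*M \cong \pi^*(TM)$. Restricting to horizontal components yields an orientation-preserving trivialization $\Theta_u^h : \overline{\R}\times \T \times \R^n \to (\pi\circ u)^*(TM)$, with end restrictions $\Theta_u^h(-\infty,\cdot) = \Theta_x^h$ and $\Theta_u^h(+\infty,\cdot)$. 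Under this identification, the loop $\Theta_y^{-1}\circ\Theta_u(+\infty,\cdot):\T\to \mathrm{SO}(n)$ appearing in the statement is exactly $(\Theta_y^h)^{-1}\Theta_u^h(+\infty,\cdot)$.

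Next I would reinterpret the parallel transport of $\theta_{\pi\circ y}$ along $\gamma_u$ concretely in terms of $\Theta_u^h$. Since $\Theta_u^h$ trivializes $(\pi\circ u)^*(TM)$ over the entire cylinder $\overline{\R}\times \T$, and since trivializations of a trivial bundle over a cylinder are determined up to homotopy by their restriction to one end, the transported trivialization at $\{-\infty\}\times \T$ is represented, up to homotopy of trivializations, by $\Theta_u^h(-\infty,\cdot)\cdot \bigl(\Theta_u^h(+\infty,\cdot)\bigr)^{-1}\theta_{\pi\circ y}$, viewing gauge changes as loops $\T \to \mathrm{SO}(n)$. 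Hence the loop whose Spin-liftability defines $\sgn(\gamma_u)$ can be written in $\mathrm{SO}(n)$ as
\[
L_{1} \;=\; \theta_{\pi\circ x}^{-1}\cdot \Theta_x^h \cdot \bigl(\Theta_u^h(+\infty,\cdot)\bigr)^{-1}\cdot \theta_{\pi\circ y}.
\]

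Finally, I would apply the chosen compatibility $\Theta_x^h \simeq \theta_{\pi\circ x}$ and $\Theta_y^h \simeq \theta_{\pi\circ y}$ (homotopies of trivializations, equivalently of loops in $\mathrm{SO}(n)$): the factor $\theta_{\pi\circ x}^{-1}\Theta_x^h$ becomes null-homotopic, and $\theta_{\pi\circ y}$ may be replaced by $\Theta_y^h$. Thus $L_{1}$ is homotopic in $\mathrm{SO}(n)$ to $\bigl(\Theta_u^h(+\infty,\cdot)\bigr)^{-1}\Theta_y^h$, namely to the inverse of the loop from the statement. Since admitting a closed lift to the topological group $\mathrm{Spin}(n)$ depends only on the free homotopy class and is preserved under inversion of loops, this shows the two Spin-liftability conditions are equivalent, which is precisely the content of the lemma.

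\emph{Main obstacle.} The only nontrivial step is the identification in the second paragraph of the abstract parallel transport of trivializations along $\gamma_u$ (as used in the construction of $\mathcal{G}$) with the concrete extension-by-$\Theta_u^h$ construction. This is handled by the uniqueness up to homotopy of trivializations of a trivial bundle over the cylinder with prescribed boundary datum at $+\infty$; after this identification, the remainder is bookkeeping with homotopy classes in $\mathrm{SO}(n)$.
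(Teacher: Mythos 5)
Your proof is correct, and it supplies exactly the unwinding of definitions that the paper passes over by calling the lemma ``immediate''; the paper offers no written proof, so there is no competing argument of theirs to compare against. You correctly isolate the one nontrivial point, namely that the abstract parallel transport of trivializations along $\gamma_u$ used in the definition of the local system $\mathcal{G}$ agrees (up to homotopy of trivializations) with the concrete operation of extending by the horizontal part $\Theta_u^h$ of a vertical-preserving trivialization of $u^*(TT^*M)$ over the whole cylinder and restricting to the other end; the bookkeeping in $\mathrm{SO}(n)$ that follows — cancelling the null-homotopic gauge change $\theta_{\pi\circ x}^{-1}\Theta_x^h$, replacing $\theta_{\pi\circ y}$ by $\Theta_y^h$, and invoking invariance of $\mathrm{Spin}(n)$-liftability under free homotopy and loop inversion — is all sound.

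One small remark for precision: the restriction $\Theta_u^h(+\infty,\cdot)$ lands in $\mathrm{SO}(n)$ rather than merely $\mathrm{O}(n)$ because $\Theta_u$ agrees with the orientation-preserving $\Theta_x$ at $s=-\infty$ and the cylinder is connected, so $\Theta_u$ is automatically orientation-preserving at $s=+\infty$ as well; you use this implicitly when forming the loop $(\Theta_y^h)^{-1}\Theta_u^h(+\infty,\cdot)$ in $\mathrm{SO}(n)$, and it is worth saying explicitly.
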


Let us show how an orientation $o_x$ of $\Det(\Sigma_{\partial}(a_x,a_0))$ induces an orientation of the reduced unstable manifold $\mathscr{U}(x)$. Let us fix once and for all an orientation of $\Det(\Sigma_{\mathscr{U}}(a_0))$. An orientation $o^{\mathscr{U}}_x$ of $\Det(\Sigma_{\mathscr{U}}(a_x))$ can be defined by requiring that
$o_x \# o_x^{\mathscr{U}}$
agrees with the fixed orientation of $\Det(\Sigma_{\mathscr{U}}(a_0))$. 
Given $u\in \mathscr{U}(x)$, extend the trivialization $\Theta_x$ to a vertical-preserving unitary trivialization $\Theta_u$ of $u^*(TT^*M)$. Conjugation by $\Theta_u$ identifies the fiber-wise differential $D_f \mathscr{T}(u)$ of the section $\mathscr{T}$ whose zeros are the elements of $\mathscr{U}(x)$ (see Section \ref{secrum}) at $u$ with an operator $T_{A,\alpha}$ in $\Sigma_{\mathscr{U}}(a_x)$. Therefore, $o_x^{\mathscr{U}}$ induces an orientation of the line 
\[
\Det(D_f \mathscr{T}(u))= \Lambda^{\max} (\ker D_f \mathscr{T}(u)) \otimes \R^*= \Lambda^{\max} (T_u \mathscr{U}(x)) \otimes \R^*.
\]
Therefore, we get an orientation of $T_u \mathscr{U}(x)$.
If we choose a different vertical-preserving extension $\Theta_u'$ of the trivialization $\Theta_x$, we easily see that $\Theta_u^{-1} \Theta_u' (0,\cdot)$ is a contractible loop in $\mathrm{SO}(n)$, so Lemma \ref{conju2} ensures that the trivialization $\Theta_u'$ induces the same orientation of $T_u \mathscr{U}(x)$ (actually, here a more special and much easier to prove version of Lemma \ref{conju2} would suffice). Since the construction depends continuously on $u$, we obtain an orientation of the finite dimensional manifold $\mathscr{U}(x)$, as claimed. Conversely, an orientation of $\mathscr{U}(x)$ determines an orientation of $\Det(\Sigma_{\partial}(a_x,a_0))$. Therefore, in the case of a fiber-wise uniformly convex Hamiltonian the generators of the Floer complex are the oriented  
reduced unstable manifolds, exactly as in Morse theory.

Given $x\in \mathscr{P}(H)$, we consider the smooth map
\[
\mathscr{Q}_x: \mathscr{U}(x) \rightarrow H^1(\T,M),\qquad \mathscr{Q}_x(u) = \pi\circ u(0,\cdot).
\]
We can choose the Morse-Smale negative pseudo-gradient vector field $G_{\mathbb{S}}$ for $\mathbb{S}$ in such a way that for every pair $(x,q) \in \mathscr{P}(H) \times \crit \, \mathbb{S}$ the map $\mathscr{Q}_x$ is transverse to the submanifold $W^s(q;G_{\mathbb{S}})$. Notice that, since 
\[
d^2 \mathbb{S}(q)[\xi]^2 \geq 0, \qquad \forall q\in \crit\, \mathbb{S}, \; \forall \xi\in T_q W^s(q;G_{\mathbb{S}}),
\]
Lemma \ref{trans} (ii) implies that if $x\in \mathscr{P}(H)$ is the Hamiltonian orbit which corresponds to $q$, then $\mathscr{Q}_x$ is automatically transverse to $W^s(q;G_{\mathbb{S}})$ at the stationary element $u_0(s,t)=x(t)$. In fact, by this lemma the vector space 
\[
D\mathscr{Q}_x(u_0) T_{u_0} \mathscr{U}(x)
\]
has dimension $\mu_{CZ}(x) = \mathrm{ind}\, (q;\mathbb{S})$ and is contained in the negative cone of $d^2 \mathbb{S}(q)$, hence
\begin{equation}
\label{tra}
T_q H^1(\T,M) = D\mathscr{Q}_x(u_0) T_{u_0} \mathscr{U}(x) \oplus T_q W^s(q;G_{\mathbb{S}}).
\end{equation}

Let $x\in \mathscr{P}(H)$, $q\in \crit \, \mathbb{S}$, and denote by
\[
\mathscr{M}_{\Psi}(x,q):= \mathscr{Q}^{-1}_x\bigl(W^s(q;G_{\mathbb{S}})\bigr)
\]
 the set of elements $u$ in the reduced unstable manifold $\mathscr{U}(x)$ such that the loop $\pi\circ x$ belongs to the stable manifold $W^s(q;G_{\mathbb{S}})$. Since $\mathscr{U}(x)$ has dimension $\mu_{CZ}(x)$ and $W^s(q;G_{\mathbb{S}})$ has codimension $\ind(q;\mathbb{S})$, the above transversality requirements imply that $\mathscr{M}_{\Psi}(x,q)$ is empty whenever $\mu_{CZ}(x)< \ind (q;\mathbb{S})$ and it is 
 a (possibly empty) submanifold of $\mathscr{U}(x)$ of dimension
 \[
 \dim \mathscr{M}_{\Psi}(x,q) = \mu_{CZ}(x) - \ind (q;\mathbb{S}),
 \] 
 when $\mu_{CZ}(x) \geq \ind (q;\mathbb{S})$. Moreover, an orientation $o_x$ of $\Det(\Sigma_{\partial}(a_x,a_0))$ and an orientation $o_q$ of the unstable manifold $W^u(q;G_{\mathbb{S}})$ determine an orientation of $\mathscr{M}_{\Psi}(x,q)$: indeed, $o_x$ determines an orientation of $\mathscr{U}(x)$ - as we have seen above - and $o_q$ determines a co-orientation of $W^s(q;G_{\mathbb{S}})$, so the claim just follows from the fact that the transverse inverse image of a co-oriented submanifold by a map whose domain is oriented inherits a canonical orientation.
 
If $u$ belongs to $\mathscr{M}_{\Psi}(x,q)$, then $\pi\circ x$ and $q$ belong to the same free homotopy class and the inequality
\[
\label{actineq}
\mathbb{A}(x) \geq \mathbb{A}(u(0,\cdot)) = \mathbb{S}(\pi\circ u(0,\cdot)) \geq \mathbb{S}(q),
\]
holds, where the middle equality follows from (\ref{fide}). Morever, the first inequality is an equality if and only if $u$ is the stationary solution $u_0(s,t)=x(t)$, while the second inequality is an equality if and only if $\pi\circ u(0,\cdot)=q$.
We deduce that
\begin{eqnarray}
\label{cai1}
&\mathscr{M}_{\Psi}(x,q) = \emptyset \quad & \mbox{if } \mathbb{S}(q)> \mathbb{A}(x) \mbox{, or } \mathbb{S}(q)= \mathbb{A}(x) \mbox{ and } q\neq \pi\circ x, \\
\label{cai2}
&\mathscr{M}_{\Psi}(x,q) = \{u_0\} \quad & \mbox{if } q= \pi\circ x.
\end{eqnarray}
 
Moreover, Corollary \ref{komp} implies that $\mathscr{M}_{\Psi}(x,q)$ is relatively compact in the $C^{\infty}_{\mathrm{loc}}(\R^- \times \T)$ topology. In the particular case $\mu_{CZ}(x)=\ind (q;\mathbb{S})$, a standard argument involving compactness and transversality implies that $\mathscr{M}_{\Psi}(x,q)$ consists of finitely many elements. The choice of orientations $o_x$ and $o_q$ of $\Det(\Sigma_{\partial}(a_x,a_0))$ and $W^u(q;G_{\mathbb{S}})$ determine an orientation of $\mathscr{M}_{\Psi}(x,q)$ - as we have seen above - which in this case is nothing else by the choice of a sign $\pm 1$ for every $u\in \mathscr{M}_{\Psi}(x,q)$, and for such an $u$ we define
\[
u_*(o_x)
\]
to be the orientation of $W^u(q;G_{\mathbb{S}})$ for which this sign is $+1$. 

Given $u\in \mathscr{M}_{\Psi}(x,q)$ we define $\gamma_u : [0,1] \rightarrow \Lambda M$ to be a path joining $\pi\circ x$ to $q$, such that $\gamma_u|_{[0,1/2]}$ is a reparametrization on $[0,1/2]$ of the path $s\mapsto \pi\circ u(s,\cdot)$, $s\in ]-\infty,0]$, and $\gamma_u|_{[1/2,1]}$ is a reparametrization on $[1/2,1]$ of the path $s\mapsto \phi_s^{G_{\mathbb{S}}}(\pi\circ u(0,\cdot))$, $s\in [0,+\infty[$, where $\phi_s^{G_{\mathbb{S}}}$ denotes the flow of the vector field $G_{\mathbb{S}}$. The homomorphism $\Psi$ is defined generator-wise by
\begin{equation}
\label{defpsi}
\Psi: F_*(H) \rightarrow M_*(\mathbb{S};\mathcal{G}), \qquad \Psi o_x := \sum_{\substack{q\in \crit\, \mathbb{S} \\ \ind(q;\mathbb{S}) = \mu_{CZ}(x)}} \sum_{u\in \mathscr{M}_{\Psi}(x,q)} \sgn(\gamma_u) u_*(o_x).
\end{equation}
The above sum ranges over a finite set because of (\ref{cai1}).

\begin{lem}
$\Psi$ is a chain map.
\end{lem}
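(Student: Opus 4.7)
The plan is a standard ``boundary of a 1-dimensional moduli space'' argument, applied to the moduli spaces $\mathscr{M}_{\Psi}(x,q')$ with $x\in \mathscr{P}_k(H)$ and $q'\in \crit_{k-1}\mathbb{S}$. By Corollary \ref{isaman} together with the generic transversality of $\mathscr{Q}_x$ with $W^s(q';G_{\mathbb{S}})$ arranged earlier, such a moduli space is a smooth 1-manifold. I would compactify it as a 1-manifold with boundary by identifying all possible ways that sequences $u_n\in \mathscr{M}_{\Psi}(x,q')$ can fail to converge in $C^{\infty}_{\mathrm{loc}}$, and then invoke the appropriate gluing theorems to obtain the converse.

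Compactness up to breaking is controlled by Corollary \ref{komp}: on the Floer half-cylinder side no bubbling and no energy loss in the interior can occur, so loss of compactness can only come from (A) \emph{Floer breaking}, in which a single non-stationary Floer cylinder $v$ asymptotic to $x$ at $-\infty$ and to some $y\in \mathscr{P}(H)$ at $+\infty$ splits off, leaving behind an element $u'\in \mathscr{M}_{\Psi}(y,q')$, or (B) \emph{Morse breaking}, in which the gradient trajectory $s\mapsto \phi^{G_{\mathbb{S}}}_s(\pi\circ u_n(0,\cdot))$ breaks, producing an element $u\in \mathscr{M}_{\Psi}(x,q)$ with $q\in \crit_k\mathbb{S}$ followed by a Morse trajectory $\gamma\in \mathscr{M}_{\partial^M}(q,q')$. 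The index equation $\mu_{CZ}(x)-\ind(q';\mathbb{S})=1$ together with the action-monotonicity coming from (\ref{fide}) forces in (A) that $\mu_{CZ}(y) = \ind(q';\mathbb{S})$ and in (B) that $\ind(q;\mathbb{S})=\mu_{CZ}(x)$, so that exactly one level of breaking occurs and both factors lie in zero-dimensional moduli spaces. The standard gluing statements show that conversely each such broken configuration is the limit of a unique end of $\mathscr{M}_{\Psi}(x,q')$.

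The main obstacle will be checking that the signs and the local-coefficient weights match, so that the boundary components of type (A) correspond exactly to the terms of $(\Psi\circ \partial^F)o_x$ and those of type (B) to $(\partial^M\circ \Psi)o_x$. The orientation transports $[v]_*(o_x)$, $u_*$, and $\gamma_*$ defined in Sections \ref{defcompsec} and \ref{defisosec} are all governed by the linear gluing of orientations on determinant bundles, and I would use the associativity identity (\ref{ass}) together with the ``unit'' property (\ref{unit}) of the canonical orientation on $\Det(D_{a_0})$ to verify
\[
u'_*\bigl([v]_*(o_x)\bigr) \;=\; (v\# u')_*(o_x), \qquad \gamma_*\bigl(u_*(o_x)\bigr) \;=\; (u\# \gamma)_*(o_x),
\]
where $v\# u'$ and $u\# \gamma$ denote the glued broken configurations viewed as boundary points of $\overline{\mathscr{M}_{\Psi}(x,q')}$. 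The local-coefficient weights behave even more simply: since $\sgn:\pi_1(\Lambda M)\to \{\pm 1\}$ is a homomorphism, it is multiplicative under concatenation of paths in $\Lambda M$, so $\sgn(\gamma_{v\# u'})=\sgn(\gamma_v)\sgn(\gamma_{u'})$ and $\sgn(\gamma_{u\# \gamma})=\sgn(\gamma_u)\sgn(\gamma)$; Lemma \ref{twist} ensures that this notion of $\sgn$ along a Floer cylinder is the same one used to define $\gamma_v$. The standard ``opposite induced orientations at the two ends of a 1-manifold'' principle then yields the signed relation
\[
\sum_{\partial \overline{\mathscr{M}_{\Psi}(x,q')}} \pm 1 \;=\; 0,
\]
which, collected over all $q'\in \crit_{k-1}\mathbb{S}$ and rewritten with the above identifications, is precisely $(\partial^M\Psi - \Psi\partial^F)o_x = 0$ on each generator $o_x$.
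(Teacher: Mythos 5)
Your proposal follows the same strategy as the paper: compactify the one-dimensional moduli spaces $\mathscr{M}_{\Psi}(x,q')$, use gluing for surjectivity onto the boundary strata, and conclude from the ``opposite ends of a $1$-manifold cancel'' principle after matching orientations and local-coefficient weights. The compactness input (Corollary \ref{komp}, absence of bubbling, single-level breaking forced by the index and action gaps) is identified correctly.

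One point where your argument is more optimistic than what actually holds: you assert that the orientation transports compose cleanly, $u'_*\bigl([v]_*(o_x)\bigr) = (v\# u')_*(o_x)$, and that the $\sgn$ factors enter only through multiplicativity of $\sgn$ along concatenated paths. In fact, the transport $[v]_*$ on the Floer-cylinder side uses a unitary trivialization that agrees with the fixed asymptotic trivializations but is not required to be vertical-preserving, whereas the orientation of the reduced unstable manifold $\mathscr{U}(x)$ is defined via vertical-preserving trivializations. When you glue, the resulting trivialization on the half-cylinder is in general not vertical-preserving, and converting it to a vertical-preserving one multiplies the orientation by a sign that is precisely governed by Lemma~\ref{conju2} and read off via Lemma~\ref{twist}. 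So the correct version of your composition identity carries a $\sgn(\gamma_v)$ factor already (the paper's equation (\ref{in}) has $\sgn(\gamma_{u_j})$, which a homotopy argument then converts to $\sgn(\gamma_{v_j})$). Your bookkeeping can be made to work, but Lemma~\ref{conju2} is where the sign actually originates; associativity~(\ref{ass}) and the unit property~(\ref{unit}) alone do not produce it. The paper also spells out the three cases of which types of breaking occur at the two ends of a component; your outline handles these implicitly.
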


\begin{proof}
The chain map identity $\Psi\circ \partial^F = \partial^M \circ \Psi$ is implied by the following fact: for every $x_0\in \mathscr{P}(H)$ and $q_0\in \crit\, \mathbb{S}$ with $\ind(q_0) = \mu_{CZ}(x_0) -1$, there exists an involution on the set
\begin{equation}
\label{invol}
\Bigl( \bigcup_{\substack{x\in \mathscr{P}(H)\\ \mu_{CZ}(x) = \mu_{CZ}(x_0)-1}} \mathscr{M}_{\partial^F} (x_0,x) \times \mathscr{M}_{\Psi}(x,q_0) \Bigr) \cup \Bigl( \bigcup_{\substack{q\in \crit\, \mathbb{S}\\ \ind (q) = \mu_{CZ}(x_0)}} \mathscr{M}_{\Psi} (x_0,q) \times \mathscr{M}_{\partial^M}(q,q_0) \Bigr)
\end{equation}
which has no fixed points and satisfies, for $o_{x_0}$ an orientation of $\Det(\Sigma_{\partial}(a_{x_0},a_0))$:
\begin{enumerate}

\item if $(u_1,v_1)\in \mathscr{M}_{\partial^F}(x_0,x_1) \times \mathscr{M}_{\Psi} (x_1,q_0)$ is mapped into $(u_2,v_2)\in \mathscr{M}_{\partial^F}(x_0,x_2) \times \mathscr{M}_{\Psi} (x_2,q_0)$, then
\[
\sgn(\gamma_{v_1}) {v_1}_* ( [u_1]_*(o_{x_0})) = - \sgn(\gamma_{v_2}) {v_2}_* ( [u_2]_*(o_{x_0}));
\]

\item if $(u,v_1)\in \mathscr{M}_{\partial^F}(x_0,x) \times \mathscr{M}_{\Psi} (x,q_0)$ is mapped into $(v_2,\gamma)\in \mathscr{M}_{\Psi}(x_0,q) \times \mathscr{M}_{\partial^M} (q,q_0)$, then
\[
\sgn(\gamma_{v_1}) {v_1}_* ( [u]_*(o_{x_0})) = \sgn(\gamma_{v_2}) \sgn (\gamma) \gamma_* ( {v_2}_*(o_{x_0}));
\]

\item if $(v_1,\gamma_1)\in \mathscr{M}_{\Psi}(x_0,q_1) \times \mathscr{M}_{\partial^M} (q_1,q_0)$ is mapped into $(v_2,\gamma_2)\in \mathscr{M}_{\Psi}(x_0,q_2) \times \mathscr{M}_{\partial^M} (q_2,q_0)$, then
\[
\sgn(\gamma_{v_1}) \sgn(\gamma_1) {\gamma_1}_* ( {v_1}_*(o_{x_0})) = - \sgn(\gamma_{v_2}) \sgn (\gamma_2) {\gamma_2}_* ( {v_2}_*(o_{x_0})).
\]

\end{enumerate}

The fixed-point-free involution is given by a standard cobordism argument: the space $\mathscr{M}_{\Psi}(x_0,q_0)$ is a one-dimensional manifold, and each of its non-compact connected component is an open curve $W$, which has two distinct elements of (\ref{invol}) as boundary points; moreover, each element of (\ref{invol}) is a boundary point of exactly one connected component $W$ of $\mathscr{M}_{\Psi}(x_0,q_0)$ (see e.g.\ \cite{flo89b}). There remains to prove (i), (ii) and (iii).

Consider a connected component $W$ of $\mathscr{M}_{\Psi}(x_0,q_0)$ whose boundary points are $(u_1,v_1)$ and $(u_2,v_2)$ as in (i). First assume that $u_1^*(TT^*M)$ and $u_2^*(TT^*M)$ admit vertical-preserving unitary trivializations $\Theta_{u_1}$ and $\Theta_{u_2}$ which agree with $\Theta_{x_0}$ on $\{-\infty\} \times \T$ and with $\Theta_{x_1}$ and $\Theta_{x_2}$ on $\{+\infty\} \times \T$. In particular, by Lemma \ref{twist},
\[
\sgn(\gamma_{u_1}) = \sgn(\gamma_{u_2}) = 1.
\]
Let $\Theta_{v_1}$ and $\Theta_{v_2}$ be vertical-preserving unitary trivializations of $v_1^*(TT^*M)$ and $v_2(TT^*M)$ which agree with $\Theta_{x_1}$ and $\Theta_{x_2}$ at $\{-\infty\}\times \T$. Then $\Theta_{u_j}$ and $\Theta_{v_j}$ can be glued together and then slightly perturbed in order to produce a vertical-preserving trivialization $\Theta_{w_j}$ of $w^*_j(TT^*M)$ which agrees with $\Theta_{x_0}$ on $\{-\infty\} \times \T$, where $w_j$ is an element of $W$ close to the boundary point $(u_j,v_j)$. The trivializations $\Theta_{w_j}$ are admissible for the definition of the orientation of $W$. Then a standard argument using gluing of orientations (see e.g.\ \cite[Section 5]{fh93} and\cite{flo89b}) implies that 
\[
{v_1}_* ( [u_1]_*(o_{x_0})) = - {v_2}_* ( [u_2]_*(o_{x_0})).
\]
Consider now the general case, in which it might be impossible to choose $\Theta_{u_1}$ and/or $\Theta_{u_2}$ to be vertical-preserving and with the prescribed asymptotics. Changing the trivialization which is obtained by gluing $\Theta_{u_j}$ and $\Theta_{v_j}$ into a vertical-preserving one involves multiplication by a map $U:(-\infty,0]\times \T \rightarrow \mathrm{U}(n)$ such that $U(0,t)\in \mathrm{SO}(n)$ and $U(-\infty,t)=I$, for every $t\in \T$. By Lemmata \ref{conju2} and \ref{twist}, the above formula must be modified as follows
\begin{equation}
\label{in}
\sgn(\gamma_{u_1}) {v_1}_* ( [u_1]_*(o_{x_0})) = - \sgn(\gamma_{u_2}) {v_2}_* ( [u_2]_*(o_{x_0})).
\end{equation}
Since the paths $\gamma_{u_1} * \gamma_{v_1}$ and $\gamma_{u_2} * \gamma_{v_2}$ are homotopic with fixed end-points, there holds
\[
\sgn(\gamma_{u_1}) \sgn(\gamma_{v_1}) = \sgn(\gamma_{u_2}) \sgn(\gamma_{v_2}),
\]
and the identity (\ref{in}) is equivalent to the identity of Claim (i).

Consider now the case of a connected component $W$ of $\mathscr{M}_{\Psi}(x_0,q_0)$ whose boundary points are $(u,v_1)$ and $(v_2,\gamma)$ as in (ii). Arguing as above, if $u^*(TT^*M)$ admits a vertical-preserving unitary trivialization  which agrees with $\Theta_{x_0}$ on $\{-\infty\} \times \T$ and with $\Theta_{x}$ on $\{+\infty\} \times \T$, then 
\[
 {v_1}_* ( [u]_*(o_{x_0})) = \gamma_* ( {v_2}_*(o_{x_0})).
\]
In general, Lemmata \ref{conju2} and \ref{twist} imply that
\[
\sgn(\gamma_{u_1}) {v_1}_* ( [u]_*(o_{x_0})) = \gamma_* ( {v_2}_*(o_{x_0})).
\]
The identity of Claim (ii) now follows from the fact that the paths $\gamma_{u} * \gamma_{v_1}$ and $\gamma_{v_2} * \gamma$ are homotopic with fixed end-points, which implies that
\[
\sgn(\gamma_{u}) \sgn(\gamma_{v_1}) = \sgn(\gamma_{v_2}) \sgn(\gamma).
\]

Finally, consider a connected component $W$ of $\mathscr{M}_{\Psi}(x_0,q_0)$ whose boundary points are $(v_1,\gamma_1)$ and $(v_2,\gamma_2)$ as in (iii). In this case, we always have
\[
{\gamma_1}_* ( {v_1}_*(o_{x_0})) = -  {\gamma_2}_* ( {v_2}_*(o_{x_0})).
\]
The above identity is equivalent to the one of Claim (iii), because of the identity 
\[
\sgn(\gamma_{v_1}) \sgn(\gamma_1) = \sgn(\gamma_{v_2}) \sgn(\gamma_2),
\]
which follows from the fact that the paths $\gamma_{v_1} * \gamma_1$ and $\gamma_{v_2} * \gamma_2$ are homotopic with fixed end-points.
\end{proof}

By (\ref{cai1}) and (\ref{cai2}), the chain map $\Psi$ has the form
\[
\Psi o_x = {u_x}_*(o_x) + \sum_{\substack{q\in \mathrm{crit}_k\, \mathbb{S} \\ \mathbb{S}(q)< \mathbb{A}(x)}} n_{\Psi}(o_x,o_q)\, o_q, \qquad \forall x\in \mathscr{P}_k(H),
\]
where $u_x\in \mathscr{U}(x)$ denotes the stationary solution $u_x(s,t)=x(t)$, $o_q$ denotes an arbitrary orientation of $W^u(q;G_{\mathbb{S}})$, and $n_{\Psi}(o_x,o_q)$ is an integer. Therefore, $\Psi$ is an isomorphism, since it differs from the diagonal isomorphism 
\[
F_*(H) \rightarrow M_*(\mathbb{S},\mathcal{G}), \qquad o_x \mapsto {u_x}_*( o_x), 
\qquad \forall x\in \mathscr{P}(H),
\]
(which needs not be a chain map) by a homorphism which is strictly triangular with respect to the filtrations 
\begin{equation}
\label{actfil}
\bigl\{ F_*^{\leq A}(H) \bigr\}_{A\in \R} \quad  \mbox{and} \quad \bigl\{ M_*^{\leq A}(\mathbb{S},\mathcal{G})\bigr\}_{A\in \R},
\end{equation}
which are obtained by considering the subgroups generated by all the elements $x\in \mathscr{P}(H)$ or $q\in \crit \, \mathbb{S}$ whose action does not exceed the real number $A$. We summarize what we have proven so far into the following statement, which is part (i) of the theorem in the introduction:

\begin{thm}
Assume that $L$ is the Lagrangian which is Legendre-dual to the fiberwise uniformly convex and quadratic at infinity Hamiltonian $H\in C^{\infty}(\T\times \T^* M)$, all of whose 1-periodic orbits are non-degenerate. Then the formula (\ref{defpsi}) defines a chain isomorphism $\Psi$ from the Floer complex of $H$ to the Morse complex of the Lagrangian action functional $\mathbb{S}$ associated to the Legendre-dual Lagrangian $L\in C^{\infty}(\T\times TM)$ with coefficients in the local system $\mathcal{G}$. Such an isomorphism preserves the action filtrations (\ref{actfil}) and the splittings of $F_*(H)$ and $M_*(\mathbb{S},\mathcal{G})$ determined by the free homotopy classes of the generators.
\end{thm}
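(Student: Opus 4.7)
The plan is to assemble the ingredients that have already been established in the text and verify that each claim of the theorem follows at once. First, the chain map identity $\Psi \circ \partial^F = \partial^M \circ \Psi$ is exactly the content of the lemma proved immediately above the theorem: it follows from a standard one-dimensional cobordism analysis on the space $\mathscr{M}_{\Psi}(x_0,q_0)$ in the case $\mathrm{ind}(q_0;\mathbb{S}) = \mu_{CZ}(x_0) - 1$, the ends of which are described by the union in (\ref{invol}). The matching of signs at the two boundary points of each non-compact component is provided by the gluing-of-orientations formulas (i), (ii), (iii) of that lemma, and this is precisely where the local system $\mathcal{G}$ enters: Lemmas \ref{conju2} and \ref{twist} show that the failure of a Floer cylinder trivialization $\Theta_u$ to be simultaneously vertical-preserving and to have the correct asymptotics is detected by $\mathrm{sgn}(\gamma_u)$, so the ``twisted'' signs inserted in the definition (\ref{defpsi}) correct exactly this discrepancy.

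Second, to establish that $\Psi$ is an isomorphism I invoke (\ref{cai1}) and (\ref{cai2}), which are consequences of the Fenchel--Legendre identity (\ref{fide}) combined with the monotonicity of $\mathbb{A}$ along Floer half-cylinders and of $\mathbb{S}$ along $G_{\mathbb{S}}$-trajectories. These imply that $\mathscr{M}_{\Psi}(x,q)$ is empty whenever $\mathbb{S}(q) > \mathbb{A}(x)$, and that the only contribution from the critical level $\mathbb{S}(q) = \mathbb{A}(x)$ comes from the stationary half-cylinder $u_x(s,t)=x(t)$, which furthermore forces $q = \pi \circ x$. Consequently $\Psi o_x = {u_x}_*(o_x) + \sum_{\mathbb{S}(q) < \mathbb{A}(x)} n_\Psi(o_x,o_q)\,o_q$, i.e. $\Psi$ differs from the (non-chain) diagonal bijection $o_x \mapsto {u_x}_*(o_x)$ between the generators of $F_*(H)$ and $M_*(\mathbb{S};\mathcal{G})$ by an endomorphism which is strictly triangular with respect to the action filtrations (\ref{actfil}). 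Filtering by action and passing to sub-quotients therefore yields an isomorphism on each level, hence on the whole complex.

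Finally, the preservation of the action filtration is immediate from the inequality $\mathbb{A}(x) \geq \mathbb{S}(q)$ already noted on any $u \in \mathscr{M}_{\Psi}(x,q)$, and the preservation of the splitting by free homotopy classes is equally direct: for $u \in \mathscr{M}_{\Psi}(x,q)$ the concatenation of the path $s \mapsto \pi \circ u(s,\cdot)$ on $\R^-$ with the $G_{\mathbb{S}}$-trajectory through $\pi \circ u(0,\cdot)$ on $\R^+$ is a continuous path in $\Lambda M$ connecting $\pi \circ x$ to $q$, so $\pi \circ x$ and $q$ lie in the same component of $\Lambda M$. The only step where real work has to be done is the chain map identity, and the hardest single point in that step is the sign analysis at the breaking ends, whose resolution is precisely the raison d'\^etre of the local system $\mathcal{G}$; all remaining claims are formal consequences of (\ref{fide}), (\ref{cai1}) and (\ref{cai2}).
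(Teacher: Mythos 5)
Your proposal is correct and follows essentially the same route as the paper: the chain map property is established in the lemma immediately preceding the theorem via the cobordism analysis of the one-dimensional moduli spaces $\mathscr{M}_{\Psi}(x_0,q_0)$ together with the sign bookkeeping coming from Lemmata \ref{conju2} and \ref{twist}, the isomorphism property follows from the triangularity implied by (\ref{cai1}) and (\ref{cai2}) plus the automatic-transversality diagonal term ${u_x}_*(o_x)$, and the filtration and free-homotopy preservation are the formal consequences of (\ref{fide}) you describe. The paper's theorem is stated as a summary of the preceding work, and you have reassembled exactly those ingredients.
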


\begin{rem}
\label{twistrem}
As mentioned in the introduction, it is also possible to define a twisted Floer complex which is isomorphic to $M_*(\mathbb{S})$, the standard Morse complex of $\mathbb{S}$. In particular, the homology of the former is isomorphic to the standard singular homology of the free loop space of $M$, also when $M$ is not spin. The boundary operator of the twisted Floer complex takes the form
\[
\widehat{\partial}^F : F_k(H) \longrightarrow F_{k-1}(H), \qquad \widehat{\partial}^F o_x := \sum_{y\in \mathscr{P}_{k-1}(H)} \sum_{[u]\in \mathscr{M}_{\partial^F}(x,y)/\R} \sgn(\gamma_u) [u]_*(o_x),
\]
while the chain isomorphism is
\[
\widehat{\Psi}: F_*(H) \rightarrow M_*(\mathbb{S}), \qquad \widehat{\Psi} o_x := \sum_{\substack{q\in \crit\, \mathbb{S} \\ \ind(q;\mathbb{S}) = \mu_{CZ}(x)}} \sum_{u\in \mathscr{M}_{\Psi}(x,q)} u_*(o_x).
\]
\end{rem}

\paragraph{The chain map $\Psi$ is a homotopy inverse of $\Phi$.} Let us recall the definition of the chain isomorphism
\[
\Phi : M_*(\mathbb{S};\mathcal{G}) \rightarrow F_*(H)
\]
from \cite{as06} (suitably modified, in order to take the local system $\mathcal{G}$ into account). Set $\R^+:= [0,+\infty[$. Given $q\in \crit\, \mathbb{S}$ and $x\in \mathscr{P}(H)$, we denote by $\mathscr{M}_{\Phi}(q,x)$ the space of solutions $u: \R^+ \times \T \rightarrow T^*M$ of (\ref{cr}) such that $u(s,t)$ converges to $x(t)$ for $s\rightarrow +\infty$, uniformly in $t\in \T$, and such that
\[
\pi\circ u(0,\cdot) \in W^u(q;G_{\mathbb{S}}).
\]
For a generic choice of $J$ and $G_{\mathbb{S}}$, this is a smooth manifold of dimension $\ind(q;\mathbb{S})-\mu_{CZ}(x)$. Moreover, the elements $u$ of $\mathscr{M}_{\Phi}(q,x)$ satisfy the action estimates
\begin{equation}
\label{act2}
\mathbb{S}(q) \geq \mathbb{S}\bigl(\pi\circ u(0,\cdot)\bigr) \geq \mathbb{A}(u(0,\cdot)) \geq \mathbb{A}(x),
\end{equation}
where the middle inequality follows from the Fenchel formula (\ref{fench}) (see \cite[Lemma 2.3]{as06}). The above estimate implies the energy bounds which lead to compactness for $\mathscr{M}_{\Phi}(x,q)$. 

When $\ind(q;\mathbb{S})=\mu_{CZ}(x)$, $\mathscr{M}_{\Phi}(x,q)$ is a finite set. 
If $o_q$ and $o_x$ are orientations of $W^u(q;G_{\mathbb{S}})$ and $\Det(\Sigma_{\partial}(a_x,a_0))$, respectively, the sign $\epsilon^+(u)$  is defined as in \cite[Section 3]{as13}. We define $u_*(o_q)$ to be the orientation of $\Det(\Sigma_{\partial}(a_x,a_0))$ for which $\epsilon^+(u)=1$.

The chain map $\Phi$ is defined by the formula
\[
\Phi o_q := \sum_{\substack{x\in \mathscr{P}(H)\\ \mu_{CZ}(x) = \ind (x;\mathbb{S})}} 
\sum_{u\in \mathscr{M}_{\Phi}(q,x)} \sgn(\gamma_u) u_*(o_q), \qquad \forall q\in \crit\, \mathbb{S},
\]
where $\gamma_u : [0,1] \rightarrow \Lambda M$ is a path connecting $q$ to $\pi\circ x$ which reparametrizes the path $\phi_s^{G_{\mathbb{S}}}(\pi\circ u(0,\cdot))$, $s\in ]-\infty,0]$, on $[0,1/2]$ and the path $\pi\circ u$ on $[1/2,1]$.
By (\ref{act2}) and by automatic transversality at the stationary solutions, $\Phi$ is an isomorphism and it preserves the action filtrations (\ref{actfil}). It also preserves the splitting of the Morse and the Floer complexes determined by the free homotopy classes of the generators.

The next result is precisely statement (ii) of the theorem in the introduction.

\begin{thm}
\label{homeq}
The chain isomorphisms $\Phi$ and $\Psi$ are homotopy inverses one of the other through chain homoopies which preserve the action filtrations (\ref{actfil}) and the splitting of the Floer and the Morse complexes determined by the free homotopy classes of the generators.
\end{thm}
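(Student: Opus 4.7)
The strategy is to build both chain homotopies from parametric moduli spaces of hybrid configurations, following the standard scheme of Floer theory. I describe the construction of $K^F\colon F_*(H)\to F_{*+1}(H)$ satisfying $\partial^F K^F+K^F\partial^F=\mathrm{id}-\Phi\circ\Psi$; the homotopy $K^M$ is built symmetrically using quadruples consisting of a Morse flow segment, a half-Floer cylinder of the $\Phi$-type, an element of the reduced unstable manifold of the intermediate orbit, and a second Morse flow segment, with combined Morse-length equal to the parameter $R$. Given $x,y\in\mathscr{P}(H)$ and $R\in[0,+\infty)$, introduce
\[
\mathscr{K}^R(x,y):=\bigl\{(u_1,u_2)\,\big|\,u_1\in\mathscr{U}(x),\;u_2\colon\R^+\times\T\to T^*M\text{ solves (\ref{cr})},\;\lim_{s\to+\infty}u_2(s,\cdot)=y,\;\pi\circ u_2(0,\cdot)=\phi_R^{G_{\mathbb{S}}}(\pi\circ u_1(0,\cdot))\bigr\}
\]
and set $\mathscr{K}(x,y):=\bigsqcup_{R\geq 0}\mathscr{K}^R(x,y)$. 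Incorporating $R$ as an additional free parameter into the Fredholm package of Section \ref{secrum} and combining the transversality of Proposition \ref{trans} with the analogous generic statement for the free half-cylinder, $\mathscr{K}(x,y)$ is a smooth oriented manifold of dimension $\mu_{CZ}(x)-\mu_{CZ}(y)+1$ for generic data, the orientation induced from those of $\mathscr{U}(x)$ and of the free half-cylinder space by the determinant-bundle gluing of Section \ref{linorsec}. Then $K^F$ is defined by counting the zero-dimensional components of $\mathscr{K}(x,y)$ (occurring precisely when $\mu_{CZ}(y)=\mu_{CZ}(x)+1$), weighted by the orientation signs and by the factor $\sgn(\gamma_{u_1,u_2})$ associated to the concatenation path in $\Lambda M$, exactly as in the definitions of $\Phi$ and $\Psi$.

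The chain-homotopy identity is then extracted from the oriented-cobordism analysis of the one-dimensional strata $\mathscr{K}(x,z)$ with $\mu_{CZ}(z)=\mu_{CZ}(x)$. Uniform compactness on bounded $R$-intervals comes from Corollary \ref{komp} applied to the $u_1$-piece, standard half-cylinder estimates for the $u_2$-piece (whose energy is bounded by $\mathbb{A}(u_2(0,\cdot))-\mathbb{A}(y)$), and the Palais--Smale property of $G_{\mathbb{S}}$. The codimension-one boundary splits into four types: (a),(b) Floer-trajectory breaking at the two asymptotic ends, producing $K^F\partial^F$ and $\partial^F K^F$; (c) the limit $R\to+\infty$, where the intermediate gradient line escapes through a critical point $q\in\crit\,\mathbb{S}$, reproducing $\Phi\circ\Psi$ via standard nonlinear gluing; and (d) the limit $R=0$. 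Control of (d) rests on the inequality chain
\[
\mathbb{A}(x)\geq\mathbb{A}(u_1(0,\cdot))=\mathbb{S}(\pi\circ u_1(0,\cdot))=\mathbb{S}(\pi\circ u_2(0,\cdot))\geq\mathbb{A}(u_2(0,\cdot))\geq\mathbb{A}(y),
\]
valid on $\mathscr{K}^0(x,z)$ by Lemma \ref{energy}, identity (\ref{fide}), and the Fenchel inequality (\ref{fenineq}). Combined with $\mathbb{A}(x)=\mathbb{A}(z)$ (which is forced by the existence of an $R=0$ endpoint of a component of the one-dimensional manifold $\mathscr{K}(x,z)$), all inequalities become equalities, making $u_1$ and $u_2$ stationary with $\pi\circ x=\pi\circ z$; fiberwise convexity of $H$ and the Legendre correspondence then yield $x=z$. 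Automatic transversality (Lemma \ref{auttrans}), applied to the stationary pair, shows that this unique $R=0$ endpoint of $\mathscr{K}(x,x)$ is regular, and an orientation computation through Lemma \ref{conju2} and the conventions of Section \ref{defcompsec} shows it carries sign $+1$, so that (d) contributes exactly the identity.

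The main analytical obstacle is the sign analysis: one must check that the canonical orientation arising from the automatic-transversality setup at the stationary pair equals, after passing through the determinant-bundle gluings of Section \ref{linorsec}, the diagonal convention used in the leading terms of both $\Phi$ and $\Psi$, and that the standard gluing at $R\to+\infty$ produces the signs of the definition of $\Phi\circ\Psi$ rather than their opposites. Both checks follow the scheme of \cite{fh93} adapted to the half-cylinder setting as in \cite{as13}, with the twofold cover $\mathrm{Spin}(n)\to\mathrm{SO}(n)$ entering precisely through the $\sgn(\gamma)$-factors as in Lemma \ref{twist}. Finally, the action filtrations are preserved by both $K^F$ and $K^M$ because the displayed inequality chain (together with its analogue obtained from (\ref{act2}) for the Morse-side construction of $K^M$) gives $\mathbb{A}(x)>\mathbb{A}(y)$, resp.\ $\mathbb{S}(q^-)>\mathbb{S}(q^+)$, strictly away from the stationary diagonal; and the free-homotopy splitting is preserved because every configuration counted is, by construction, a continuous curve in $\Lambda M$ connecting its two asymptotic loops.
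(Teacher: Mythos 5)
Your proposed chain homotopy $K^F$ is built on precisely the moduli space that the paper singles out as the \emph{wrong} choice in Remark \ref{invchom0}: pairs $(u_1,u_2)$ with $u_1 \in \mathscr{U}(x)$ (the reduced unstable manifold) and a Morse flow segment of length $R$ linking $\pi\circ u_1(0,\cdot)$ to $\pi\circ u_2(0,\cdot)$. The gap is concentrated in your treatment of case (d). You claim that $\mathbb{A}(x)=\mathbb{A}(z)$ is ``forced by the existence of an $R=0$ endpoint of a component of the one-dimensional manifold $\mathscr{K}(x,z)$,'' but this simply does not follow: $\mathscr{K}^0(x,z)$ is an honest codimension-one stratum, and when $\mu_{CZ}(z)=\mu_{CZ}(x)$ with $\mathbb{A}(z)<\mathbb{A}(x)$ nothing prevents it from containing non-stationary configurations $(u_1,u_2)$ with $\pi\circ u_1(0,\cdot)=\pi\circ u_2(0,\cdot)$; your own inequality chain only gives $\mathbb{A}(x)\geq\mathbb{A}(z)$, and the cobordism count must include every such boundary point. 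Consequently the $R=0$ stratum contributes not the identity but a strictly triangular chain isomorphism with respect to the action filtration, and your argument shows at best that $\Phi\Psi$ is chain homotopic to that isomorphism --- exactly the conclusion of Remark \ref{invchom0}, not the assertion $\Phi\Psi\simeq\mathrm{id}$.

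The paper closes this gap by a genuinely different construction. Because $\Phi$ and $\Psi$ are already known to be chain isomorphisms, it suffices to show $\Psi\Phi\simeq\mathrm{id}$ on the Morse side $M_*(\mathbb{S};\mathcal{G})$. The homotopy $P$ is built from moduli spaces $\mathscr{M}_P(q^-,q^+)$ of pairs $(\sigma,u)$ with $\sigma>0$ and $u:[0,\sigma]\times\T\rightarrow T^*M$ solving the Floer equation on a \emph{finite} strip, with the Lagrangian-type condition $\pi\circ u(0,\cdot)\in W^u(q^-;G_{\mathbb{S}})$ at $s=0$ and, at $s=\sigma$, both the non-Lagrangian ``half-Hamiltonian'' condition and $\pi\circ u(\sigma,\cdot)\in W^s(q^+;G_{\mathbb{S}})$. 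As $\sigma\rightarrow 0$ the whole strip collapses and the limiting object is a negative pseudo-gradient trajectory of $G_{\mathbb{S}}$ from $q^-$ to $q^+$; since $\ind(q^-;\mathbb{S})=\ind(q^+;\mathbb{S})$ such a trajectory exists only if $q^-=q^+$, where it is constant, so the $\sigma=0$ stratum really does contribute the identity. If you want a direct $K^F$ on the Floer side, Remark \ref{invchom} tells you what to use instead of your $\mathscr{K}^R$: let $u_1$ range over the \emph{unrestricted} space (\ref{fu}) of negative-half-cylinder Floer solutions asymptotic to $x$ (not over $\mathscr{U}(x)$), and impose the coupling $v(0,\cdot)=\phi_{G_{\mathbb{A}}}(\sigma,u(0,\cdot))$ with the pseudo-gradient flow $\phi_{G_{\mathbb{A}}}$ on the loop space of $T^*M$, rather than the flow of $G_{\mathbb{S}}$ on the loop space of $M$.
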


\begin{proof}
Since $\Phi$ and $\Psi$ are isomorphisms, it is enough to show that $\Psi\Phi$ is chain homotopic to the identity on $M_*(\mathbb{S};\mathcal{G})$ through a chain homotopy with the above properties, i.e.\ that there exists a homomorphism
\[
P : M_*(\mathbb{S};\mathcal{G}) \rightarrow M_{*+1}(\mathbb{S};\mathcal{G})
\]
such that
\begin{equation}
\label{he}
\Psi\Phi - I = P \partial^M + \partial^M P,
\end{equation}
and for every $q^-\in \crit \, \mathbb{S}$, $P o_{q^-}$ is a linear combination of generators $o_{q^+}$, for $q^+\in \crit\, \mathbb{S}$ with $\mathbb{S}(q^+)\leq \mathbb{S}(q^-)$ and $[q^+]=[q^-]$ in $[\T,M]$. 
The definition of $P$ and the proof of (\ref{he}) is based on a standard argument, which we now sketch.
Given $q^-,q^+\in \crit\, \mathbb{S}$, let $\mathscr{M}_P(q^-,q^+)$ be the set of pairs $(\sigma,u)$ where $\sigma$ is a positive number and $u:[0,\sigma]\times \T\rightarrow T^*M$ solves (\ref{cr}) together with the boundary conditions
\begin{eqnarray}
\label{bdry-}
& \pi\circ u(0,\cdot) \in W^u(q^-;G_{\mathbb{S}}), \\
\label{bdry+}
& \pi\circ u(\sigma,\cdot) \in W^s(q^+;G_{\mathbb{S}}), \qquad \partial_t (\pi\circ u)(\sigma,t) =d_p H(t,u(\sigma,t)), \quad \forall t\in \T.
\end{eqnarray}
For generic $J$ and $G_{\mathbb{S}}$, $\mathscr{M}_P(q^-,q^+)$ is a smooth manifold of dimension
\[
\dim \mathscr{M}_P(q^-,q^+) = \ind(q^-;\mathbb{S}) - \ind(q^+;\mathbb{S})+1.
\]
The choice of orientations $o_{q^-}$ and $o_{q^+}$ of $W^u(q^-;G_{\mathbb{S}})$ and $W^u(q^-;G_{\mathbb{S}})$ determines an orientation of $\mathscr{M}_P(q^-,q^+)$.
The action estimates
\begin{equation}
\label{act2bis}
\mathbb{S}(q^-) \geq \mathbb{S}(\pi\circ u(0,\cdot)) \geq \mathbb{A}(u(0,\cdot)) \geq \mathbb{A}(u(\sigma,\cdot)) = \mathbb{S}(\pi\circ u(\sigma,\cdot)) \geq S(q^+)
\end{equation}
imply uniform energy bounds for the elements of $\mathscr{M}_P(q^-,q^+)$, from which one can deduce the compactness of the sequences $(\sigma_n,u_n)$ in  $\mathscr{M}_P(q^-,q^+)$ for which $\sigma_n$ is bounded and bounded away from zero. Compactness and transversality allow to prove that when $\ind(q^+;\mathbb{S}) = \ind(q^-;\mathbb{S}) + 1$, $\mathscr{M}_P(q^-,q^+)$ consists of finitely many points. The orientations $o_{q^-}$ and $o_{q^+}$ allow to count these points algebraically and to obtain the integer $n_P(o_{q^-},o_{q^+})$. The homomorphism $P: M_*(\mathbb{S}) \rightarrow M_{*+1}(\mathbb{S})$ is defined by
\[
P o_{q^-} := \sum_{\substack{q^+ \in \crit \, \mathbb{S}\\ \ind(q^+;\mathbb{S}) = \ind(q^-;\mathbb{S}) + 1}} n_P(o_{q^-},o_{q^+})\, o_{q^+}, \qquad \forall q^- \in \crit \, \mathbb{S}.
\]
By definition and by (\ref{act2bis}), the above sum involves only generators $q^+$ such that $[q^+]=[q^-]$ and $\mathbb{S}(q^+)\leq \mathbb{S}(q^-)$.
The identity (\ref{he}) can be proven by a standard cobordism argument, by analyzing the limiting behavior of the non-compact sequences in the 1-dimensional manifold $\mathscr{M}_P(q^-,q^+)$, for $\ind(q^+;\mathbb{S}) = \ind(q^-;\mathbb{S})$. In fact, non-compact sequences of the form $(\sigma_n,u_n)$ with $\sigma_n\rightarrow +\infty$ contribute to the coefficient of $o_{q^+}$ in either $\Psi\Phi o_{q^-}$, or $P\partial^M o_{q^-}$, or $\partial^M P o_{q^-}$. Sequences $(\sigma_n,u_n)$ such that $\sigma_n\rightarrow 0$ produce instead a negative pseudo-gradient flow line connecting $q^-$ to $q^+$, and since these critical points have the same Morse index, such a flow line exists if and only if $q^-=q^+$ and in this case it is constant. This implies that the non-compact sequences of the latter form contribute to the identity operator on $M_*(\mathbb{S};\mathcal{G})$ (see \cite[Proposition 4.10]{as10} for more details on how to deal with the non-compact sequences of the latter form). 
\end{proof}

\begin{rem}
\label{invchom0}
Also the chain homotopy between the composition $\Phi\Psi$ and the identity mapping on $F_*(H)$ can be defined by a counting process. One would be tempted to choose, for every $x,y\in F_*(H)$, the space of triplets $(\sigma,u,v)$ where $\sigma$ is a positive number, $u$ is an element of $\mathscr{U}(x)$, and $v:\R^+\times \T\rightarrow T^*M$ is a solution of (\ref{cr}) which is asymptotic to $y$ for $s\rightarrow +\infty$ and is such that 
\[
\pi\circ v(0,\cdot)=\phi_{G_{\mathbb{S}}}(\sigma,\pi\circ u(0,\cdot)).
\]
However, this choice would define a chain homotopy between $\Phi\Psi$ and a chain isomorphism on $F_*(H)$ which needs not be the identity. Indeed, the latter chain map would count the set of pairs $(u,v)$ where $u\in \mathscr{U}(x)$, $v:\R^+\times \T\rightarrow T^*M$ is a solution of (\ref{cr}) which is asymptotic to $y$ for $s\rightarrow +\infty$ and such that $\pi\circ v(0,\cdot) = \pi\circ u(0,\cdot)$. In the case $\mu_{CZ}(y)=\mu_{CZ}(x)$, nothing prevents the existence of non-stationary solutions for the latter problem, provided that $\mathbb{S}(y)< \mathbb{S}(x)$. Therefore, the above chain map would just be another isomorphism which preserves the action filtrations, and one would need a second chain homotopy to show that this isomorphism is homotopic to the identity operator on $F_*(H)$. This asymmetry finds its explanation in the next section and in Remark \ref{invchom} below, where we show how to correctly define a chain homotopy between $\Phi\Psi$ and the identity mapping on $F_*(H)$.
\end{rem}

\section{Interpretation of the chain maps $\Phi$ and $\Psi$} 
\label{heursec}

Let $H\in C^{\infty}(\T,T^*M)$ be a uniformly convex Hamiltonian which is quadratic at infinity and whose Hamiltonian vector field has only non-degenerate 1-periodic orbits. Let $L\in C^{\infty}(\T,TM)$ be its Legendre-dual Lagrangian. In order to explain the heuristic ideas which lie behind the choice of the spaces which define the chain maps $\Phi$ and $\Psi$, we start by showing that $\mathbb{A}$ can be seen as a continuously differentiable functional on a suitable Hilbert manifold, on which it admits a negative pseudo-gradient vector field with a well-defined Morse complex. Since our aim is to motivate the choice of the spaces $\mathscr{M}_{\Phi}$ and $\mathscr{M}_{\Psi}$, throughout this section we ignore orientations and we consider the periodic orbits, respectively the critical points, as the generators of the Floer complex, respectively the Morse complex.

\paragraph{A negative pseudo-gradient vector field for $\mathbb{A}$.} Let $\Lambda(T^*M)$ be the Hilbert vector bundle over the Hilbert manifold $H^1(\T,M)$ whose fiber at $q\in H^1(\T,M)$ is the Hilbert space of $L^2$-sections of the vector bundle $q^*(T^*M)$. The Hamiltonian action, which can be written as
\[
\mathbb{A}(q,p) = \int_{\T} \Bigl( \langle p, q' \rangle - H(t,q,p) \Bigr)\, dt,
\]
extends to a continuous G\^{a}teaux-differentiable functional on $\Lambda(T^*M)$. We claim that such a functional admits a Morse-Smale negative pseudo-gradient vector field $G_{\mathbb{A}}$, which has a well-defined Morse complex, naturally identified with the Morse complex of the Lagrangian action functional $\mathbb{S}$.

Indeed, let us consider the Hilbert vector bundle $\Lambda(TM)$ over $H^1(\T,M)$ whose fiber at $q\in H^1(\T,M)$ is the Hilbert space of $L^2$-sections of $q^*(TM)$. The Legendre transform induces the following fiber-preserving diffeomorphism 
\[
\mathcal{L} : \Lambda(TM) \rightarrow \Lambda(T^*M), \quad \mathcal{L}(q,v) := \bigl(q, d_v L(\cdot,q,q'+v)\bigr).
\]
By the Legendre duality formula (\ref{legtra}), the composition $\mathbb{A}\circ \mathcal{L}$ has the form
\begin{eqnarray*}
\mathbb{A}\bigl(\mathcal{L}(q,v)\bigr) &=& \int_{\T} \Bigl( \langle d_v L(t,q,q'+v), q' \rangle - H\bigl(t,q,d_v L(t,q,q'+v)\bigr) \Bigr) \, dt \\
&=& \int_{\T} \Bigl( \langle d_v L(t,q,q'+v), q' \rangle - \langle d_v L(t,q,q'+v), q'+v \rangle + L(t,q,q'+v) \Bigr) \, dt \\ &=& \int_{\T} \Bigl( L(t,q,q'+v) - \langle d_v L(t,q,q'+v),v \rangle \Bigr)\, dt.
\end{eqnarray*}
By the Taylor formula with integral remainder, we have
\[
L(t,q,q') = L(t,q,q'+v) + \langle d_v L(t,q,q'+v), -v \rangle + \int_0^1 s \, d_{vv} L(t,q,q'+sv)[v]^2\, ds,
\]
so we can rewrite $\mathbb{A}(\mathcal{L}(q,v))$ as
\[
\mathbb{A}\bigl(\mathcal{L}(q,v)\bigr) = \int_{\T} L(t,q,q')\, dt - \int_{\T} \int_0^1 s\, d_{vv} L(t,q,q'+sv)[v]^2\, ds \, dt.
\]
Therefore, $\mathbb{A}\circ \mathcal{L}$ has the form
\begin{equation}
\label{compo}
\mathbb{A}\bigl(\mathcal{L}(q,v)\bigr) = \mathbb{S}(q) - \mathbb{U}(q,v), 
\end{equation}
where $\mathbb{U}$ is the function
\[
\mathbb{U} : \Lambda(TM) \rightarrow \R, \qquad \mathbb{U}(q,v) :=  \int_{\T} \int_0^1 s\, d_{vv} L(t,q,q'+sv)[v]^2\, ds \, dt.
\]

\begin{rem}
When $H$ is the physical Hamiltonian (\ref{phys}), the function $\mathbb{U}$ takes the simple form
\[
\mathbb{U}(q,v) = \frac{1}{2} \int_{\T} g_q(v,v)\, dt.
\]
This fact, in the particular case $\alpha=0$ and $V=0$, is used by M.\ Lipyanskiy in \cite{lip09}.
\end{rem}

Since $d_{vv} L>0$, the function $\mathbb{U}$ is non-negative and it assumes its global minimum 0 on the zero section of $\Lambda(TM)$. Since the diffeomorphism $\mathcal{L}$ is fiber preserving, the identity (\ref{compo}) implies that the fiber-wise differential of $\mathbb{U}$ at $(q,v)\in  \Lambda(TM)$ vanishes if and only if the fiberwise-differential of $\mathbb{A}$ at $\mathcal{L}(q,v)$ vanishes. The latter fact happens if and only if
\[
q' = d_p H\bigl(t,q, d_v L(t,q,q'+v)\bigr) = q' + v,
\]
hence if and only if $v=0$. Therefore, the restriction of $\mathbb{U}$ to each fiber
$\Pi^{-1}(q)$, where $\Pi$ is the bundle projection on the basis
\[
\Pi: \Lambda(TM) \rightarrow H^1(\T,M),
\]
is a non-negative function with a unique minimum at $0$ and no other critical point. In particular, all the critical points of $\mathbb{A}\circ \mathcal{L}$ lie on the zero section of $\Lambda(TM)$.    

Let $G_{\mathbb{S}}$ be a smooth complete Morse-Smale negative pseudo-gradient vector field for $\mathbb{S}$ on $H^1(\T,M)$. 
By (\ref{compo}), the function $\mathbb{A} \circ \mathcal{L}$ on $\Lambda(TM)$ admits a smooth pseudo-gradient vector field $G_{\mathbb{A}\circ \mathcal{L}}$ which is tangent to the zero section, on which it coincides with $G_{\mathbb{S}}$, such that the flows $\phi_{G_{\mathbb{S}}}$ and $\phi_{G_{\mathbb{A}\circ \mathcal{L}}}$ of $G_{\mathbb{S}}$ and $G_{\mathbb{A}\circ \mathcal{L}}$ are related by
\[
\Pi \circ \phi_{G_{\mathbb{A}\circ \mathcal{L}}} (s,x) = \phi_{G_{\mathbb{S}}} (s, \Pi (x) ), \qquad \forall x\in \Lambda(TM),\; \forall s\in \R,
\]
and such that $\phi_{G_{\mathbb{A}\circ \mathcal{L}}} (s,x)$ converges to the zero section for $s\rightarrow -\infty$ and escapes at infinity for $s\rightarrow +\infty$. It follows that for every critical point $(q,0)$ of $\mathbb{A}\circ \mathcal{L}$ there holds
\begin{eqnarray}
\label{wu}
W^u\bigl( (q,0); G_{\mathbb{A}\circ \mathcal{L}} \bigr) &=& \Pi^{-1} \bigl( W^u(q;G_{\mathbb{S}}) \bigr), \\
\label{ws}
W^s\bigl( (q,0); G_{\mathbb{A}\circ \mathcal{L}} \bigr) &=& W^s(q;G_{\mathbb{S}})\subset \mathbb{O}_{\Lambda(TM)},
\end{eqnarray}
where $\mathbb{O}_{\Lambda(TM)}= H^1 ( \T, \mathbb{O}_{TM})$ denotes the zero section of $\Lambda(TM)$. The stable and unstable manifolds of the singular points of $G_{\mathbb{A}\circ \mathcal{L}}$ are infinite-dimensional, but (\ref{wu}) and (\ref{ws}) show that their pairwise intersections are transverse and finite dimensional, and that they define a Morse complex which is precisely the Morse complex of $\mathbb{S}$, which is determined by the pseudo-gradient vector field $G_{\mathbb{S}}$. The push-forward of the vector field $G_{\mathbb{A}\circ \mathcal{L}}$ by the diffeomorphism $\mathcal{L}$, that is the vector field 
\[
G_{\mathbb{A}} := \mathcal{L}_* \bigl( G_{\mathbb{A}\circ \mathcal{L}}\bigr),
\]
is a Morse-Smale negative pseudo-gradient vector field for $\mathbb{A}$ on $\Lambda(T^*M)$, and has a well-defined Morse complex, which is naturally identified with the Morse complex of $\mathbb{S}$, as claimed. Since the diffeomorphism $\mathcal{L}$ is fiber-preserving, 
(\ref{wu}) implies that the unstable manifold of a critical point $x$ of $\mathbb{A}$ on $\Lambda(T^*M)$ is the set
\begin{equation}
\label{wuu}
W^u(x; G_{\mathbb{A}}) = \mathcal{L} \bigl( W^u ( \mathcal{L}^{-1}(x); G_{\mathbb{A}\circ \mathcal{L}} ) \bigr) = \Pi^{-1} \bigl(W^u(\pi\circ x ;G_{\mathbb{S}}) \bigr), 
\end{equation}
where $\Pi:\Lambda(T^*M) \rightarrow H^1(\T,M)$ denotes the projection onto the basis, $\Pi(x)=\pi\circ x$. By the form of $\mathcal{L}$ and by (\ref{ws}), the stable manifold of $x\in \crit \,\mathbb{A}$ is the set
\begin{equation}
\label{wss}
\begin{split}
W^s(x; G_{\mathbb{A}})  &=  \mathcal{L} \bigl( W^s ( \mathcal{L}^{-1}(x); G_{\mathbb{A}\circ \mathcal{L}} ) \bigr) \\
&= 
\set{(q,p)\in \Lambda(T^*M)}{q\in W^s(\pi\circ x;G_{\mathbb{S}}), \; p = d_v L(\cdot,q,q')} \\ &= \set{(q,p)\in \Lambda(T^*M)}{q\in W^s(\pi\circ x;G_{\mathbb{S}}), \; q' = d_p H(\cdot,q,p)}.
\end{split}
\end{equation}

\begin{rem}
\label{thom}
By the form (\ref{compo}) of the functional $\mathbb{A}\circ \mathcal{L}$, the identification between the Morse complex of $\mathbb{S}$ and the Morse complex of $\mathbb{A}\circ \mathcal{L}$ should be seen as the Morse theoretical interpretation of the equivalent of the Thom isomorphism for the infinite dimensional vector bundle $\Lambda(TM)\rightarrow H^1(\T,M)$. The picture for $\mathbb{A}$ is the same, but the zero-section of $\Lambda(TM)$ is replaced by the section $q\mapsto d_v L(\cdot,q,q')$ of $\Lambda(T^*M)$. This fact has been observed in \cite{lip09}. See also \cite{kra07}.
\end{rem} 

\paragraph{The chain maps $\Phi$ and $\Psi$.} We recall that, when $G_1$ and $G_2$ are two negative pseudo-gradient vector fields for the the same functional on a Hilbert manifold, a chain map from the Morse complex induced by $G_1$ to the one induced by $G_2$ can be often defined by counting the intersections 
\[
W^u(x;G_1)\cap W^s(y;G_2)
\]
of the unstable manifold of the critical point $x$ with respect to the first flow with the stable manifold of the critical point $y$ with respect to the second flow. In our case, the Hamiltonian action functional $\mathbb{A}$ has the negative pseudo-gradient vector field $G_{\mathbb{A}}$, which was constructed above and has a well-defined Morse complex $M_*(\mathbb{A})\cong M_*(\mathbb{S})$, and the $L^2$-negative gradient equation 
\begin{equation}
\label{cr2}
\partial_s u + J(t,u) \bigl( \partial_t u - X_H(t,u) \bigr) = 0,
\end{equation}
which produces the Floer complex $F_*(H)$. Although the latter equation does not determine a flow, one may interpret the image of the evaluation map $u\mapsto u(0,\cdot)$ on the set 
\begin{equation}
\label{fs}
\set{u:\R^+\times \T\rightarrow T^*M}{u \mbox{ solves (\ref{cr2}) and } u(s,\cdot) \rightarrow y \mbox{ for } s\rightarrow +\infty}
\end{equation}
as the stable manifold of the critical point $y$. Analogously, the image of the evaluation map $u\mapsto u(0,\cdot)$ on the set 
\begin{equation}
\label{fu}
\set{u: \R^-\times \T\rightarrow T^*M}{u \mbox{ solves (\ref{cr2}) and } u(s,\cdot) \rightarrow x \mbox{ for } s\rightarrow -\infty}
\end{equation}
should be interpreted as the unstable manifold of $x$. By this interpretation and by the general procedure recalled above, the  chain map $\Phi$ from $M_*(\mathbb{A})\cong M_*(\mathbb{S})$ to $F_*(H)$ should be defined by the following counting process: The coefficient of $y$ in $\Phi x$ should be obtained by counting  the elements of the set 
\[
\set{u}{u \mbox{ belongs to (\ref{fs}) and } u(0,\cdot) \in W^u (x; G_{\mathbb{A}}) }.
\]
By (\ref{wuu}), this set is precisely the set $\mathscr{M}_{\Phi}(\pi\circ x,y)$ which we have used in the definition of the chain map $\Phi$.

Similarly,  the coefficient of $y$ in $\Psi x$ should be the algebraic count of the elements of the set
\[
\set{u}{u \mbox{ belongs to (\ref{fu}) and } u(0,\cdot) \in W^s (y; G_{\mathbb{A}}) }.
\]
By (\ref{wss}), this is precisely the set $\mathscr{M}_{\Psi}(x, \pi\circ y) =
\mathscr{Q}_x^{-1}(W^s(\pi\circ y;G_{\mathbb{S}}))$ that we have used in the definition of the chain map $\Psi$.

\begin{rem} 
\label{invchom}
The above arguments suggest the correct way to construct a chain homotopy between the composition $\Phi\Psi$ and the identity on $F_*(H)$ (see Remark \ref{invchom0}). One should consider, for each pair $x,y\in \mathscr{P}(H)$, the space of triplets $(\sigma,u,v)$, where $\sigma$ is a positive number, $u$ belongs to (\ref{fu}), $v$ belongs to (\ref{fs}), and the coupling condition
 \[
 v(0,\cdot) = \phi_{G_{\mathbb{A}}} (\sigma, u(0,\cdot))
 \]
 holds. This space is different from the one suggested in Remark \ref{invchom0}, for instance because here $u$ needs not belong to the reduced unstable manifold $\mathscr{U}(x)$.
 \end{rem}
 
\begin{rem}
The functional $\mathbb{A}$ on $\Lambda(T^*M)$ fits into the Morse theory for functional whose critical points have infinite Morse index which was developed by the first author and P.\ Majer in \cite{ama05}. In particular, the relevant subbundle of the tangent bundle of the domain $\Lambda(T^*M)$ is the vertical subbundle $\ker D\Pi$. The construction of the negative pseudo-gradient vector field $G_{\mathbb{A}}$ that we have described here uses the fiberwise uniform convexity assumption on $H$ in an essential way. It seems natural to ask whether one can use the techniques of \cite{ama05} in order to define a Morse complex for more general Hamiltonians (for instance, assuming only quadraticity at infinity, that is the assumption used to define the Floer complex). It turns out that this is indeed possible, but the right function space is the Hilbert bundle over $H^s(\T,M)$ of $H^{1-s}$-sections of $q^*(T^*M)$, for $1/2<s<1$, instead of the case $s=1$ considered above. In fact with this choice, the dominant term in the action functional is the integral of the Liouville form and the integral of the Hamiltonian behaves as a compact perturbation.
 \end{rem}
   
\renewcommand{\thesection}{\Alph{section}}
\setcounter{section}{0}
\section{Appendix: The Morse complex with a local system of coefficients}
\label{app}

In this appendix we review the notion of bundle of groups $\mathcal{G}$ (or  system of local coefficients) over a topological space and the definition of singular homology with local coefficients as defined by N.\ S.\ Steenrod in \cite{ste43}. We follow the presentation of \cite[Section 5.3]{mcc01}. Then we review the construction of the Morse complex for a gradient-like flow on a Hilbert manifold which is endowed with a system of local coefficients. More details about the Morse complex in such an infinite dimensional setting are contained in \cite{ama06m}, while the use of local coefficients in Morse homology is described in \cite[Section 7.2]{oan08}.

\paragraph{Bundles of groups.} Let $B$ be a topological space. A {\em bundle of groups $\mathcal{G}$ over $B$} is a collection of groups $\set{G_b}{b\in B}$ together with a collection of isomorphisms
\[
h[\alpha] : G_{b_1} \rightarrow G_{b_0}
\]
for every continuous path $\alpha:[0,1] \rightarrow B$ joining $\alpha(0)=b_0$ and $\alpha(1)=b_1$, such that the following conditions hold:
\begin{enumerate}
\item if $\alpha_0$ is the constant path at $b\in B$, then $h[\alpha_0] = \mathrm{id}: G_b \rightarrow G_b$;
\item if $\alpha$ and $\beta$ are homotopic with fixed end-points, then $h[\alpha]=h[\beta]$;
\item if $\alpha$ and $\beta$ are paths satisfying $\alpha(1)=\beta(0)$ and 
\[
\alpha * \beta (t) := \left\{ \begin{array}{ll} \alpha(2t) & \mbox{if } 0\leq t \leq 1/2, \\\beta(2t-1) & \mbox{if } 1/2 \leq t \leq 1, \end{array} \right.
\]
denotes their product path, then $h[\alpha * \beta] = h[\alpha] \circ h[\beta]$.
\end{enumerate}
One refers to $\mathcal{G}$ also as a {\em system of local coefficients on $B$}.

Assume that $B$ is path connected. We fix a point $b_* \in B$ and a path $\beta_b$ such that $\beta_b(0)=b$ and $\beta_b(1)=b_*$ for every $b\in B$. Then any representation
\[
\rho: \pi_1(B,b_*) \rightarrow \mathrm{Aut}(G)
\]
of the fundamental group of $B$ into the group of automorphisms of the group $G$ gives rise to a bundle of groups over $B$. Indeed, we define $G_b:=G$ for every $b\in B$ and for every path $\alpha:[0,1]\rightarrow B$ with $b_0:=\alpha(0)$ and $b_1:= \alpha(1)$ we set
\[
h[\alpha] := \rho [ \beta_{b_0}^{-1} * \alpha * \beta_{b_1} ].   
\]
Conversely, any bundle of groups gives rise to a representation
\[
\rho': \pi_1 (B,b_0) \rightarrow \mathrm{Aut}(G_{b_0})
\]
for every $b_0\in B$.

\paragraph{Singular homology with local coefficients.} Fix a bundle of abelian groups $\mathcal{G}$ over a topological space $B$. Let 
\[
\Delta^p := \set{\sum_{j=0}^p x_j e_j}{x_j \geq 0 \mbox{ for every $j$ and } \sum_{j=0}^p x_j = 1}
\]
denote the standard $p$-symplex in $\R^{p+1}$, which is endowed with the standard basis $e_0,e_1, \dots,e_p$. The group $C_p(B;\mathcal{G})$ of {\em singular $p$-chains with coefficients in the bundle $\mathcal{G}$} is defined as the free abelian group generated by the elements
\[
g\otimes \sigma, \quad \mbox{where } \sigma:\Delta^p \rightarrow B \mbox{ and } g\in G_{\sigma(e_0)}.
\]
The $j$-th face of the singular simplex $\sigma:\Delta^p \rightarrow B$, $0\leq j \leq p$, is defined as usual as
\[
\partial_j \sigma := \sigma \circ \epsilon_j \mbox{ where } \epsilon_j:\Delta^{p-1} \hookrightarrow \Delta^p, \; (x_0,\dots, x_{p-1}) \mapsto (x_0,\dots, x_{j-1}, 0, x_j, \dots, x_{p-1} ).
\]
Notice that $\partial_j (e_0) = e_0$ for every $j=1,\dots,p$, while
$\partial_0 \sigma(e_0)=e_1$. If $\sigma: \Delta^p \rightarrow B$ is a singular simplex, we denote by
\[
\alpha_{\sigma} : [0,1] \rightarrow B, \quad t \mapsto \sigma \bigl(t e_0 + (1-t) e_1 \bigr),
\]
the path joining $\sigma(e_1)$ to $\sigma(e_0)$. The boundary homomorphism
\[
\partial_p : C_p(B;\mathcal{G}) \rightarrow C_{p-1}(B;\mathcal{G})
\]
is defined on the set of generators as
\[
\partial_p (g\otimes \sigma) := h[ \alpha_{\sigma}] (g) \otimes \partial_0 \sigma + \sum_{j=1}^p (-1)^j g \otimes \partial_j \sigma,
\]
for every simplex $\sigma:\Delta^p \rightarrow B$ and $g\in G_{\sigma(e_0)}$. One easily shows that $\partial_p \circ \partial_{p+1}=0$, so $\{C_*(B;\mathcal{G}),\partial_*\}$ is a chain complex of abelian groups. Its homology
\[
H_*(B;\mathcal{G}) := H\bigl(  C_*(B;\mathcal{G}),\partial_* \bigr)
\]
is said to be the {\em homology of $B$ with local coefficients in $\mathcal{G}$}.

\paragraph{The Morse complex with local coefficients.} Let $\mathcal{M}$ be a manifold modeled on a Hilbert space and let $f$ be a continuously differentiable real function on $\mathcal{M}$ which is bounded from below. We assume that $f$ admits a nice negative pseudo-gradient vector field $X$, that is a smooth tangent vector field on $\mathcal{M}$ which satisfies the following properties:

\begin{enumerate}

\item The flow $(t,x) \mapsto \phi(t,x)$ of $X$ exists for all $t\geq 0$. 

\item The set of critical points $\mathrm{crit}(f)$ coincides with the set of points $x\in \mathcal{M}$ where $X(x)=0$; $df(p)[X(p)]<0$ for every $p\in \mathcal{M}\setminus \mathrm{crit}(f)$. 

\item $X$ is Morse, meaning that the spectrum of the Jacobian $\nabla X(x): T_x \mathcal{M} \rightarrow T_x \mathcal{M}$ of $X$ at each $x\in \mathrm{crit}(f)$ 
does not meet the imaginary axis $i\R$. 

\item Every critical point $x\in \mathrm{crit}(f)$ has finite Morse index $\mathrm{ind}(x)$, where the Morse index $\mathrm{ind}(x)$ is defined to be the dimension of  the $\nabla X(x)$-invariant subspace of $T_x \mathcal{M}$ which is associated to the part of the spectrum of $\nabla X(x)$ having positive real part is finite-dimensional. In particular, the unstable (resp.\ stable) manifold of $x$
\[
\begin{split}
W^u(x) &:= \set{p\in \mathcal{M}}{\phi(t,p) \rightarrow x \mbox{ for } t\rightarrow -\infty} \\ \bigl( \mbox{resp. } W^s(x) &:= \set{p\in \mathcal{M}}{\phi(t,p) \rightarrow x \mbox{ for } t\rightarrow +\infty} \bigr)
\end{split} \]
has dimension (resp.\ codimension) $\mathrm{ind}(x)$ (thanks to (ii) these are embedded submanifolds).

\item $X$ is Morse-Smale, meaning that for every $x,y\in \crit(f)$ the submanifolds $W^u(x)$ and $W^s(y)$ intersect transversally.

\item The pair $(X,f)$ satisfies the Palais-Smale condition: every sequence $(p_n) \subset \mathcal{M}$ such that $(f(p_n))$ is bounded and $(df(p_n)[X(p_n)])$ is infinitesimal has a converging subsequence.

\end{enumerate}

Given $k\in \N$ let $\mathrm{crit}_k(f)$ be the set of critical points of $f$ of Morse index $k$. 
Let 
\[
\mathcal{G}= \bigl(\{G_p\}_{p\in \mathcal{M}, \{h[\alpha]}\bigr) 
\]
be a bundle of abelian groups over $\mathcal{M}$. Let $M_k(f;\mathcal{G})$ be the abelian group which is obtained from group
\[ 
\bigoplus_{\substack{x\in \mathrm{crit}_k(f) \\ o_x \; \mathrm{orientation} \; \mathrm{of}\; W^u(x)}} o_x \otimes G_x
\] 
by taking the quotient by the identification
\[
g_x \otimes \widehat{o}_x = - g_x\otimes o_x, \qquad \forall x\in \mathrm{crit}_k(f), \; g_x \in G_x, \; o_x \mbox{ orientation of } W^u(x),
\]
where $\widehat{o}_x$ denotes the orientation of $W^u(x)$ which is opposite to $o_x$. A choice of an orientation for the unstable manifold of each $x\in \mathrm{crit}_k(f)$ defines an isomorphism between $M_k(f;\mathcal{G})$ and 
\[
\bigoplus_{x\in \mathrm{crit}_k(f)} G_x.
\] 
Let $x,y$ be critical points of $f$ with $\mathrm{ind}(x) - \mathrm{ind}(y)=1$. Then $W^u(x)\cap W^s(y)$ is a one-dimensional manifold and consists of finitely many curves joining $x$ to $y$: denote by
\[
\Gamma(x,y) \subset C^0([0,1],\mathcal{M})
\]
the finite set consisting of a choice of a parametrization for each of these curves (where $\gamma\in \Gamma(x,y)$ implies that $\gamma(0)=x$ and $\gamma(1)=y$). 

An orientation $o_x$ of $W^u(x)$ and a path $\gamma\in \Gamma(x,y)$ induce an orientation $\gamma_*(o_x)$ of $W^u(y)$. Indeed, since
\[
T_y \mathcal{M} = T_y W^u(y) \oplus T_y W^s(y),
\]
an orientation of $W^u(y)$ can be identified with a co-orientation of $W^s(y)$ (that is, an orientation of the normal bundle of $W^s(y)$ in $\mathcal{M}$) and $\gamma_*(o_x)$ is defined as the co-orientation of $W^s(y)$ which, together with the orientation $o_x$ of $W^u(x)$, determines the orientation of the intersection $W^u(x)\cap W^s(y)$ which at $\gamma$ agrees with the direction of the flow of $X$.

The homomorphism
\[
\partial_k : M_k(f;\mathcal{G}) \rightarrow M_{k-1}(f;\mathcal{G})
\]
is defined generator-wise as
\[
\partial_k ( o_x \otimes g_x )= \sum_{y\in \mathrm{crit}_{k-1}(f)} \sum_{\gamma\in \Gamma(x,y)} \gamma_*(o_x) \otimes h[\gamma](g_x).
\]
It satisfies $\partial_k \circ \partial_{k+1}=0$, so $\{ M_*(f;\mathcal{G}),\partial_*\}$ is a chain complex of abelian groups, which is called the {\em Morse complex of $(f,X)$ with local coefficients in $\mathcal{G}$}. The choice of a different pseudo-gradient vector field for $f$ produces an isomorphic chain complex. In particular, the homology
\[
HM_*(f;\mathcal{G}) := H_* \bigl( M_*(f;\mathcal{G}),\partial_* \bigr)
\]
does not depend on $X$ and is called the {\em Morse homology of $f$  with local coefficients in $\mathcal{G}$}.

This homology is isomorphic to the singular homology of $\mathcal{M}$ with coefficients in $\mathcal{G}$:
\[
HM_*(f;\mathcal{G}) \cong H_*(B;\mathcal{G}).
\]


\providecommand{\bysame}{\leavevmode\hbox to3em{\hrulefill}\thinspace}
\providecommand{\MR}{\relax\ifhmode\unskip\space\fi MR }
\providecommand{\MRhref}[2]{%
  \href{http://www.ams.org/mathscinet-getitem?mr=#1}{#2}
}
\providecommand{\href}[2]{#2}

\end{document}